\def\alg{{A}}
\def\np{\bigskip}
\def\op{{\rm op}}
\def\ddA{{\rm A}}
\def\Br{{\rm Br}}
\def\SBr{{\rm SBr}}
\def\BrM{{\rm BrM}}
\def\ddB{{\rm B}}
\def\ddC{{\rm C}}
\def\ddD{{\rm D}}
\def\ddI{{\rm I}}
\def\het{{\rm ht}}
\def\hE{{\hat E}}
\def\hb{{\hat b}}
\def\eps{{\epsilon}}
\def\alp{{\alpha}}
\def\nl{\smallskip\noindent}
\def\lijntje{\vrule height2.4pt depth-2pt width0.5in}
\def\vlijntje{\vrule height0.45in depth0.4pt width0.4pt}
\def\vlijn{\buildrel {\hbox to 0pt{\hss$\textstyle\circ$\hss}}\over\vlijntje}
\def\dlijntje{{\vrule height2pt depth-1.6pt
width0.5in}\llap{\vrule height4pt depth-3.6pt width0.5in}}
\def\vtriple#1\over#2\over#3{\mathrel{\mathop{\kern0pt #2}\limits_{\hbox
to 0pt{\hss$#1$\hss}}^{\hbox to 0pt{\hss$#3$\hss}}}}
\def\rvtriple#1\over#2\over#3{\mathrel{\mathop{\kern0pt #2}\limits_{\hbox
to 0pt{\hss$#3$\hss}}^{\hbox to 0pt{\hss$#1$\hss}}}}
\def\Dn{\vtriple{\scriptstyle1}\over\circ\over{}\kern-1pt\lijntje\kern-1pt
\vtriple{\scriptstyle{2}}\over\circ\over{}
\cdots\cdots\vtriple{\scriptstyle n-3}\over\circ\over{}\kern-1pt\lijntje\kern-1pt
\vtriple{\scriptstyle n-2}\over\circ\over{\buildrel
{\scriptstyle n-1}\over\vlijn}\kern-1pt\lijntje\kern-1pt
\vtriple{n}\over\circ\over{}\kern-1pt}
\def\An{\vtriple{\scriptstyle1}\over\circ\over{}\kern-1pt\lijntje\kern-1pt
\vtriple{\scriptstyle{2}}\over\circ\over{}\kern-1pt\lijntje\kern-1pt
\vtriple{\scriptstyle3}\over\circ\over{}
\cdots\cdots
\vtriple{\scriptstyle n-1}\over\circ\over{}\kern-1pt\lijntje\kern-1pt
\vtriple{\scriptstyle n}\over\circ\over{}\kern-1pt}
\def\Cn{\vtriple{\scriptstyle n-1}\over\circ\over{}
\kern-1pt\lijntje\kern-1pt\vtriple{\scriptstyle{n-2}}\over\circ\over{}
\cdots\cdots
\vtriple{\scriptstyle 2}\over\circ\over{}
\kern-1pt\lijntje\kern-1pt\vtriple{\scriptstyle 1}\over\circ\over{}
\kern-4pt{\dlijntje \kern -25pt<}\kern8pt
\vtriple{\scriptstyle 0}\over\circ\over{}\kern-1pt}
\newcommand{\cA}{\mathcal{A}}
\newcommand{\cB}{\mathcal{B}}
\newcommand{\N}{\mathbb N}
\newcommand{\R}{\mathbb R}
\newcommand{\Z}{\mathbb Z}
\newcommand{\fp}{\mathfrak{p}}
\numberwithin{equation}{section}
\newtheorem{lemma}{Lemma}[section]
\newtheorem{cor}[lemma]{Corollary}
\newtheorem{prop}[lemma]{Proposition}
\newtheorem{thm}[lemma]{Theorem}
\theoremstyle{definition}
\newtheorem{defn}[lemma]{Definition}
\theoremstyle{remark}
\newtheorem{rem}[lemma]{Remark}
\def\a{\alpha}
\def\b{\beta}
\def\alp{\alpha}
\def\isom{\cong}
\begin{document}
\title{Brauer algebras of type C}
\author{Arjeh M.~Cohen, Shoumin Liu, Shona Yu}
\maketitle

\begin{abstract}
For each $n\ge2$, we define an algebra satisfying many properties that one
might expect to hold for a Brauer algebra of type $\ddC_n$. The monomials
of this algebra correspond to scalar multiples of symmetric Brauer diagrams
on $2n$ strands. The algebra is shown to be free of rank the
number of such diagrams and cellular, in the sense of Graham and Lehrer.
\end{abstract}

\medskip\noindent
{\sc keywords:} associative algebra, Birman--Murakami--Wenzl algebra, BMW
algebra, Brauer algebra, cellular algebra, Coxeter group, Tem\-per\-ley--Lieb
algebra, root system, semisimple algebra, word problem in semigroups

\medskip\noindent
{\sc AMS 2000 Mathematics Subject Classification:}
16K20, 17Bxx, 20F05, 20F36, 20M05

\bigskip\noindent
{\sc Corresponding author:} {Arjeh M.~Cohen,
Department of Mathematics and Computer Science,
Eindhoven University of Technology, 
POBox 513, 
5600 MB Eindhoven,
The Netherlands, {email: \verb`A.M.Cohen@tue.nl`}

\section{Introduction}
It is well known that the Coxeter group of type $\ddC_n$ arises from the
Coxeter group of type $\ddA_{2n-1}$ as the subgroup of all elements fixed by
a Coxeter diagram automorphism.  Crisp \cite{Crisp1996} showed that the
Artin group of type $\ddC_n$ arises in a similar fashion from the Artin
group of type $\ddA_{2n-1}$. In this paper, we study the subalgebra of the
Brauer algebra $\Br(\ddA_{2n-1})$ of type $\ddA_{2n-1}$ (that is, the
classical Brauer algebra on $2n$ strands) spanned by Brauer diagrams that
are fixed by the symmetry corresponding to this diagram automorphism (see
Definition \ref{1.1}). Such diagrams will be called symmetric. First, in
Definition \ref{0.1}, we define the Brauer algebra of type $\ddC_n$,
notation
$\Br(\ddC_n)$, in terms
of generators
and relations depending solely on the Dynkin diagram below.
$$ \ddC_n\quad = \quad \Cn$$
The distinguished generators of $\Br(\ddC_n)$ are the involutions
$r_0,\ldots,r_{n-1}$ and the quasi-idempotents $e_0,\ldots,e_{n-1}$ (here, a
quasi-idempotent is an element that is an idempotent up to a scalar
multiple).  Each defining relation concerns at most two indices, say $i$ and
$j$, and is
 determined by the diagram induced by $\ddC_n$ on
$\{i,j\}$. The group algebra of the Coxeter group of type $\ddC_n$ is
obtained by taking
 the quotient of the Brauer algebra of type $\ddC_n$ by
the ideal generated
 by all quasi-idempotents $e_i$. It is isomorphic to
the subalgebra generated by all $r_i$.  The subalgebra generated by all
$e_i$ $(i=0,\ldots,n-1)$ is isomorphic to the Temperley--Lieb algebra
 of
type $\ddB_n$ defined by tom Dieck in~\cite{Dieck2003}.
 
 The main result
states that the algebra $\Br(\ddC_n)$ is isomorphic to the subalgebra
$\SBr(\ddA_{2n-1})$ of the Brauer algebra $\Br(\ddA_{2n-1})$ linearly
spanned by symmetric diagrams.  In order to distinguish them from those of
$\Br(\ddC_n)$, the canonical generators of the Brauer algebra of type
$\ddA_{2n-1}$ are denoted by $R_1,\ldots,R_{2n-1}$, $E_1,\ldots,E_{2n-1}$
instead of the usual lower case letters (see Definition \ref{1.1}).
Although our formal set-up is slightly more general, the algebras considered
are mostly
 defined over the integral group ring $\Z[\delta^{\pm1}]$.
 
\begin{thm}\label{thm:main}
There exists a $\Z[\delta^{\pm1}]$-algebra isomorphism
$$\phi : \Br(\ddC_n)\longrightarrow \SBr(\ddA_{2n-1})$$
determined by $\phi(r_0)=R_n$, $\phi(r_i)=R_{n-i}R_{n+i}$,
$\phi(e_0)=E_n$,  and $\phi(e_i)=E_{n-i}E_{n+i}$, for $0< i< n$.
In particular,
the algebra $\Br(\ddC_n)$ is free over $\Z[\delta^{\pm1}]$
of rank $a_{2n}$,
where $a_n$ is defined by $a_0=a_1=1$ and, for $n>1$, the recursion
$$a_{n} = a_{n-1} + 2(n-1)a_{n-2}.$$
\end{thm}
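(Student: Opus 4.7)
My plan is to verify that $\phi$ is a well-defined algebra homomorphism, that it is surjective onto $\SBr(\ddA_{2n-1})$, and that $\Br(\ddC_n)$ has $\Z[\delta^{\pm 1}]$-rank at most $a_{2n}$. Since $\SBr(\ddA_{2n-1})$ is a $\Z[\delta^{\pm 1}]$-submodule of the free module $\Br(\ddA_{2n-1})$ spanned by those basis elements that are symmetric Brauer diagrams on $2n$ strands, it is itself free, of rank equal to the number of such diagrams. A separate combinatorial argument---classifying a symmetric diagram by what the topmost endpoint is connected to---will show that this number satisfies $a_{2n}=a_{2n-1}+2(2n-1)a_{2n-2}$, matching the recursively defined $a_{2n}$. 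A surjection from a module of rank at most $a_{2n}$ onto a free module of rank $a_{2n}$ must then be an isomorphism, yielding both halves of the theorem at once.

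Well-definedness requires checking that the images $\phi(r_i),\phi(e_i)$ satisfy each defining relation of $\Br(\ddC_n)$ inside $\Br(\ddA_{2n-1})$. Because every such relation involves at most two indices, only the rank-two sub-diagrams of $\ddC_n$ need be considered. For a pair $\{i,j\}$ with $i,j>0$, the four generators $R_{n\pm i},R_{n\pm j}$ sit inside a type-$\ddA$ Brauer algebra and the identities follow from commutations (indices more than one apart commute) together with standard type-$\ddA$ braid and Brauer relations. The genuinely delicate case is the double bond at node $0$, where I must establish the $\ddC_2$-relations among $R_n,R_{n-1}R_{n+1},E_n,E_{n-1}E_{n+1}$; this is where the specific coefficients in the quasi-idempotent and mixed $r$-$e$ relations for type $\ddC$ enter, and I would carry out the calculation by expanding all products in the six generators $R_{n-1},R_n,R_{n+1},E_{n-1},E_n,E_{n+1}$.

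Surjectivity is then a soft step. The image of $\phi$ is visibly contained in $\SBr(\ddA_{2n-1})$ since every generator image is invariant under the diagram automorphism. Conversely, any symmetric monomial in the canonical generators of $\Br(\ddA_{2n-1})$ can be rewritten as a product of the central letters $R_n,E_n$ and the symmetric pairs $R_{n-i}R_{n+i},E_{n-i}E_{n+i}$, each of which lies in the image of $\phi$; hence $\phi$ surjects onto $\SBr(\ddA_{2n-1})$.

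The main obstacle is the rank upper bound on $\Br(\ddC_n)$. The plan is to design a rewriting procedure on words in the $r_i,e_i$, using only the $\ddC_n$-relations, that reduces each monomial to a canonical normal form indexed by a symmetric Brauer diagram on $2n$ strands; this will immediately produce at most $a_{2n}$ spanning monomials. Designing the normal form, proving termination of the rewriting, and confirming that distinct symmetric diagrams yield distinct normal forms is the technical heart of the proof. I expect to proceed by induction on $n$, coupled with a complexity measure (such as the number of horizontal strands or a suitable height function) on the underlying diagram, so as to peel off generators of index $n-1$ and reduce to the subalgebra of type $\ddC_{n-1}$ generated by $r_0,\ldots,r_{n-2},e_0,\ldots,e_{n-2}$, in the spirit of known treatments of Brauer algebras for classical and simply-laced types.
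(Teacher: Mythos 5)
Your overall architecture --- well-definedness of $\phi$, a count of symmetric diagrams via the recursion, surjectivity onto $\SBr(\ddA_{2n-1})$, and an upper bound of $a_{2n}$ on the number of spanning monomials of $\Br(\ddC_n)$, combined at the end by the observation that a module spanned by at most $a_{2n}$ elements surjecting onto a free module of rank $a_{2n}$ must map isomorphically --- is exactly the architecture of the paper, and the homomorphism check and the counting argument are carried out there essentially as you describe.

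The genuine gap is your treatment of surjectivity as ``a soft step.'' The assertion that every symmetric monomial of $\Br(\ddA_{2n-1})$ can be rewritten as a word in $R_n$, $E_n$, $R_{n-i}R_{n+i}$, $E_{n-i}E_{n+i}$ is precisely the statement to be proved, and it is not automatic: a diagram is symmetric as a set partition of the $4n$ dots, not as a word, and there is no a priori reason a $\sigma$-fixed element should admit a $\sigma$-symmetric factorization. Even in the group case this is a theorem (M\"uhlherr: the centralizer of $\sigma$ in $W(\ddA_{2n-1})$ is generated by the $\sigma$-fixed products of simple reflections), and the monoid case is harder still. The paper devotes all of Section \ref{sect:surj} to it: one first establishes a unique decomposition $a=\delta^k UVW$ for diagrams of $\BrM(\ddA_{2n-1})$ (Theorem \ref{th:AmDecomp}), uses the uniqueness to conclude that $\sigma(a)=a$ forces each of $U$, $V$, $W$ to be $\sigma$-fixed separately, and then shows each $\sigma$-fixed factor lies in the image of $\phi$ --- which requires a separate analysis of the $\sigma$-fixed part of the vertical-strand Coxeter group $K_i$ (Lemma \ref{lm:Ki}) and of symmetric Temperley--Lieb elements (Lemma \ref{lm:sigmaX}, itself an induction on chain lengths). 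Your proposal contains no mechanism for producing such a factorization. On the rank upper bound you are candid that the normal form is the technical heart; note only that the paper does not induct on $n$ by peeling off index-$(n-1)$ generators (that route is mentioned in a remark but suppressed), but instead classifies the $W(\ddC_n)$-orbits of admissible root sets $B_{i,p}$, computes their stabilizers $N_{i,p}=A_{i,p}\times L_i$, and shows that the set of words $\delta^k u\,b_{p,i,p'}\,v\,w^{\op}$ (with $u,w$ coset representatives and $v\in L_i$) is closed under multiplication by generators. Either route must be carried out in full before the final rank comparison can be invoked.
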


\noindent A closed formula for the rank of $\Br(\ddC_n)$ is
\begin{eqnarray}
\label{eq:anformula}
a_{2n}&=&\sum_{i =0}^n
    \left(\sum_{p+2q = i}
               \frac{n!}{p! q! (n-i)!}
    \right)^2
     \,2^{n-i}\, (n-i)!.
\end{eqnarray}
A table of $a_n$ for some small $n$ is provided below.

\np

\begin{center}
\begin{tabular}{c|c|c|c|c|c|c|c|c|c}
   $n$& 0 & 1 & 2 & 3 & 4& 5 & 6 & 7 & 8\\
  \hline
  $a_n$& 1 & 1 & 3 & 7 & 25 & 81 & 331 &1303 & 5937 \\
\end{tabular}
\end{center}

\np

The paper is structured as follows. In Section \ref{sect:defns}, we review
the definition of a Brauer monoid of any simply laced Coxeter type and
introduce the notion of a Brauer algebra of type $\ddC_n$, denoted
$\Br(\ddC_n)$.  Section \ref{sect:typeA} reviews facts about the classical
Brauer algebra, denoted $\Br(\ddA_n)$, and about admissible root sets as
presented in \cite{CFW2008}. These sets lead towards a normal form of
monomials closely related to cellularity. In Section
\ref{sect:typeB}, we derive elementary properties of $\Br(\ddC_n)$.  Next,
in Section \ref{sect:surj}, we prove that the image of $\phi$ is precisely
the symmetric diagram subalgebra $\SBr(\ddA_{2n-1})$ of $\Br(\ddA_{2n-1})$.
In Section \ref{sect:adm} we study symmetric diagrams, the related algebra
$\SBr(\ddA_{2n-1})$, and the action of the monoid of all monomials of
$\Br(\ddC_n)$ on certain orthogonal root sets and a normal form for monomials
in $\Br(\ddC_n)$.  With these results, we are able to prove Theorem
\ref{thm:main} in Section \ref{sect:phi}.  Finally, in Section
\ref{sect:cellular}, we establish cellularity (in the sense of \cite{GL1996}) of
the newly introduced Brauer algebras and derive some
further properties.

We finish this introduction by illustrating our results with the first
interesting case: $n=2$.  Consider the classical Brauer algebra
$\Br(\ddA_3)$. The corresponding Brauer diagrams consist of four nodes at the top and
four at the bottom together with a complete matching between these eight
nodes. See Figure \ref{fig:gens_brauerA} for interpretations of $R_i$ and
$E_i$ $(i=1,2,3)$.  A Brauer diagram is called \emph{symmetric} if the complete
matching is not altered by the reflection of the plane whose mirror is the
vertical central axis of the diagram.  Clearly, $e_1:=E_1E_3$, $e_0:=E_2$,
$r_1:=R_1R_3$, and $r_0:=R_2$ represent symmetric diagrams. Our main theorem
implies that the subalgebra of $\Br(\ddA_3)$ generated by these diagrams has
a presentation on these four generators by the relations given in Definition
\ref{0.1} for $n=2$, and moreover it coincides with $\SBr(\ddA_3)$, the linear
span of all symmetric diagrams.  In fact, it is free and spanned by the
following 25 monomials.
\begin{eqnarray*}
1, r_0, r_1, r_0r_1, r_1r_0, r_1r_0r_1, r_0r_1r_0r_1,
r_0r_1r_0, \\
\{1, r_1\}e_0\{1,r_1r_0r_1\}\{1,r_1\},\\
\{1, r_0,  e_0\} e_1 \{ 1,r_0,e_0\}.
\end{eqnarray*}
The first eight, given on the top line, span a subalgebra isomorphic to the group
algebra of the Weyl group of type $\ddC_2$. This is in accordance with the
construction of $\SBr(\ddA_3)$ and the fact that the Weyl group of type
$\ddC_2$ occurs in the Weyl group of type $\ddA_3$ as the subgroup of elements
fixed by a Coxeter diagram automorphism.  The two-sided ideal of $\SBr(\ddA_3)$
generated by $e_1$ is spanned by the $9$ monomials on the bottom line. Also, the
complement in the ideal generated by $e_0$ and $e_1$ of the ideal generated
by $e_1$ is spanned by the $8$ monomials on the middle line. This division of
the 25 spanning monomials into three parts along the above lines is
strongly related to the cellular structure of $\SBr(\ddA_3)$.  The
subalgebra of $\SBr(\ddA_3)$ generated by $e_0$ and $e_1$ has dimension 6
and is isomorphic to the Temperley--Lieb algebra of type $\ddB_2$ introduced
by tom Dieck \cite{Dieck2003}.  For these (and other) reasons, we name
$\SBr(\ddA_3)$ \emph{the Brauer algebra of type} $\ddC_2$. Remarkably, the
Temperley--Lieb algebra of type $\ddB_2$ defined by Graham \cite{Gra} is
7-dimensional and tom Dieck's version is a quotient algebra thereof, but we
have not found a natural extension of Graham's algebra to an object
deserving the name \emph{Brauer algebra of type}~$\ddC_2$.

\section{Definitions} \label{sect:defns}
In this section, we give precise definitions of the algebras and the
homomorphism $\phi$ appearing in Theorem \ref{thm:main}.
All rings and algebras given are unital and associative.

\begin{defn}\label{0.1}
Let $R$ be a commutative ring with invertible element
$\delta$.  For $n\in \N$, the \emph{Brauer algebra of type $\ddC_n$ over $R$
with loop parameter $\delta$}, denoted by $\Br(\ddC_n,R,\delta)$, is the
$R$-algebra generated by $r_0$, $r_1,\dots, r_{n-1}$ and
$e_{0}$, $e_1,\dots, e_{n-1}$ subject to the following relations.
\begin{eqnarray}
r_{i}^{2}&=&1 \qquad \qquad\,\,\,\kern.02em \mbox{for}\,\mbox{any} \ i   \label{0.1.3}
\\
r_ie_i &= & e_ir_i \,=\, e_i \,\,\,\,\,\,\kern.05em \mbox{for}\,\mbox{any}\ i  \label{0.1.4}
\\
e_{i}^{2}&=&\delta^2 e_{i} \qquad \quad\,\,\kern.02em \mbox{for}\ i> 0    \label{0.1.5}
\\
e_{0}^{2}&=&\delta e_{0}             \label{0.1.6}
\\
r_ir_j&=&r_jr_i, \qquad \quad \mbox{for}\ i\nsim j   \label{0.1.7}
\\
e_ir_j&=&r_je_i, \qquad  \quad \kern-.03em \mbox{for}\ i\nsim j     \label{0.1.8}
\\
e_ie_j&=&e_je_i, \qquad \quad \kern-.06em \mbox{for}\ i\nsim j      \label{0.1.9}
\\
r_ir_jr_i&=&r_jr_ir_j, \qquad\,\kern-.04em \mbox{for}\ {i\sim j}\, \mbox{with}\ i, j > 0   \label{0.1.10}
\\
r_jr_ie_j&=&e_ie_j , \quad \qquad \kern-.11em \mbox{for}\ i\sim j\ \mbox{with}\ i,j> 0               \label{0.1.13}
\\
r_ie_jr_i&=&r_je_ir_j , \quad \quad \kern.06em \mbox{for}\ i\sim j\ \mbox{with}\ i, j> 0          \label{0.1.15}
\\
r_{1}r_0r_{1}r_{0}&=&r_0r_{1}r_0r_{1}                                       \label{0.1.11}
 \\
r_{1}r_0e_{1}&=&r_0e_{1}                                \label{0.1.14}
\\
  r_{1}e_0r_{1}e_0&=&e_0e_{1}e_0                                                         \label{0.1.19}
\\
(r_{1}r_0r_{1})e_0&=&e_0(r_{1}r_0r_{1})                                                            \label{0.1.20}
\\
e_{1}r_{0}e_{1}&=&\delta e_{1}                                                \label{0.1.12}
\\
e_{1}e_0e_{1}&=&\delta e_{1}                                                    \label{0.1.16}
\\
e_{1}r_0 r_{1}&=&e_{1}r_{0}                                                          \label{0.1.17}
\\
 e_{1}e_0r_{1}&=&e_{1}e_0                                                            \label{0.1.18}
\end{eqnarray}
Here $i\sim j$ means that $i$ and $j$ are
adjacent in the Dynkin diagram $\ddC_n$. If $R = \Z[\delta^{\pm1}]$ we write
$\Br(\ddC_n)$ instead of $\Br(\ddC_n,R,\delta)$ and speak of {\em the
Brauer algebra of type $\ddC_n$}.  The submonoid of the multiplicative
monoid of $\Br(\ddC_n)$ generated by $\delta$, $\delta^{-1}$,
$\{r_i\}_{i=0}^{n-1}$, and $\{e_i\}_{i=0}^{n-1}$ is denoted by
$\BrM(\ddC_n)$. It is the monoid of monomials in $\Br(\ddC_n)$ and will be
called \emph{the Brauer monoid of type}~$\ddC_n$.
\end{defn}

Observe that, for a distinguished invertible element $\delta$, the ring $R$ can
be viewed as a $\Z[\delta^{\pm1}]$-algebra and that $\Br(\ddC_n,R,\delta)
\isom \Br(\ddC_n)\otimes_{\Z[\delta^{\pm1}]} R$.  As a direct consequence
of the above definition, the submonoid of $\BrM(\ddC_n)$ generated by
$\{r_i\mid i=0,\ldots,n-1\}$ is isomorphic to the Weyl group $W(\ddC_n)$ of
type $\ddC_n$.

Let us recall from \cite{CFW2008} the definition of a
Brauer algebra of simply laced Coxeter type $Q$.
In order to avoid confusion
with the above generators, the symbols of \cite{CFW2008} have been capitalized.

\begin{defn}\label{1.1}
Let $R$ be a commutative ring with invertible element $\delta$ and $Q$
be a simply laced Coxeter graph. The \emph{Brauer algebra of type $Q$ over
$R$ with loop parameter $\delta$}, denoted $\Br(Q,R,\delta)$, is the $R$-algebra generated by $R_i$ and $E_i$, for each node $i$ of $Q$
subject to the following relations, where $\sim$ denotes
adjacency between nodes of $Q$.
\begin{eqnarray}
R_{i}^{2}&=&1          \label{1.1.2} \\
E_{i}^{2}&=&\delta E_{i}   \label{1.1.4} \\
R_iE_i&=&=E_iR_i \,=\, E_i     \label{1.1.3} \\
R_iR_j&=&R_jR_i, \,\, \mbox{for}\, \it{i\nsim j} \label{1.1.5} \\
E_iR_j&=&R_jE_i,\,\, \mbox{for}\, \it{i\nsim j}  \label{1.1.6} \\
E_iE_j&=&E_jE_i,\,\, \mbox{for}\, \it{i\nsim j}    \label{1.1.7} \\
R_iR_jR_i&=&R_jR_iR_j, \,\, \mbox{for}\, \it{i\sim j}  \label{1.1.8} \\
R_jR_iE_j&=&E_iE_j ,\,\, \mbox{for}\, \it{i\sim j}       \label{1.1.9} \\
R_iE_jR_i&=&R_jE_iR_j ,\,\, \mbox{for}\, \it{i\sim j}     \label{1.1.10}
\end{eqnarray}
As before, we call $\Br(Q) := \Br(Q,\Z[\delta^{\pm1}],\delta)$
\emph{the Brauer algebra of type $Q$}
and denote by $\BrM(Q)$
the submonoid of the multiplicative monoid of $\Br(Q)$ generated by $\delta^{-\pm 1}$ and 
all $R_i$ and $E_i$.
 \end{defn}

For any $Q$, the algebra $\Br(Q)$ is free over $\Z[\delta^{\pm 1}]$. Also, the classical Brauer algebra on $m+1$ strands is obtained when $Q=\ddA_m$.

\begin{rem}
As a consequence of the above relations, it is straightforward to show that
the following relations hold in $\Br(Q)$ for all nodes $i$, $j$, $k$ with
$i\sim j \sim k$ and $i\not\sim k$ (see \cite[Lemma 3.1]{CFW2008}).
\begin{eqnarray}
E_iR_jR_j&=&E_iE_j \label{3.1.1} \\ R_jE_iE_j &=& R_i E_j \label{3.1.2} \\
E_iR_jE_i &=& E_i \label{3.1.3} \\ E_jE_iR_j &=& E_j R_i \label{3.1.4} \\
E_iE_jE_i &=& E_i \label{3.1.5} \\ E_jE_iR_k E_j &=& E_jR_iE_k E_j
\label{3.1.6} \\ E_jR_iR_k E_j &=& E_jE_iE_k E_j \label{3.1.7}
\end{eqnarray}
\end{rem}

\begin{rem}
In \cite{Brauer1937}, Brauer gives a diagrammatic description for a basis of
the Brauer algebra of type $\ddA_m$. Each basis element is a diagram with
$2m+2$ dots and $m+1$ strands, where each dot is connected by a unique
strand to another dot.  Here we suppose the $2m+2$ dots have coordinates
$(i, 0)$ and $(i,1)$ in $\R^{2}$ with $1\leq i\leq m+1$. The multiplication
of two diagrams is given by concatenation, where any closed loops formed are
replaced by a factor of $\delta$.  The generators $R_i$ and $E_i$ of $\Br(\ddA_m)$ correspond to the diagrams indicated in Figure \ref{fig:gens_brauerA}.
\end{rem}

\begin{figure}[h!]
\begin{multicols}{2}
  \begin{center}
\hspace*{1cm}
 \includegraphics[width=4.5cm]{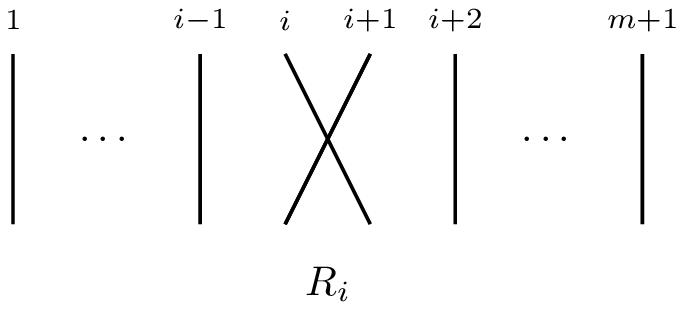} 
      \end{center}
\columnbreak   
  \begin{center}
\hspace*{-1.2cm}
\includegraphics[width=4.5cm]{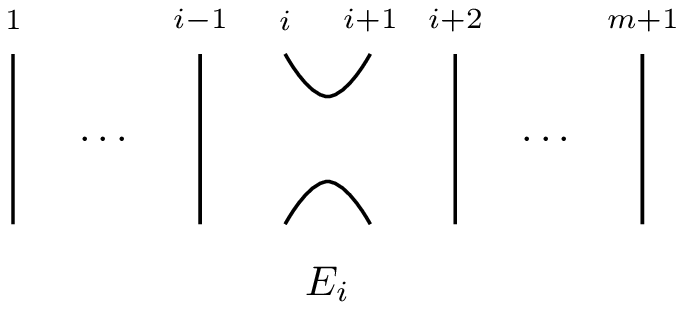}
  \end{center}
\end{multicols}
\caption{Brauer diagrams corresponding to $R_i$ and $E_i$.}
\label{fig:gens_brauerA}
\end{figure}

Each Brauer diagram can be written as a product of elements from
$\{R_i,E_i\}_{i=1}^m$. This statement is illustrated in Figure
\ref{fig:BrauerDiag}.

\begin{figure}[h!]
\begin{center}
   \includegraphics[scale=0.55]{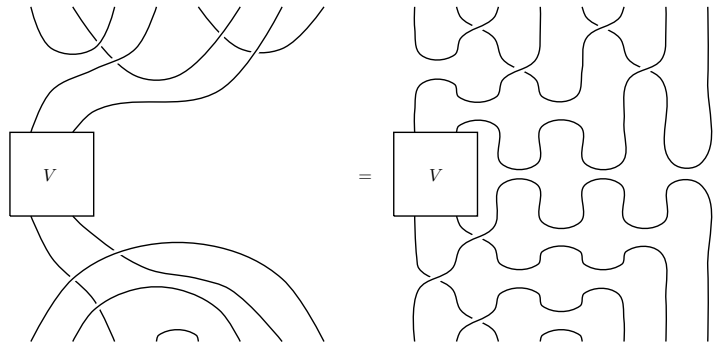}
\vspace{-9cm}
\caption{A Brauer diagram and a visualization of it as the product
$R_2R_5E_1R_3R_6E_2E_4VE_3E_5E_7R_2E_4E_6R_1E_3E_5R_2E_4$, where $V$ can be
  either the identity or the simple crossing $R_1$.}
\label{fig:BrauerDiag}
\end{center}
\end{figure}

Henceforth, we identify $\BrM(\ddA_{m})$ with its diagrammatic version. It
makes clear that $\Br(\ddA_m)$ is a free algebra over $\Z[\delta^{\pm1}]$ of
rank $(m+1)!!$, the product of the first $m+1$ odd integers. The monomials
of $\BrM(\ddA_m)$ that correspond to diagrams
will be referred to as diagrams.

The map $\sigma$ on the graph (or
Coxeter type) $\ddA_{m}$ given by $\sigma(i) = m+1-i$ is the single
nontrivial automorphism of this graph. As the presentation of $\Br(Q)$
merely depends on the graph $Q$, the map $\sigma$ induces an automorphism of
$\Br(\ddA_m)$, which will also be denoted by $\sigma$.  This involutory
automorphism is determined by its behaviour on the generators:
\[\sigma(R_{i})=R_{m+1-i},\,\,\,\,\sigma(E_{i})=E_{m+1-i}.\]

The automorphism $\sigma$ may be viewed simply as a reflection of the corresponding diagram about its central vertical axis.

\begin{defn}
Suppose $D_1$ and $D_2$ are diagrams in $\Br(\ddA_{m})$.
The diagram $D_1$ is \emph{symmetric} to the diagram $D_2$ if $D_2$ is the
diagram obtained by taking the reflection of $D_1$ about its central
vertical axis. If $D_1=D_2$, then we say $D_1$ is a symmetric diagram.
\end{defn}

\noindent Hence a diagram (that is, a monomial in $R_i$ and $E_i$) of
$\Br(\ddA_{m})$ is $\sigma$-~invariant if and only if it is symmetric about its
central vertical axis.
A monomial in $\BrM(\ddA_{m})$ is fixed by $\sigma$ if and only if it
represents a symmetric diagram.

\begin{lemma}
\label{3.4}
Let $m=2n-1$, for some $n \in \N$.  The number of symmetric diagrams (with
respect to $\sigma$) in $\BrM(\ddA_m)$ is equal to $a_{2n}$, where $a_n$
satisfies $a_0=a_1=1$ and the recursion
$$a_{n} = a_{n-1} + 2(n-1)a_{n-2}.$$
\end{lemma}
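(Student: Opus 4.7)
The plan is to reduce to a straightforward recursion. Since $m=2n-1$ is odd, a Brauer diagram in $\BrM(\ddA_{2n-1})$ has $4n$ dots, and the reflection $\sigma$ acts on them as the fixed-point-free involution $(i,j)\mapsto(2n+1-i,j)$ (no dot is fixed, since $2n+1-i=i$ has no integer solution). The $4n$ dots therefore split into $2n$ orbits of size two, and a symmetric diagram is precisely a $\sigma$-invariant perfect matching on this $4n$-point set.

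I will prove the more general statement that, for any free involution $\sigma$ acting with $N$ orbits on a $2N$-point set, the number $s_N$ of $\sigma$-invariant perfect matchings satisfies $s_0=s_1=1$ and the recursion $s_N=s_{N-1}+2(N-1)s_{N-2}$; taking $N=2n$ then gives the lemma. The base cases are immediate (the empty matching, respectively the unique pairing of a single orbit). For the recursion, I would fix a distinguished point $x$ and split the count according to the partner $M(x)$ of $x$ in a $\sigma$-invariant matching $M$. If $M(x)=\sigma(x)$, then the $\sigma$-invariant pair $\{x,\sigma(x)\}$ can be deleted, leaving a $\sigma$-invariant matching on the remaining $2(N-1)$ points and contributing $s_{N-1}$. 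Otherwise $M(x)=y$ for some $y\notin\{x,\sigma(x)\}$, and $\sigma$-invariance forces $M(\sigma(x))=\sigma(y)$; removing the two pairs $\{x,y\}$ and $\{\sigma(x),\sigma(y)\}$ leaves a $\sigma$-invariant matching on $2(N-2)$ points. Since $y$ ranges over $2N-2$ admissible partners, this case contributes $2(N-1)s_{N-2}$.

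I do not foresee any serious obstacle. The only routine point to verify, in the second case, is that the two removed pairs are disjoint (which is exactly the condition $y\notin\{x,\sigma(x)\}$) and that the remaining $2(N-2)$ points still form a union of full $\sigma$-orbits (which holds because the two deleted pairs together comprise exactly the orbits of $x$ and $y$, so $\sigma$ still acts freely on the complement). The argument is a $\sigma$-equivariant adaptation of the standard ``pick a point and condition on its partner'' recursion used to count ordinary Brauer diagrams.
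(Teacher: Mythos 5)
Your proposal is correct and follows essentially the same route as the paper: both reformulate a symmetric diagram as a perfect matching on the $4n$ dots invariant under the free involution induced by $\sigma$, and both derive the recursion by fixing a point and conditioning on whether its partner is its $\sigma$-image (contributing $a_{N-1}$) or one of the other $2N-2$ points (contributing $2(N-1)a_{N-2}$). The only cosmetic difference is that the paper phrases the invariant-matching count via two sets $X$, $Y$ swapped by $\tau$, while you phrase it via $\sigma$-orbits; the argument is identical.
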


\begin{proof} Fix two sets $X$ and $Y$, say, of size $n$ and a
permutation $\tau $ of $X\cup Y$ of order 2 interchanging $X$ and $Y$.  We
define $a_n$ as the number of perfect matchings on $X\cup Y$ that are
$\tau$-invariant (that is, if $\{a,b\}\subseteq X\cup Y$ belongs to the
matching, so does $\{\tau(a),\tau(b)\}$). Identifying $X$ with the set of
dots left of the vertical axis of symmetry, $Y$ with the set of dots to the
right, and $\tau$ with the permutation induced by $\sigma$, we see that
that $a_{2n}$ is the number of symmetric diagrams in $\BrM(\ddA_m)$.

It is obvious that $a_0=a_1=1$.  Fix $a\in X$.  The number of perfect
$\tau$-invariant matchings containing $\{a,\tau(a)\}$ is equal to $a_{n-1}$.

Suppose that we have a perfect $\tau$-invariant matching of $X\cup Y$
containing $\{a,b\}$ with $b\ne\tau(a)$.  Then $\{\tau(a),\tau(b)\}$ is a
second pair belonging to the matching. The matching induces $a_{n-2}$ number of perfect
$\tau$-invariant matchings on $(X\cup Y)\setminus \{a,b,\tau(a),\tau(b)\}$.  As there are $2n-2$ choices of $b$, we find
$a_{n}=a_{n-1}+(2n-2)a_{n-2}$.
\end{proof}

\begin{cor}\label{cor:3.4}
The linear span $\SBr(\ddA_{2n-1})$ of symmetric diagrams
is a $\Z[\delta^{\pm1}]$-subalgebra of
$\Br(\ddA_{2n-1})$. It is free over $\Z[\delta^{\pm1}]$ of rank $a_{2n}$.
\end{cor}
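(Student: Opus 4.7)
The plan is to deduce both assertions directly from the diagrammatic interpretation of $\Br(\ddA_{2n-1})$ together with the $\sigma$-invariance characterization of symmetric diagrams, and then to read off the rank from Lemma \ref{3.4}.

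For the subalgebra assertion, I would start from the observation already made above: a diagram $D\in\BrM(\ddA_{2n-1})$ is symmetric if and only if $\sigma(D)=D$. The key structural fact about the concatenation product of Brauer diagrams is that, for any two diagrams $D_1,D_2$, their product in $\Br(\ddA_{2n-1})$ equals $\delta^{k}D_3$ for some integer $k\ge 0$ and some diagram $D_3\in\BrM(\ddA_{2n-1})$. Assuming $D_1$ and $D_2$ are both symmetric, applying the algebra automorphism $\sigma$ gives
\[
\sigma(D_1 D_2)=\sigma(D_1)\sigma(D_2)=D_1D_2=\delta^k D_3,
\]
so $\delta^k\sigma(D_3)=\delta^k D_3$. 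Since the diagrams form a $\Z[\delta^{\pm1}]$-basis of $\Br(\ddA_{2n-1})$ and $\delta$ is invertible, we conclude $\sigma(D_3)=D_3$, that is, $D_3$ is itself symmetric. Hence $D_1D_2\in \SBr(\ddA_{2n-1})$, proving closure under multiplication; the identity diagram is clearly symmetric, so $\SBr(\ddA_{2n-1})$ is indeed a unital $\Z[\delta^{\pm1}]$-subalgebra.

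For freeness, I would use that $\BrM(\ddA_{2n-1})$ (identified with the set of Brauer diagrams) is a free $\Z[\delta^{\pm1}]$-basis of $\Br(\ddA_{2n-1})$. The subset of symmetric diagrams is therefore $\Z[\delta^{\pm1}]$-linearly independent, and by the very definition of $\SBr(\ddA_{2n-1})$ it spans this submodule. The cardinality of this subset has been computed in Lemma \ref{3.4} to be $a_{2n}$, which yields the rank.

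The only point requiring a little care is the step showing $D_3$ symmetric from $\delta^kD_3=\delta^k\sigma(D_3)$; this I expect to be the main (though minor) obstacle and is handled by invoking the freeness of $\Br(\ddA_{2n-1})$ together with invertibility of $\delta$, so that the equality of basis expansions forces $D_3=\sigma(D_3)$. Everything else is a direct consequence of the basis property and Lemma \ref{3.4}.
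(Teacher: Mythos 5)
Your proof is correct and is essentially the argument the paper intends (the corollary is stated without proof, as an immediate consequence of Lemma \ref{3.4} and the facts that $\sigma$ is an algebra automorphism fixing $\delta$ and permuting the diagram basis, so that the product of two $\sigma$-fixed diagrams is $\delta^k$ times a $\sigma$-fixed diagram). Your care about deducing $\sigma(D_3)=D_3$ from $\delta^k\sigma(D_3)=\delta^kD_3$ via freeness and invertibility of $\delta$ is exactly the right justification, and the rank statement follows from Lemma \ref{3.4} as you say.
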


Observe that $R_n$, $R_iR_{2n-i}$, $E_n$, and $E_iE_{2n-i}$ are
fixed under $\sigma$ for all $i\in\{1,\ldots,n\}$. Thus the image of the map
$\phi$ of Lemma \ref{lm:phiIsHomo} below lies in $\SBr(\ddA_{2n-1})$.

\begin{lemma}
\label{lm:phiIsHomo}
The following map determines a $\Z[\delta^{\pm 1}]$--algebra homomorphism
$\phi:\Br(\ddC_n)\to\SBr(\ddA_{2n-1})$.
\begin{center}
 $\phi(r_0)=R_n$, \, $\phi(r_i)=R_{n-i}R_{n+i}$, \\
$\phi(e_0)=E_n$\, and \,$\phi(e_i)=E_{n-i}E_{n+i}$, for $0< i< n$.
\end{center}
\end{lemma}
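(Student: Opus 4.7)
The plan is to verify that each defining relation of $\Br(\ddC_n)$ from Definition \ref{0.1} is satisfied once we substitute $r_i\mapsto\phi(r_i)$ and $e_i\mapsto\phi(e_i)$. Each image generator is manifestly $\sigma$-invariant, so lies in $\SBr(\ddA_{2n-1})$; once every defining relation is verified in $\Br(\ddA_{2n-1})$, the universal property of the presentation of $\Br(\ddC_n)$ yields the desired $\Z[\delta^{\pm1}]$-algebra homomorphism. I would organise the verifications into three groups according to which indices are involved.

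First, the \emph{split} relations: those whose indices all satisfy $i\ge 1$, namely \eqref{0.1.3}--\eqref{0.1.5} with $i\ge 1$ together with the instances of \eqref{0.1.7}--\eqref{0.1.10}, \eqref{0.1.13}, \eqref{0.1.15} with $i,j\ge 1$. Here the two factors $R_{n-i}$ and $R_{n+i}$ of $\phi(r_i)$ (similarly for $\phi(e_i)$) belong to the disjoint sub-diagrams on $\{1,\dots,n-1\}$ and $\{n+1,\dots,2n-1\}$, and any generator from the left half commutes with any generator from the right half, as opposite-half nodes are at distance at least $2$ in $\ddA_{2n-1}$. Both sides of each such relation thus factorise as a left-half word times a right-half word, and each factor is an instance of a Definition \ref{1.1} relation or one of its consequences \eqref{3.1.1}--\eqref{3.1.7} in the relevant $\ddA_{n-1}$ subalgebra. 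Second, the \emph{commuting} relations \eqref{0.1.6} and the $i=0$, $j\ge 2$ instances of \eqref{0.1.7}--\eqref{0.1.9}: these are immediate since $R_n$ and $E_n$ are at distance at least $2$ from $R_{n\pm j}$ and $E_{n\pm j}$ when $j\ge 2$, while \eqref{0.1.6} is literally \eqref{1.1.4} at node $n$.

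The substantive part is the \emph{core} relations \eqref{0.1.11}, \eqref{0.1.12}, and \eqref{0.1.14}--\eqref{0.1.20}, which involve only $r_0, r_1, e_0, e_1$. Their images lie in the Brauer algebra attached to the $\ddA_3$ sub-diagram on $\{n-1, n, n+1\}$, generated by $R_{n-1}, R_n, R_{n+1}$ and $E_{n-1}, E_n, E_{n+1}$. Each of the eight identities is to be verified directly from \eqref{1.1.2}--\eqref{1.1.10} and the derived relations \eqref{3.1.1}--\eqref{3.1.7}, using the commutations $R_{n-1}R_{n+1}=R_{n+1}R_{n-1}$ and $E_{n-1}E_{n+1}=E_{n+1}E_{n-1}$ together with the braid/mixed relations at the adjacent pairs $(n-1,n)$ and $(n,n+1)$. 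Illustratively, \eqref{0.1.11} becomes the $\ddA_3$-identity $(R_{n-1}R_{n+1})R_n(R_{n-1}R_{n+1})R_n = R_n(R_{n-1}R_{n+1})R_n(R_{n-1}R_{n+1})$, obtained by two applications of the braid relation \eqref{1.1.8}, and \eqref{0.1.16} becomes $E_{n-1}E_{n+1}E_nE_{n-1}E_{n+1}=\delta E_{n-1}E_{n+1}$, obtained by commuting $E_{n+1}$ past $E_{n-1}$ in the middle and applying \eqref{3.1.5}.

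The main obstacle is the bookkeeping in this third group: eight short but delicate calculations must be carried out, and one must keep careful track of which generators commute -- in particular, $R_n$ does \emph{not} commute with $R_{n\pm 1}$. Conceptually nothing is deep, because all the content is already captured by $\Br(\ddA_3)$ and its consequences.
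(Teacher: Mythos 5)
Your proposal is correct and follows essentially the same route as the paper: both check that the images of the generators satisfy every defining relation of Definition \ref{0.1} inside $\Br(\ddA_{2n-1})$ (using the relations of Definition \ref{1.1} and their consequences (\ref{3.1.1})--(\ref{3.1.7})), note that the images are $\sigma$-invariant so land in $\SBr(\ddA_{2n-1})$, and carry out only a few representative computations explicitly. Your grouping into split/commuting/core relations is a tidy way to organise the bookkeeping the paper leaves as an exercise, but it is not a different argument.
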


\begin{proof}
It suffices to verify that $\phi$ preserves the defining relations given in Definition
\ref{0.1}.
We demonstrate this for some of the relations (\ref{0.1.3})--(\ref{0.1.18}),
and leave the
rest as an exercise for the reader.

\noindent For (\ref{0.1.19}):
\begin{eqnarray*}
\phi(r_1)\phi(e_0)\phi(r_1)\phi(e_0)
&=&R_{n-1}(R_{n+1}E_n R_{n+1})R_{n-1}E_n\\
&\overset{(\ref{1.1.10})}{=}&R_{n-1}R_{n}E_{n+1}(R_{n}R_{n-1}E_{n})\\
&\overset{(\ref{1.1.9})}{=}&R_{n-1}R_{n}E_{n+1}E_{n-1}E_{n}\\
&\overset{(\ref{1.1.7})+(\ref{1.1.9})}{=}&E_nE_{n-1}E_{n+1}E_{n}\\
&=&\phi(e_0)\phi(e_1)\phi(e_0).
\end{eqnarray*}

\noindent For (\ref{0.1.20}):
\begin{eqnarray*}
\phi(r_1)\phi(r_0)\phi(r_1)\phi(e_0)
&=&R_{n+1}(R_{n-1}R_{n}R_{n-1})R_{n+1}E_{n}\\
&\overset{(\ref{1.1.8})}{=}&R_{n+1}R_nR_{n-1}(R_{n}R_{n+1}E_{n})\\
&\overset{(\ref{1.1.9})+(\ref{3.1.1})}{=}&R_{n+1}R_nR_{n-1}E_{n+1}R_n R_{n+1}\\
&\overset{(\ref{1.1.6})}{=}&R_{n+1} R_{n}E_{n+1}R_{n-1}R_n R_{n+1}\\
&\overset{(\ref{1.1.9})+(\ref{3.1.1})}{=}&E_nR_{n+1}R_n R_{n-1}R_n R_{n+1}\\
&\overset{(\ref{1.1.8})}{=}&E_{n}R_{n+1}R_{n-1}R_{n}R_{n-1}R_{n+1}\\
&=&\phi(e_0)\phi(r_1)\phi(r_0)\phi(r_1).
\end{eqnarray*}

\noindent For (\ref{0.1.18}):
\begin{eqnarray*}
\phi(e_1)\phi(e_0)\phi(r_1)
&\overset{(\ref{1.1.5})}{=}&E_{n-1}(E_{n+1}E_{n}R_{n+1})R_{n-1}
\overset{(\ref{3.1.4})}{=}E_{n-1}E_{n+1}R_nR_{n-1}\\
&\overset{(\ref{1.1.7})+(\ref{3.1.1})}{=}&
E_{n+1}E_{n-1}E_{n}=\phi(e_1)\phi(e_0).
\end{eqnarray*}
\end{proof}

At this point, we have explained the algebras and the map $\phi$ occurring
in Theorem \ref{thm:main}.  The surjectivity of $\phi$ will be proved in
Proposition \ref{prop:phisurj} and its injectivity at the end of Section
\ref{sect:phi}.

\section{The classical Brauer algebra} \label{sect:typeA}
Let $m\in\N$.  In this section, we describe the root system of the Coxeter
group of type $\ddA_m$, focussing on special collections of mutually
orthogonal positive roots called admissible sets.  Also, the notion of
height for elements of the Brauer monoid $\BrM(\ddA_{m})$ is introduced and
discussed. A major goal, established in Theorem \ref{th:AmDecomp}, is to
exhibit a normal form for elements of the monoid $\BrM(\ddA_m)$ as a
product of generators.

\begin{defn}\label{defn:Phi}
Let $m\ge1$.  The root system of the Coxeter group $W(\ddA_{m})$ of type
$\ddA_{m}$ is denoted by $\Phi$.  It is realized as $\Phi :=
\{\eps_i-\eps_j\mid 1\le i,j\le m+1,\ i\ne j\}$ in the Euclidean space
$\R^{m+1}$, where $\eps_i$ is the $i^\mathrm{th}$ standard basis vector.  Put $\alp_i
:= \eps_i-\eps_{i+1}$.  Then $\{\alpha_{i}\}_{i=1}^{m}$ is called the set of
simple roots of $\Phi$.  Denote by $\Phi^+$ the set of positive roots in
$\Phi$ with respect to these simple roots; that is, $\Phi^+ := \{\eps_i-\eps_j\mid 1\le i<j\le m+1\}$.
\end{defn}

We have seen that, up to powers of $\delta$, the monomials of $\Br(\ddA_m)$
correspond to Brauer diagrams. In order to work with the tops and bottoms of
Brauer diagrams, we introduce the following notion.

\begin{defn}
\label{df:cA}
Let $\cA$ denote the collection of all subsets of $\Phi$ consisting of
mutually orthogonal positive roots.
Members of $\cA$ are called \emph{admissible sets}.
\end{defn}

An admissible set $B$ corresponds to a Brauer diagram top in the following
way: for each $\b\in B$, where $\b = \eps_i-\eps_j$ for some
$i,j\in\{1,\ldots,m+1\}$ with $i<j$, draw a horizontal strand in the
corresponding Brauer diagram top from the dot $(i,1)$ to the dot
$(j,1)$. All horizontal strands on the top are obtained this way, so there
are precisely $|B|$ horizontal strands.  The top of the Brauer diagram of
Figure \ref{fig:BrauerDiag}, corresponds to the admissible set
$\{\alpha_1+\alpha_2, \alpha_2+\alpha_3+\alpha_4+\alpha_5,
\alpha_5+\alpha_6+\alpha_7\}$.

Similarly, there is an admissible set corresponding to a Brauer diagram
bottom.  The bottom of the Brauer diagram of Figure \ref{fig:BrauerDiag},
corresponds to the admissible set
$\{\alpha_1+\alpha_2+\a_3+\a_4+\a_5+\a_6+\a_7,
\alpha_2+\alpha_3+\alpha_4+\a_5, \alpha_4\}$.

For any $\beta\in\Phi^+$ and $i\in\{1,\ldots,m\}$, there exists a $w\in
W(\ddA_m)$ such that $\beta = w\alpha_i$. Then $E_\beta := wE_iw^{-1}$ is
well defined (see \cite[Lemma 4.2]{CFW2008}).  If $\beta,\gamma\in\Phi^+$
are mutually orthogonal, then $E_\beta$ and $E_\gamma$ commute (see
\cite[Lemma 4.3]{CFW2008}). Hence, for $B\in\cA$, we can define
the product
\begin{eqnarray}
\label{eqn:EprodB}
E_B &=& \prod_{\beta\in B} E_\beta,
\end{eqnarray}
which is a quasi-idempotent, and the normalized version
\begin{eqnarray}
\label{eqn:EhatB}
\hE_B &=& \delta^{-|B|} E_B,
\end{eqnarray}
which is an idempotent element of the Brauer monoid.

There is an action of the Brauer monoid
$\BrM(\ddA_{m})$ on the collection $\cA$.
The generators $R_{i}$ $(i=1,\ldots,m)$ act
by the natural action of Coxeter group elements on its
root sets, where negative roots are negated so as to obtain positive roots,
the element $\delta$ acts as the identity,
and the action of $E_{i}$ $(i=1,\ldots,m)$ is defined by
\begin{equation}
E_i B :=\begin{cases}
B & \text{if}\ \alpha_i\in B, \\
B\cup \{\alpha_{i}\} & \text{if}\ \alpha_i\perp B,\\
R_\beta R_i B & \text{if}\ \beta\in B\setminus \alpha_{i}^{\perp}.
\end{cases}
\end{equation}

Alternatively, this action can be described as follows for a monomial $a$:
complete the top corresponding to $B$ into a Brauer diagram $b$, without
increasing the number of horizontal strands in the top. Now $aB$ is the top
of the Brauer diagram $ab$.  We will make use of this action in order to
provide a normal form for elements of $\BrM(\ddA_m)$.
In \cite[Definition 3.2]{CFW2008}, it is shown that this action is well
defined for any spherical simply laced type.

The action defined above is a left action. Similarly, there is a right
action of $\BrM(\ddA_{m})$ on $\cA$. In order to interpret the right action
of $a$ on $B$, the latter should be pictured as the bottom of a Brauer
diagram, so that the bottom corresponding to $Ba$ is the bottom of $ba$.
Observe that $a\emptyset$ is the top of the Brauer diagram of $a$ (i.e., the
collection of top horizontal strands of $a$ and top row of points) and
$\emptyset a$ is its bottom (i.e., the collection of bottom horizontal strands
of $a$ and bottom row of points).

Recall that the height $\het(\b)$ of a positive root $\beta\in\Phi^+$ is $h$
if it is the sum of precisely $h$ simple roots.  This definition will be
extended to a height function on $\cA$ in such a way that $\het(\{\b\}) =
\het(\b)-1$.

\begin{defn}
\label{df:hetB}
The height of an admissible set $B$, notation $\het(B)$, is
the minimal number of crossings
in a completion of the top corresponding to $B$
to a Brauer diagram without increasing the number of
horizontal strands at the top.
\end{defn}

For example, the height of the top of the Brauer diagram of Figure
\ref{fig:BrauerDiag} is equal to 4 and the height of the bottom is equal to
3.

\begin{defn}
\label{df:hetA}
For every element $a\in \BrM(\ddA_{m})$, we define the height of $a$,
denoted by $\het(a)$, as the minimal number of generators $R_i$ needed to
write $a$ as a product of the generators $R_1,\ldots,R_m, E_1,\ldots,E_m$,
$\delta$, $\delta^{-1}$.
\end{defn}

In terms of Brauer diagrams, the height of $a$ is the minimal number of
crossings needed to draw $a$. Consequently, the height of an admissible set
$B$ is the minimal height over all possible Brauer diagram completions of
$B$.

The Brauer diagram of Figure \ref{fig:BrauerDiag} has height 7 if $V$ is the
identity and height 8 if $V=R_1$.  The lemma below states some useful
properties of this height function.

\begin{rem}\label{rem:oppAn}
There is a natural anti-involution on $\Br(\ddA_m)$,
denoted by $x\mapsto x^{\op}$, determined by
$$R_i \mapsto\ R_i \text{ and } E_i \mapsto E_i.$$
By anti-involution, we mean a $\Z[\delta^{\pm 1}]$-linear anti-automorphism
whose square is the identity.
\end{rem}

\begin{lemma}
\label{lm:hetB}
Let $B,C\in\cA$ with $|B|=|C|$.
Then
there is a unique diagram $a_{B,C}$ of height $\het(B)+\het(C)$
in $\BrM(\ddA_m)$ such that $a_{B,C}\emptyset = B$ and $\emptyset a_{B,C} =
C$.  This diagram satisfies $a_{B,C}a_{B,C}^{\op} =
\delta^{|B|}E_{B}$ as well as $a_{B,C}C = B$ and $B a_{B,C} = C$.
\end{lemma}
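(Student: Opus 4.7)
The plan is to construct $a_{B,C}$ explicitly as a combinatorial Brauer diagram on $m+1$ strands, verify its height by a matched pair of upper and lower bounds, and then read off the three identities by diagram calculus.

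First I would define $a_{B,C}$ as the combinatorial Brauer diagram whose top horizontal strands are those prescribed by $B$, whose bottom horizontal strands are those prescribed by $C$, and whose through strands join the free top dots to the free bottom dots via the unique order-preserving bijection. This is well defined because the equality $|B|=|C|$ forces the same number of free dots on both sides, and by construction $a_{B,C}\emptyset=B$ and $\emptyset a_{B,C}=C$. Concatenating a minimum-crossing planar realization of $B$ at the top (whose existence is promised by Definition \ref{df:hetB}), a minimum-crossing planar realization of $C$ at the bottom, and monotone non-crossing through strands in between yields a drawing with exactly $\het(B)+\het(C)$ crossings, so $\het(a_{B,C})\le\het(B)+\het(C)$.

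For the matching lower bound and for uniqueness, I would argue that any diagram $a$ with $a\emptyset=B$ and $\emptyset a=C$ contributes at least $\het(B)$ crossings to any neighbourhood of its top (because restricting $a$ to such a neighbourhood furnishes a completion of $B$ to a Brauer diagram without increasing its horizontal strands) and symmetrically at least $\het(C)$ at the bottom; moreover any non-order-preserving matching of the remaining free dots forces at least one additional crossing among through strands. Hence $\het(a)\ge\het(B)+\het(C)$, with equality forcing the through-strand matching to be monotone; this determines $a$ uniquely as a combinatorial Brauer diagram, and so $a=a_{B,C}$.

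The three identities follow by diagram calculus. The anti-involution $x\mapsto x^{\op}$ of Remark \ref{rem:oppAn} reflects a diagram across its horizontal axis, so $a_{B,C}^{\op}$ has top horizontal strands $C$, bottom horizontal strands $B$, and the same through-strand matching. Stacking $a_{B,C}^{\op}$ below $a_{B,C}$, the bottom-$C$ strands of $a_{B,C}$ glue against the top-$C$ strands of $a_{B,C}^{\op}$ to close into $|C|=|B|$ loops, each contributing a factor of $\delta$, while the through strands concatenate into straight vertical arcs between matching free positions; the underlying diagram is exactly $E_B$ of \eqref{eqn:EprodB}, so $a_{B,C}a_{B,C}^{\op}=\delta^{|B|}E_B$. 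The remaining identities $a_{B,C}C=B$ and $Ba_{B,C}=C$ are read off directly from the diagrammatic description of the action of $\BrM(\ddA_m)$ on $\cA$ introduced in Section \ref{sect:typeA}: completing $C$ below $a_{B,C}$ (or $B$ above) into a Brauer diagram and multiplying annihilates the horizontally matched strands while preserving $B$ on top (respectively $C$ on the bottom).

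The main obstacle is the additive lower bound together with uniqueness, where the crossing count of an arbitrary competing diagram must be decomposed cleanly into independent contributions from the top strands, the bottom strands, and the through-strand matching. I would execute this rigorously by translating the claim into one about minimum-length expressions of $a$ as a word in the generators $R_i,E_i$, using the action of $\BrM(\ddA_m)$ on $\cA$ from \cite{CFW2008} to track how each $R_i$-application alters the top and bottom admissible sets, so that any saving below $\het(B)+\het(C)$ would contradict the defining minimality in Definition \ref{df:hetB} for either $B$ or $C$.
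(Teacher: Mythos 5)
Your proof is correct and follows essentially the same route as the paper's: the paper's argument is precisely the diagrammatic construction of $a_{B,C}$ by matching the free top dots to the free bottom dots in order-preserving fashion, with minimality and uniqueness read off from the picture. Your write-up is in fact more detailed than the paper's (which omits the crossing-count lower bound and the verification of the three identities entirely), and your accounting of forced crossings and of the loops contributing $\delta^{|B|}$ is sound.
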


\begin{proof}
The easiest proof to our knowledge is based on diagrams.

There is a unique way to complete a
given top and bottom to a Brauer diagram with a minimal number of crossings:
connect the first dot at the top from the left that is not the endpoint of a
horizontal strand at the top to the first dot at the bottom that is not the
endpoint of a horizontal strand at the bottom; proceed similarly with the
second, and so on, until the Brauer diagram is complete. If the top
corresponds to $B$ and the bottom to $C$, the resulting
diagram is the required monomial $a_{B,C}$.
\end{proof}

\begin{lemma}\label{lm:heightZero}
Suppose $B\in\cA$ has height $0$.  Then there are $r = m-2|B|$ diagrams of
height 1 in the group of invertible elements in $\hE_B\BrM(\ddA_m)\hE_B$
forming a Coxeter system of type $\ddA_r$. (Here, invertibility is meant with respect to the unit
$\hE_B$ of the monoid).
\end{lemma}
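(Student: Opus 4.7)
The plan is to argue entirely at the level of Brauer diagrams, using the identification of $\BrM(\ddA_m)$ with diagrams on $2(m+1)$ dots. Because $\het(B)=0$, the top matching $B$ can be drawn as a system of non-crossing arcs near the top edge; let $i_1<i_2<\dots<i_{r+1}$ be the dots of $\{1,\dots,m+1\}$ not used by $B$, where $r+1 = m+1-2|B|$. Any element of $\hE_B\BrM(\ddA_m)\hE_B$ is represented (up to a power of $\delta$) by a Brauer diagram whose top and bottom matchings each contain $B$, and every such diagram is so represented.

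For invertibility in this monoid, whose identity is $\hE_B$, I would show that $x=\hE_B a \hE_B$ is invertible if and only if its top and bottom matchings equal $B$ exactly, with no additional horizontal arcs, so that the remaining strands realize a permutation $\pi_x$ of the free dots $\{i_1,\dots,i_{r+1}\}$: any extra horizontal arc at top or bottom forces a strict loss of rank under composition with a would-be inverse, while the converse is immediate because the top-bottom reflection of such a diagram is a two-sided inverse. This gives a group isomorphism $x\mapsto\pi_x$ from the invertible elements onto the symmetric group $\Sym_{r+1}$ on the free dots.

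Next I would pin down the height of an invertible $x$. Drawing the arcs of $B$ tightly near the top and bottom edges, every planar realization of $x$ contributes no crossings among these arcs, so $\het(x)$ equals the minimum number of crossings among the free-dot strands needed to realize $\pi_x$; this minimum is exactly the Coxeter length $\ell(\pi_x)$ of $\pi_x$ in $\Sym_{r+1}$. In particular the height-$1$ invertible elements are precisely the $r$ diagrams $s_k$ ($1\le k \le r$) whose induced permutations are the simple transpositions $(i_k,i_{k+1})$ of two consecutive free dots.

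It then remains to verify that $s_1,\ldots,s_r$ present a Coxeter system of type $\ddA_r$ inside this group: $s_k^2=\hE_B$ since $(i_k,i_{k+1})^2=1$, $s_ks_l=s_ls_k$ for $|k-l|\ge 2$ by disjointness of the underlying transpositions, and $s_ks_{k+1}s_k=s_{k+1}s_ks_{k+1}$ because both sides are the unique diagram whose permutation is $(i_k,i_{k+2})$. Since these generate $\Sym_{r+1}$, the group of invertible elements is the Coxeter group of type $\ddA_r$ with the $s_k$ as standard generators. The main technical point will be the height equality $\het(x)=\ell(\pi_x)$: the inequality $\het(x)\ge\ell(\pi_x)$ requires ruling out any identity among the $R_i$ and $E_i$ that could give a planar realization with fewer crossings than the obvious minimal braid, and this follows from the combinatorial fact that any Brauer diagram realizing a permutation of $r+1$ free dots (with the remaining dots locked into the matching $B$) needs at least $\ell(\pi)$ strand crossings.
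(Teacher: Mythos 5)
Your argument is correct and follows essentially the same route as the paper's own proof: both identify the invertible elements of $\hE_B\BrM(\ddA_m)\hE_B$ diagrammatically with the symmetric group on the $r+1$ free dots and take the $r$ diagrams in which two consecutive free strands cross as the Coxeter generators. You simply spell out more carefully the two points the paper leaves implicit (that an extra horizontal arc destroys invertibility, and that height equals Coxeter length of the induced permutation), and both of these are sound.
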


\begin{proof}
As discussed above, a Brauer diagram with top and bottom corresponding to
$B$ has $r+1$ free dots at the top and also $r+1$ at the bottom. For a
diagram to be an invertible element in $\hE_B\BrM(\ddA_m)\hE_B$, the
remaining strands need to be vertical, so they belong to the symmetric group on
the $r+1$ free dots at the top (or those at the bottom). Now, up to powers
of $\delta$, the idempotent $\hE_B$ is the element in which all $r$ vertical
strands do not cross. Selecting diagrams in which the $i^\mathrm{th}$ and $(i+1)^\mathrm{st}$
vertical strands cross and no others (for $i=1,2,\ldots,r$), we find the
required Coxeter system of type $\ddA_r$.
\end{proof}

\begin{defn}\label{df:KB}
For $B\in\cA$ of height $0$, denote by $K_B$ the Coxeter group
determined by Lemma \ref{lm:heightZero}.
\end{defn}

In the middle part of the right hand side of
Figure \ref{fig:BrauerDiag}, next to $V$,
the element $e_3e_5e_7=E_B$ appears,
where $B = \{\a_3,\a_5,\a_7\}$ has height $0$.
Now $K_B$ is a Coxeter group of type $\ddA_1$,
generated by $R_1\hE_B$, so the choices for $V$ are consistent
with the  possibilities for $V\hE_B=\hE_B V \hE_B\in K_B$.

\begin{thm} \label{th:AmDecomp}
Let $i\in\{0,1,\ldots,\lfloor m/2\rfloor\}$ and let $B$ be any admissible set
of size $i$ and of height $0$.
Then each element $a$ of $\BrM(\ddA_{m})$ with $|a\emptyset|=i$
can be written uniquely
as
$$\delta^k UVW$$
for certain $k\in \Z$,
$U$ a diagram in $\BrM(\ddA_{m})E_{B}$ with $UB = a\emptyset$,
$W$ a diagram in $ E_{B} \BrM(\ddA_{m})$ with $\emptyset a = BW$,
and $V\in K_{B}$
such that $$\het(a) = \het(U)+\het(V)+\het(W) .$$
\end{thm}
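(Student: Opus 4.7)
The plan is to construct the decomposition diagrammatically: $U$ and $W$ will be the unique minimum-crossing transitions from the top of $a$ down to $B$, and from $B$ down to the bottom of $a$, respectively, while $V \in K_B$ will encode the residual permutation of the vertical strands. Write $a = \delta^{k_0}a_0$ with $a_0$ a pure Brauer diagram, and set $T := a\emptyset$ and $B' := \emptyset a$, both admissible of size $i$. The vertical strands of $a_0$ yield a bijection between the $m+1-2i$ free top dots and free bottom dots; after fixing left-to-right orderings, this is a permutation $\pi_a$. By Lemma \ref{lm:hetB}, let $U := a_{T,B}$ and $W := a_{B,B'}$; these are the unique diagrams of heights $\het(T)$ and $\het(B')$ with the prescribed top and bottom matchings (recalling $\het(B) = 0$). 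Each carries a canonical left-to-right bijection $\rho_U$, $\rho_W$ between free dots. By Lemma \ref{lm:heightZero}, $K_B$ is a Coxeter group of type $\ddA_{m-2i}$, isomorphic to the symmetric group on the $m+1-2i$ free dots of $B$; let $V \in K_B$ be the element corresponding to the permutation $\pi_V := \rho_W^{-1}\,\pi_a\,\rho_U^{-1}$. Then $\het(V)$ equals the Coxeter length $\ell(\pi_V)$, which coincides with $\ell(\pi_a)$ because $\rho_U,\rho_W$ are position-preserving.

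For existence, compute $UVW$ as a product of diagrams. At each of the $U$-$V$ and $V$-$W$ interfaces the matching $B$ on the adjacent top/bottom coincides, producing $|B|=i$ closed loops; the normalization $\delta^{-i}$ in $V = \pi_V\hE_B$ from (\ref{eqn:EhatB}) cancels one such contribution, and a direct computation yields $UVW = \delta^{i} a_0$. The resulting diagram has top $T$, bottom $B'$, and vertical permutation $\rho_W\circ\pi_V\circ\rho_U = \pi_a$, matching $a_0$. Setting $k := k_0 - i$ then gives $a = \delta^k UVW$.

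The main technical point, and the principal obstacle, is the height additivity $\het(a) = \het(U) + \het(V) + \het(W)$. The inequality $\leq$ is immediate from juxtaposing minimum-crossing drawings of $U$, $V$, $W$, giving $\het(T)+\ell(\pi_V)+\het(B')$ crossings total. For the reverse inequality, one argues that any drawing of $a_0$ can be brought into a layered normal form---top horizontal strands in an upper band, vertical strands in a middle band, bottom horizontal strands in a lower band---without increasing the total crossing count, by eliminating ``mixed'' crossings (between horizontal and vertical strands, or between horizontal strands at differing heights) via local Reidemeister-style moves. In such a form the crossings split into three disjoint contributions of at least $\het(T)$, $\ell(\pi_a)$, and $\het(B')$, respectively, which matches the heights of $U$, $V$, $W$ exactly.

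For uniqueness, suppose $a = \delta^{k'}U'V'W'$ is another decomposition meeting the hypotheses. The condition $U' \in \BrM(\ddA_m)E_B$, together with $|a\emptyset| = |B| = i$, forces the bottom matching of $U'$ to be precisely $B$; then $U'B = a\emptyset$ forces its top to be $T$. Height additivity combined with the uniqueness clause of Lemma \ref{lm:hetB} then forces $U' = a_{T,B} = U$, and symmetrically $W' = W$. Substituting into $\delta^k UVW = \delta^{k'}UV'W$ and multiplying on the left by $U^{\op}$ and on the right by $W^{\op}$, using $UU^{\op} = \delta^{|T|}E_T$ and $W^{\op}W = \delta^{|B'|}E_{B'}$ together with the idempotency of $\hE_B$, one reduces to an identity of the form $\delta^k V = \delta^{k'}V'$ in $K_B$. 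Since $K_B$ is a group and its elements are linearly independent over $\Z[\delta^{\pm 1}]$, we conclude $V' = V$ and $k' = k$.
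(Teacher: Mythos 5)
Your proposal follows the same route as the paper's proof: take $U=a_{a\emptyset,B}$ and $W=a_{B,\emptyset a}$ from Lemma \ref{lm:hetB}, let $V$ be the residual element of $K_B$, and verify the product. Your bookkeeping of the closed loops at the two interfaces (giving $UVW=\delta^{i}a_0$) is correct and in fact tidier than the paper's count. You also supply an argument for the height formula $\het(a)=\het(a\emptyset)+\ell(\pi_a)+\het(\emptyset a)$ via a layered normal form, which the paper simply asserts; your sketch of the reverse inequality by local moves would need fleshing out to be fully rigorous, but the idea is sound and the paper offers nothing more explicit.

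The one step that does not hold as written is in the uniqueness part: ``height additivity combined with the uniqueness clause of Lemma \ref{lm:hetB} forces $U'=a_{T,B}$.'' Additivity of the \emph{total} height does not force $\het(U')=\het(T)$, because the excess crossings (the permutation parts $\rho_{U'},\pi_{V'},\rho_{W'}$) can be distributed among the three factors while keeping $\ell(\rho_{U'})+\ell(\pi_{V'})+\ell(\rho_{W'})=\ell(\pi_a)$. Concretely, for $m=1$, $a=R_1$, $B=\emptyset$ (so $E_B=1$ and $K_B=W(\ddA_1)$), the triples $(R_1,1,1)$, $(1,R_1,1)$ and $(1,1,R_1)$ all satisfy every stated hypothesis, so $U'$ is not determined. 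What is actually needed is the individual minimality $\het(U')=\het(a\emptyset)$ and $\het(W')=\het(\emptyset a)$, i.e.\ that $\rho_{U'}$ and $\rho_{W'}$ are the order-preserving bijections; only then does Lemma \ref{lm:hetB} pin down $U'$ and $W'$, after which your $U^{\op}(\cdot)W^{\op}$ computation determines $V$. To be fair, the theorem as stated has the same defect, and the paper's own proof quietly invokes ``minimality of the height of $U$'' as though it were a hypothesis; but since you explicitly claim to \emph{derive} $U'=a_{T,B}$ from height additivity, you should either add the individual minimality of $\het(U')$ and $\het(W')$ to the data being shown unique, or accept that the claim is false from the stated hypotheses alone, as the example shows.
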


\begin{proof}
Take $U =a_{a\emptyset,B}$ and $W = a_{\emptyset a, B}^{\op}$.
Then $V = U^{\op}a W^{\op}$ has top and bottom equal to $B$ and so belongs to
$K_B$.
By Lemma \ref{lm:hetB} and (\ref{eqn:EhatB}),
\begin{eqnarray*}
UVW &=& a_{a\emptyset,B} a_{a\emptyset,B}^{\op} a
a_{\emptyset a, B} a_{\emptyset a, B}^{\op}
=\delta^{4i} \hE_{a\emptyset} a \hE_{\emptyset
  a} = \delta^k a
\end{eqnarray*}
for $k = 4i$.
As $\het(U) = \het(a\emptyset)$ and
$\het(W) = \het(\emptyset a)$ and $\het(V)$ is the length of $V$ with
respect to the Coxeter system of Lemma \ref{lm:heightZero},
this proves that $a$ has a decomposition as stated.

As for uniqueness, suppose $a=UVW$ is a product decomposition as stated.
Then $a\emptyset = UVB=UB=U \emptyset$ (as $U = U\hE_B$)
and $\emptyset U = B$, so by Lemma
\ref{lm:heightZero} and minimality of the height of $U$, we find $U =
a_{a\emptyset,B}$. Similarly, $W$ can be shown to be equal to $ a_{\emptyset
a,B}^{\op}$.  Finally, $V = \hE_BV\hE_B =\delta^{-4|B|}
 U^{\op} UV W W^{\op} = \delta^{-4|B|} U^{\op} a
W^{\op}$ is uniquely determined by $a$, $U$, and $W$.
\end{proof}

\section{Elementary properties of type $\ddC$ algebras} \label{sect:typeB}
In this section we draw some easy consequences from the definition of a
Brauer algebra of type $\ddC_n$. The results are of use in later sections.

\begin{lemma} In $\Br(\ddC_n,R,\delta)$, the following
equations hold.
\begin{eqnarray}
r_{1}e_0 e_{1}&=&e_{0}e_{1}     \label{4.1.2}
\\
e_0e_{1}e_{0}&=&e_0r_{1}e_0  \label{4.1.1}
\\
e_{1}r_{0}r_{1}e_{0}&=&e_{1}e_{0}       \label{4.1.3}
\\
r_0r_{1}e_{0}r_{1}&=&r_{1}e_{0}r_{1}r_0       \label{4.1.4}
\\
e_0r_{1}e_0r_{1}&=&e_0e_{1}e_0                \label{4.1.5}
\end{eqnarray}
\end{lemma}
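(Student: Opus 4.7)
The strategy is to derive each of the five identities directly from the defining relations of Definition \ref{0.1}, in an order that allows later derivations to exploit earlier ones. I would dispatch \eqref{4.1.3} and \eqref{4.1.4} first as one-line consequences, then establish an auxiliary identity $e_0 r_1 e_0 r_1 = e_0 r_1 e_0$, which will serve as the engine for the remaining three.

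For \eqref{4.1.3}, multiply \eqref{0.1.17} on the right by $e_0$ and collapse the tail via $r_0 e_0 = e_0$ from \eqref{0.1.4}. For \eqref{4.1.4}, multiply \eqref{0.1.20} on the left by $r_1$ to obtain $r_0 r_1 e_0 = r_1 e_0 r_1 r_0 r_1$, then on the right by $r_1$ and invoke $r_1^2 = 1$.

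To establish the auxiliary identity, I would multiply \eqref{0.1.18} on the left by $e_0$ to get $e_0 e_1 e_0 r_1 = e_0 e_1 e_0$, substitute $e_0 e_1 e_0 = r_1 e_0 r_1 e_0$ from \eqref{0.1.19} on both sides, and cancel the leading $r_1$. With this in hand, \eqref{4.1.1} follows by multiplying \eqref{0.1.19} on the left by $e_0$: the right-hand side becomes $\delta e_0 e_1 e_0$ via $e_0^2 = \delta e_0$, while the left-hand side collapses via the auxiliary identity and the same relation $e_0^2 = \delta e_0$ to $\delta e_0 r_1 e_0$. Identity \eqref{4.1.5} then drops out by substituting \eqref{4.1.1} into the auxiliary identity. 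For \eqref{4.1.2}, multiplying \eqref{4.1.1} on the right by $e_1$ and using $e_1 e_0 e_1 = \delta e_1$ from \eqref{0.1.16} yields $e_0 r_1 e_0 e_1 = \delta e_0 e_1$; multiplying \eqref{0.1.19} on the right by $e_1$ and invoking this equality gives $\delta r_1 e_0 e_1 = \delta e_0 e_1$, hence \eqref{4.1.2}.

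The main obstacle is spotting the auxiliary identity: it is not explicit in the presentation but is essentially forced once \eqref{0.1.18} and \eqref{0.1.19} are combined. Once noticed, every remaining step amounts to one or two multiplications followed by an application of $e_0^2 = \delta e_0$ or $e_1 e_0 e_1 = \delta e_1$.
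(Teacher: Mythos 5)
Your proof is correct: every step checks out against the relations of Definition \ref{0.1}, and the handling of (\ref{4.1.3}) and (\ref{4.1.4}) coincides with the paper's. Where you diverge is in the logical ordering of the remaining three identities. The paper derives (\ref{4.1.2}) first, directly from (\ref{0.1.16}) and (\ref{0.1.19}), and then obtains (\ref{4.1.1}) by left-multiplying (\ref{4.1.2}) into (\ref{0.1.19}) and (\ref{4.1.5}) by combining (\ref{4.1.1}) with (\ref{0.1.18}). You instead extract the auxiliary identity $e_0r_1e_0r_1=e_0r_1e_0$ from (\ref{0.1.18}) and (\ref{0.1.19}), use it (together with $e_0^2=\delta e_0$ and invertibility of $\delta$) to get (\ref{4.1.1}), read off (\ref{4.1.5}), and only then deduce (\ref{4.1.2}) at the end via (\ref{0.1.16}). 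The two routes use the same small pool of relations and are of comparable length, but yours has the advantage that each equality is a literal substring replacement; the paper's opening chain for (\ref{4.1.2}) compresses an intermediate manipulation (effectively the identity $e_0r_1e_0e_1=\delta e_0e_1$, which you prove explicitly) into a single step labelled (\ref{0.1.19}), so your version is the more transparently verifiable of the two.
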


\begin{proof} By Definition \ref{0.1},
\[
r_{1}e_0 e_{1} \overset{(\ref{0.1.16})}{=} \delta^{-1}r_{1}e_0e_{1}e_0 e_{1} \overset{(\ref{0.1.19})}{=} \delta^{-1} e_0 e_{1}e_0 e_{1} \overset{(\ref{0.1.16})}{=} e_0 e_{1},
\]
proving (\ref{4.1.2}).
Therefore
\[ e_0e_{1}e_{0} \overset{(\ref{4.1.2})}{=} r_{1}e_{0}e_{1} e_{0}
\overset{(\ref{0.1.19})}{=} r_{1}r_{1} e_0 r_{1} e_0
\overset{(\ref{0.1.3})}{=} e_0r_{1}e_{0}\]
giving (\ref{4.1.1}).

It is easy to check that (\ref{4.1.3}) follows from (\ref{0.1.17}) and
(\ref{0.1.4}). Also, the identity (\ref{4.1.4}) holds as
  \begin{eqnarray*}
 r_0r_{1}e_{0}r_{1}
 &\overset{(\ref{0.1.3})}{=}&r_{1}(r_{1}r_0r_{1}e_{0})r_{1}
\overset{(\ref{0.1.20})}{=}r_{1}e_{0}r_{1}r_0r_{1}r_{1}
\overset{(\ref{0.1.3})}{=}r_{1}e_{0}r_{1}r_0.
  \end{eqnarray*}

\noindent Finally,
\[
(e_0r_{1}e_0)r_{1} \overset{(\ref{4.1.1})}{=} e_0(e_{1}e_{0}r_{1})
\overset{(\ref{0.1.18})}{=} e_0e_{1}e_{0},
\]
proving (\ref{4.1.5}).
\end{proof}

As in the case of type $\ddA_m$ (see Remark \ref{rem:oppAn})
there is similarly a natural anti-involution on $\Br(\ddC_n)$. This anti-involution is denoted by the superscript $\op$, so the map is given by $x\mapsto x^{\op}$.

\begin{prop} \label{prop:opp}
The identity map on
$\{\delta, r_i, e_i \mid i = 0,\ldots,n-1\}$ extends to a unique
anti-involution on the Brauer algebra $\Br(\ddC_n,R,\delta)$.
\end{prop}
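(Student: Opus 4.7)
The plan is standard: let $F$ be the free associative $R$-algebra on the symbols $r_0,\ldots,r_{n-1},e_0,\ldots,e_{n-1}$ and consider the unique $R$-linear map $\tau\colon F\to F$ that sends a word $x_{i_1}x_{i_2}\cdots x_{i_k}$ (in the generators) to $x_{i_k}\cdots x_{i_2}x_{i_1}$. Clearly $\tau$ is an $R$-linear anti-automorphism of $F$ with $\tau^{2}=\mathrm{id}$, and it fixes each generator. To finish, it is enough to show that $\tau$ descends to the quotient $\Br(\ddC_n,R,\delta)$; that is, for each defining relation $u=v$ of Definition~\ref{0.1}, one has $\tau(u)=\tau(v)$ in $\Br(\ddC_n,R,\delta)$. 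Uniqueness of the extension is automatic because the values on the generators determine an $R$-linear anti-homomorphism on all of $\Br(\ddC_n,R,\delta)$.

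Most defining relations are either palindromic word equalities or come in reverse-pairs. Relations \eqref{0.1.3}--\eqref{0.1.10} each involve words of length at most three in commuting or braid pairs; each side is visibly sent to itself (or to the opposite side of the same relation) by $\tau$. The same is true for the braid-type relations \eqref{0.1.15}, \eqref{0.1.11}, \eqref{0.1.20}, \eqref{0.1.12}, and \eqref{0.1.16}, all of which are palindromic. Relation \eqref{0.1.13} reverses to $e_{j}r_{i}r_{j}=e_{j}e_{i}$, which is a known identity in simply-laced Brauer algebras and is routinely checked from \eqref{0.1.13} together with the involutivity of $r_i,r_j$ and \eqref{0.1.4}.

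The genuinely type-$\ddC$ relations pair up with identities supplied by the preceding lemma. Specifically, $\tau$ sends \eqref{0.1.14} $\bigl(r_1r_0e_1=r_0e_1\bigr)$ to \eqref{0.1.17} $\bigl(e_1r_0r_1=e_1r_0\bigr)$, and vice versa. Similarly, $\tau$ sends \eqref{0.1.18} $\bigl(e_1e_0r_1=e_1e_0\bigr)$ to $r_1e_0e_1=e_0e_1$, which is exactly \eqref{4.1.2}. And $\tau$ sends \eqref{0.1.19} $\bigl(r_1e_0r_1e_0=e_0e_1e_0\bigr)$ to $e_0r_1e_0r_1=e_0e_1e_0$, which is \eqref{4.1.5}. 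Thus every defining relation $u=v$ satisfies $\tau(u)=\tau(v)$ in $\Br(\ddC_n,R,\delta)$, so $\tau$ induces a well-defined $R$-linear anti-homomorphism on the quotient. Since its square fixes the generators it equals the identity, so $\tau$ is an anti-involution.

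The only mild obstacle is ensuring none of the reversed relations introduces circular reasoning: the four identities in the lemma preceding this proposition were derived strictly from Definition~\ref{0.1}, so invoking \eqref{4.1.2} and \eqref{4.1.5} to verify the reverses of \eqref{0.1.18} and \eqref{0.1.19} is legitimate. Given that, the verification is a short, mechanical check.
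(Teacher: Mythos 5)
Your proposal is correct and follows essentially the same route as the paper: identify which defining relations are not sent to themselves by word reversal, observe that (\ref{0.1.14}) and (\ref{0.1.17}) are mutual reverses, and invoke (\ref{4.1.2}) and (\ref{4.1.5}) (derived beforehand from Definition~\ref{0.1}, so without circularity) for the reverses of (\ref{0.1.18}) and (\ref{0.1.19}). The one small inaccuracy is your ingredient list for the reverse $e_jr_ir_j=e_je_i$ of (\ref{0.1.13}): involutivity of the $r$'s and (\ref{0.1.4}) do not suffice --- one needs (\ref{0.1.15}) as well, writing $e_jr_ir_j=r_i(r_ie_jr_i)r_j=r_i(r_je_ir_j)r_j=r_ir_je_i=e_je_i$, which is how the paper handles it (and matches the simply-laced identity (\ref{3.1.1})).
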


\begin{proof}
It suffices to check the defining relations given in Definition \ref{1.1}
still hold under the anti-involution.  An easy inspection shows that all
relations involved in the definition are invariant under $\op$, except for
(\ref{0.1.13}), (\ref{0.1.14}), (\ref{0.1.19}), (\ref{0.1.17}), and
(\ref{0.1.18}).  The relation obtained by applying $\op$ to (\ref{0.1.13})
holds as can be seen by using (\ref{0.1.15}) followed by (\ref{0.1.3})
together with (\ref{0.1.13}).  The equality \ref{0.1.17} is the op-dual of
(\ref{0.1.14}). Finally, (\ref{4.1.2}) and (\ref{4.1.5}) state that the $\op$
duals of (\ref{0.1.18}) and (\ref{0.1.19}), respectively, hold.
\end{proof}

\noindent For each $i\in\{1,\ldots,n\}$, we define the following two elements of $\BrM(\ddC_n)$.
\begin{eqnarray}
\label{df:yn}
y_i&:=&r_{i-1}r_{i-2}\cdots r_1r_0r_1\cdots r_{i-2}r_{i-1},\\
z_i&:=&r_{i-1}r_{i-2}\cdots r_1e_0r_1\cdots r_{i-2}r_{i-1}.
\label{df:zn}
\end{eqnarray}

\begin{prop}\label{X_n}
Let $n\ge2$ and $i\in\{2,\ldots,n\}$ and consider elements in
$\BrM(\ddC_n)$.
\begin{enumerate}[(i)]
\item  $e_{i}, r_{i},y_i$,
and $z_i$ commute with each of
$r_j$ and $e_j$ for $0\le j \le i-2$.
\item $y_i$ and $z_i$ commute with $y_j$ and $z_j$
for each $j\in\{1,\ldots,n\}$.
\end{enumerate}
\end{prop}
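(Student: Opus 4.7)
My plan is to prove both parts by induction on $i$, with (ii) following from (i) by support considerations. For (i), the claim for $r_i$ and $e_i$ is immediate: $0\le j\le i-2$ forces $i\nsim j$ in $\ddC_n$, so (\ref{0.1.7})--(\ref{0.1.9}) yield all four commutations. For $y_i$ and $z_i$ I would use the recursion
\[
y_i = r_{i-1}y_{i-1}r_{i-1},\qquad z_i = r_{i-1}z_{i-1}r_{i-1}\qquad (i\ge 2),
\]
with $y_1=r_0$ and $z_1=e_0$. The base case $i=2$ reduces to short direct verifications that $y_2=r_1r_0r_1$ and $z_2=r_1e_0r_1$ commute with $r_0$ and $e_0$: relations (\ref{0.1.11}) and (\ref{0.1.20}) dispatch $y_2$, while (\ref{4.1.4}), (\ref{0.1.19}), and (\ref{4.1.5}) dispatch $z_2$.

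In the inductive step ($i\ge 3$), I first record two facts obtained from the basic commutation relations: $r_{i-1}$ commutes with every $r_j, e_j$ for $j\le i-3$, and $e_{i-1}$ commutes with every generator occurring in $y_{i-2}$ or $z_{i-2}$, namely each $r_j$ for $j\le i-3$ and, in the case of $z_{i-2}$, also $e_0$ (using $i-1\nsim 0$, which holds for $i\ge 3$). With these, the sub-case $0\le j\le i-3$ is handled by sandwiching the inductive hypothesis between the outer copies of $r_{i-1}$.

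The real work lies in the boundary sub-case $j = i-2$. For $y_i r_{i-2} = r_{i-2}y_i$ (and the analogue for $z_i$), I expand $y_i = r_{i-1}r_{i-2}y_{i-2}r_{i-2}r_{i-1}$ and apply the braid relation (\ref{0.1.10}) twice, exploiting $r_{i-1}y_{i-2}=y_{i-2}r_{i-1}$. For $y_i e_{i-2} = e_{i-2}y_i$, the key is to apply (\ref{0.1.13}) in the guise $r_{i-2}r_{i-1}e_{i-2}=e_{i-1}e_{i-2}$ (and its image $e_{i-2}r_{i-1}r_{i-2}=e_{i-2}e_{i-1}$ under the anti-involution of Proposition \ref{prop:opp}) to absorb the outer $r$-pair adjacent to $e_{i-2}$, and then the instance $r_{i-1}r_{i-2}e_{i-1}=e_{i-2}e_{i-1}$ of (\ref{0.1.13}) together with its op-version to collapse the remaining pair after moving $e_{i-1}$ through $y_{i-2}$. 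Both sides reduce to
\[
y_i e_{i-2} = e_{i-2}e_{i-1}y_{i-2}e_{i-2},\qquad e_{i-2}y_i = e_{i-2}y_{i-2}e_{i-1}e_{i-2},
\]
which agree because $e_{i-1}$ commutes with $y_{i-2}$. The identical derivation with $z_{i-2}$ in place of $y_{i-2}$ handles $z_i$. This manipulation is the main technical obstacle.

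Given (i), part (ii) is essentially free. Without loss of generality assume $j\le i$. When $j<i$, each of $y_j,z_j$ is a word in $r_0,e_0,r_1,\dots,r_{j-1}$; since $j-1\le i-2$, part (i) tells us $y_i$ and $z_i$ commute with each such generator, hence with $y_j$ and $z_j$. When $j=i$, induct on $i$: $y_1z_1 = r_0e_0 = e_0r_0 = z_1y_1$ by (\ref{0.1.4}), and for $i\ge 2$,
\[
y_i z_i = r_{i-1}(y_{i-1}z_{i-1})r_{i-1} = r_{i-1}(z_{i-1}y_{i-1})r_{i-1} = z_iy_i.
\]
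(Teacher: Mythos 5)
Your proof is correct and follows essentially the same route as the paper's: both reduce the general case $0\le j\le i-2$ to the distance-two case by peeling off outer $r$'s that commute with $r_j,e_j$, and both settle that case (your boundary sub-case $j=i-2$, the paper's claim that $y_{j+2},z_{j+2}$ commute with $r_j,e_j$) by the same braid-relation juggling together with (\ref{0.1.11}), (\ref{0.1.19}), (\ref{0.1.20}), (\ref{4.1.4}), (\ref{4.1.5}), the only cosmetic difference being your use of (\ref{0.1.13}) and its op-dual where the paper uses (\ref{0.1.15}). Part (ii) is likewise handled identically, via support considerations for $j<i$ and simultaneous conjugation of $r_0e_0=e_0r_0$ for $j=i$.
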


\begin{proof}
(i).
By Definition \ref{0.1}, both $e_{i}$ and $r_{i}$ commute
with each element of $\{r_0,\ldots,r_{i-2},e_0,\ldots,e_{i-2}\}$.

In order to prove that
$y_i$ commutes with the indicated elements, we first
establish the claim that $y_{i+2}$ commutes with $r_i$ and $e_i$, for $0
\leq i \leq n-2$.

If $i = 0$, the claim follows from (\ref{0.1.11}) and (\ref{0.1.20}),
respectively. If $i > 0$, we have
\begin{eqnarray*}
  y_{i+2} r_i &=& r_{i+1} r_i \cdots r_1 r_0 r_1 \cdots r_i r_{i+1} r_i
\overset{(\ref{0.1.10})}{=} r_{i+1} r_i \cdots r_1 r_0 r_1 \cdots r_{i-1} r_{i+1} r_i r_{i+1} \\
  &\overset{(\ref{0.1.7})}{=}&r_{i+1} r_i r_{i+1} r_{i-1} \cdots r_1 r_0 r_1 \cdots r_i r_{i+1} \\
&\overset{(\ref{0.1.10})}{=}&
 r_i r_{i+1} r_i \cdots r_1 r_0 r_1 \cdots r_{i-1} r_i r_{i+1} = r_i y_{i+2},
\end{eqnarray*}
and
\begin{eqnarray*}
  y_{i+2} e_i &=& r_{i+1} r_i \cdots r_1 r_0 r_1 \cdots r_i r_{i+1} e_i
\overset{(\ref{0.1.15})}{=} r_{i+1} r_i \cdots r_1 r_0 r_1 \cdots r_{i-1} e_{i+1} r_i r_{i+1} \\
  &\overset{(\ref{0.1.8})}{=}&r_{i+1} r_i e_{i+1} r_{i-1} \cdots r_1 r_0 r_1
\cdots r_i r_{i+1}
\\
&\overset{(\ref{0.1.15})}{=}&
e_i r_{i+1} r_i \cdots r_1 r_0 r_1 \cdots r_{i-1} r_i r_{i+1} = e_i y_{i+2}.
\end{eqnarray*}
Now, for arbitrary $i$ and all $0 \leq j \leq i-2$,
using
$y_i = r_{i-1}\cdots r_{j+2} y_{j+2} r_{j+2} \cdots  r_{i-1}$
and  (\ref{0.1.7}), we find
\begin{eqnarray*}
y_ir_j &=&  r_{i-1}\cdots r_{j+2} y_{j+2} r_{j+2} \cdots  r_{i-1}r_j
=r_{i-1}\cdots r_{j+2} y_{j+2}r_j r_{j+2} \cdots  r_{i-1}\\
&=&r_{i-1}\cdots r_{j+2}r_j y_{j+2} r_{j+2} \cdots  r_{i-1}
=r_jr_{i-1}\cdots r_{j+2} y_{j+2} r_{j+2} \cdots  r_{i-1}\\
&=&r_jy_i
\end{eqnarray*}
Similarly for $e_j$ instead of $r_j$, by use of (\ref{0.1.8}).

An analogous argument can be used for $z_i$.
Again, it suffices to show that $z_{i+2}$ commutes with $r_i$ and $e_i$.
For $i=0$ we verify
\begin{eqnarray*}
z_2 e_0 &=& (r_1 e_0 r_1) e_0 \overset{(\ref{0.1.19}) + (\ref{4.1.5})}{=} e_0 (r_1 e_0 r_1) = e_0 z_2,
\\
z_2 r_0 &=&
(r_1 e_0 r_1) r_0 \overset{(\ref{0.1.3})}{=} r_1 e_0 r_1 r_0 r_1 r_1 \overset{(\ref{0.1.20}) + (\ref{0.1.3})}{=} r_0 (r_1 e_0 r_1) = e_0 z_2.
\end{eqnarray*}
For $i > 0$, it is straightforward to show that $z_{i+2} r_i = r_i z_{i+2}$, using (\ref{0.1.7}) and (\ref{0.1.10}), and $z_{i+2} e_i = e_i z_{i+2}$, by using a rearrangement of (\ref{0.1.15}).
Thus, an argument  similar to the above proves that $z_{i}r_{j} = r_j z_i$
and $z_{i}e_{j} = e_j z_i$, for any $i$ and all $0 \leq j \leq i-2$.

As $y_i$ and $z_i$ are conjugates of $r_0$ and $e_0$ by the same
Coxeter group element, it follows from (\ref{0.1.4}) that they commute.  It
remains to verify that $y_i$ and $z_i$ commute with $y_{i-1}$ and $z_{i-1}$.
But the latter two elements are products of generators from $\{r_0,\ldots,
r_{i-2},e_0,\ldots,e_{i-2}\}$, which are known to commute with $y_i$ and $z_i$
by (i) and (ii). This finishes the proof of (i) and (ii).
\end{proof}

\begin{rem}
Using Proposition \ref{X_n}, one can prove that for any $n \geq 1$,
$$\BrM(\ddC_n)=
\BrM(\ddC_{n-1})\{1, e_{n-1}, r_{n-1},y_n, z_n \}\BrM(\ddC_{n-1}).$$
This ensures that $\Br(\ddC_n)$ is of \emph{finite rank} over
$\Z[\delta^{\pm 1}]$. Details of the proof of the ensuing `normal form' are
surpressed here, as this result is not used in this paper.
\end{rem}

\section{Surjectivity of $\phi$} \label{sect:surj}
The goal of this section is to exhibit a collection of admissible sets on
which $\BrM(\ddC_n)$ acts as well as to prove that the map $\phi :
\Br(\ddC_n)\longrightarrow \SBr(\ddA_{2n-1})$ introduced in Theorem
\ref{thm:main} is surjective.  To this end, we first construct the root
system of type $\ddC_n$ in terms of $\sigma$-fixed vectors in the reflection
space for $W(\ddA_{2n-1})$ spanned by the root system $\Phi$ of Definition
\ref{defn:Phi}.  Notice that the restriction of $\phi$ to the submonoid
$W(\ddC_n)$ of $\BrM(\ddC_n)$ generated by the $r_i$ (isomorphic, as the
notation suggests, to the Coxeter group of type $\ddC_n$) is known to be
injective (see, for instance \cite{Muehl92}), with image the centralizer of
$\sigma$ in the submonoid $W(\ddA_{2n-1})$ of $\BrM(\ddA_{2n-1})$.

We adopt the notation of Section \ref{sect:typeA} with $m=2n-1$, and will us
the root system $\Phi$, the collection of admissible sets $\cA$, and the
action of $\BrM(\ddA_{2n-1})$ on $\cA$ defined there.
We let the involution $\sigma$ act on the set $\Phi^+$ of positive roots of
$\Phi$ in the following way,
where $\alp_i$ are as in Definition
\ref{defn:Phi}.
For $\Sigma c_{i}\alpha_{i}\in
\Phi^{+}$ we decree
$$\sigma(\Sigma c_{i}\alpha_{i})=\Sigma c_{i}\alpha_{2n-i} \qquad(1\le
 i<2n).$$ This map $\sigma$ induces the permutation of the simple roots
corresponding to the nontrivial automorphism of the Coxeter diagram
 $\ddA_{2n-1}$.
This permutation can be extended to the
linear transformation of $\R^{2n}$, again denoted $\sigma$,
determined by $\sigma(\eps_i) =
- \eps_{2n+1-i}$.
(Indeed, this transformation satisfies $\sigma(\alp_i) = \alp_{2n-i}$ for each $i\in\{1,\ldots,2n-1\}$).
The vectors fixed by $\sigma$ form an $n$-dimensional subspace, to be
denoted
$\R^{2n}_\sigma$, of
the $(2n-1)$-dimensional subspace of $\R^{2n}$
spanned by $\Phi$.

\begin{defn}\label{df:fp}
Let $\fp :\R^{2n}\to\R^{2n}_\sigma$ be the orthogonal projection from
$\R^{2n}$ onto $\R^{2n}_\sigma$, that is, $\fp(x) = (x+\sigma(x))/2$ for
$x\in\R^{2n}$.  Let $\alp\in\Phi$. Then $\fp(\alp) = \alpha$ is of squared
norm 2 if $\sigma(\alp) = \alp$ and $\fp(\alp) =
\frac{1}{2}(\alpha+\sigma(\alpha))$ is of squared norm $1$ if $\sigma(\alp)
\ne \alp$.  The image $\Psi = \fp(\Phi)$ of $\Phi$ under $\fp$ is a root
system of type $\ddC_n$ with simple roots $\beta_0=\fp(\alpha_n)=\a_n$ and
$\beta_i=\fp(\alpha_{n-i})=\fp(\alpha_{n+i})$ for $i=1,\dots,n-1$.  It is
contained in $\R^{2n}_\sigma$ and spans it. Of course, $\Psi^+$ will be
understood to be the half of $\Psi$ lying in the cone spanned by $\beta_i$
$(i=0,\ldots,n-1)$. Given $\alp\in\Phi$ we write $R_\alp$ for the orthogonal
reflection on $\R^{2n}$ with root $\alp$.  Given $\beta\in\Psi$ we write
$r_\beta$ for the orthogonal reflection on $\R^{2n}_\sigma$ with root
$\beta$.  We may identify $R_i$ and $r_j$ with $R_{\alp_i}$ and
$r_{\beta_j}$, respectively.
\end{defn}

Recall that $\phi(r_0) = R_n$
and $\phi(r_j) = R_{n-j}R_{n+j}$ for $j\in\{1,\ldots,n-1\}$.

\begin{lemma}\label{lm:sigmaandphi}
The map $\sigma:\R^{2n}\to\R^{2n}$ and the restriction of $\phi$ to
$W(\ddC_n)$ satisfy the following properties for each $w\in W(\ddC_n)$.
\begin{enumerate}[(i)]
\item
$\sigma \phi(w) = \phi(w)\sigma$.
\item
$\phi(w)x  = wx$ if $x\in\R^{2n}_\sigma$.
\end{enumerate}
\end{lemma}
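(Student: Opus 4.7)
The plan is to verify both parts on the Coxeter generators $r_0,\ldots,r_{n-1}$ of $W(\ddC_n)$ and then extend multiplicatively to all $w$. Since $\phi$ restricted to $W(\ddC_n)$ is a group homomorphism into $W(\ddA_{2n-1})$, and both statements are preserved under products (for (ii), one needs (i) to know that $\phi(w)$ maps $\R^{2n}_\sigma$ to itself, after which an induction on the length of $w$ works), the generator case suffices.

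For (i), consider first $r_0$: $\phi(r_0)=R_n=R_{\alpha_n}$, and because $\sigma(\alpha_n)=\alpha_n$, the linear map $\sigma$ commutes with $R_n$. For $r_i$ with $0<i<n$: $\phi(r_i)=R_{n-i}R_{n+i}$, and $\sigma$ swaps $\alpha_{n-i}$ with $\alpha_{n+i}$. Hence $\sigma R_{n-i}\sigma^{-1}=R_{n+i}$ and vice versa. Since the nodes $n-i$ and $n+i$ are non-adjacent in $\ddA_{2n-1}$ (the distance is $2i\ge 2$), the reflections $R_{n-i}$ and $R_{n+i}$ commute, so conjugation by $\sigma$ fixes their product. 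This gives $\sigma\phi(r_i)=\phi(r_i)\sigma$ on all generators, and (i) follows.

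For (ii), let $x\in\R^{2n}_\sigma$. The case $w=r_0$ is immediate because $\beta_0=\alpha_n$ and the two reflections coincide as maps on $\R^{2n}$. For $w=r_i$ with $i>0$, I would compute both sides explicitly. On the one hand, using the orthogonality $\langle\alpha_{n-i},\alpha_{n+i}\rangle=0$,
\begin{equation*}
R_{n-i}R_{n+i}x=x-\langle x,\alpha_{n+i}\rangle\alpha_{n+i}-\langle x,\alpha_{n-i}\rangle\alpha_{n-i}.
\end{equation*}
Since $\sigma(x)=x$ and $\sigma$ is an isometry that interchanges $\alpha_{n-i}$ and $\alpha_{n+i}$, the two inner products are equal to a common value $c$. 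On the other hand, $\beta_i=\tfrac12(\alpha_{n-i}+\alpha_{n+i})$ has squared norm $1$ and $\langle x,\beta_i\rangle=c$, so the reflection formula gives $r_i x=x-2c\,\beta_i=x-c(\alpha_{n-i}+\alpha_{n+i})$. Comparing, $\phi(r_i)x=r_i x$.

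Finally, I would extend to arbitrary $w=r_{i_1}\cdots r_{i_k}\in W(\ddC_n)$ by induction on $k$: using (i), if $x\in\R^{2n}_\sigma$ then $\phi(r_{i_k})x\in\R^{2n}_\sigma$ as well (it is fixed by $\sigma$), so the inductive hypothesis applied to $r_{i_1}\cdots r_{i_{k-1}}$ completes the step. The main obstacle is the bookkeeping in (ii): one must keep track of the differing normalisations (squared norm $2$ for the simple root $\beta_0$ versus squared norm $1$ for the $\beta_i$ with $i>0$) and exploit $\sigma$-invariance of $x$ to identify the two inner products, but no deeper input is required.
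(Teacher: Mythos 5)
Your proposal is correct and follows essentially the same route as the paper: both prove (i) by conjugating the generators $R_n$ and $R_{n-i}R_{n+i}$ by $\sigma$ (using orthogonality/commutativity of $R_{n-i}$ and $R_{n+i}$), and prove (ii) by the same explicit reflection computation exploiting $(x,\alpha_{n-i})=(x,\alpha_{n+i})$ for $\sigma$-fixed $x$ and the norm-$1$ normalisation of $\beta_i$, then extend multiplicatively. The only difference is that you spell out the reduction to generators for (ii) slightly more carefully than the paper does, which is harmless.
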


\begin{proof}
As $\sigma R_i \sigma^{-1} = R_{2n-i}$ for each $i\in\{1,\ldots,2n-1\}$, we know that $\sigma R_n \sigma^{-1} = R_{n}$, and $\sigma R_{n+j}R_{n-j}
\sigma^{-1} = R_{n-j} R_{n+j}$, so $\sigma$ commutes with $\phi(r_j)$ for
each $j\in\{0,\ldots,n-1\}$. As $r_0,\ldots,r_{n-1}$ generate $W(\ddC_n)$,
this implies that $\sigma$ commutes with each $\phi(w)$ for $w\in
W(\ddC_n)$.  Hence (i).

Also for (ii) it suffices to verify the statement for $w$ a simple reflection.
Let $x\in\R^{2n}_\sigma$. For $j=0$, we have $\phi(r_0)x =R_nx = r_0 x$ and
for $j=1,\ldots,n-1$, as $(x,\alp_{n-j}) = (x,\alp_{n+j})$,
\begin{eqnarray*}
\phi(r_j)x &=& R_{n-j}R_{n+j}x =
 R_{n-j}(x-(x,\alp_{n+j})\alp_{n+j}) \\
&=& x-(x,\alp_{n+j})\alp_{n+j} -( x,\alp_{n-j})\alp_{n-j}\\
&=& x-  (x,\alp_{n+j}+\alp_{n-j}) (\alp_{n+j}+\alp_{n-j})/2\\
&=&r_jx,
\end{eqnarray*}
as required.
\end{proof}

The following result is well known and gives the connection between the
restriction of $\phi$ to $W(\ddC_n)$ and $\fp$.

\begin{lemma}\label{lm:pfandphi}
The maps $\fp$ and $\phi$ are compatible in the following two ways.
\begin{enumerate}[(i)]
\item
$\fp(\phi(w)\alp) = w \fp(\alp)$  for each $w\in W(\ddC_n)$
and $\alp\in\Phi$.
\item
$\phi(r_{\beta}) = \prod_{\alp\in\fp^{-1}(\beta)}
R_{\alp}$
and
$\phi(e_{\beta}) = \prod_{\alp\in\fp^{-1}(\beta)}
E_{\alp}$
for each $\beta\in\Psi$.
Here, the set $\fp^{-1}(\beta)$ has cardinality 1 or 2, according
to $\beta\in W(\ddC_n)\beta_0$ or
$\beta\in W(\ddC_n)\beta_1$.
\end{enumerate}
\end{lemma}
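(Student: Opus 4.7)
The plan is to treat the two parts separately, with part (ii) reduced to the simple-root case by $W(\ddC_n)$-conjugation.

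Part (i) falls out of Lemma~\ref{lm:sigmaandphi}. From $\fp(x)=\tfrac{1}{2}(x+\sigma x)$ and the commutation $\sigma\phi(w)=\phi(w)\sigma$ in part (i) of that lemma, we have
\[
\fp(\phi(w)\alp)=\tfrac{1}{2}(\phi(w)\alp+\phi(w)\sigma\alp)=\phi(w)\fp(\alp),
\]
and since $\fp(\alp)\in\R^{2n}_\sigma$, part (ii) of the same lemma replaces the right-hand side by $w\fp(\alp)$, as required.

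For (ii) we first dispose of the simple roots. The identity $\sigma(\eps_i-\eps_j)=\eps_{2n+1-j}-\eps_{2n+1-i}$ shows that the $\sigma$-fixed positive roots of $\Phi$ are the $\eps_i-\eps_{2n+1-i}$ with $1\le i\le n$; the remaining positive roots come in $\sigma$-orbits of size $2$ and project to vectors of squared norm $1$ (by Definition~\ref{df:fp}). Consequently $\fp^{-1}(\beta_0)=\{\alp_n\}$, matching $\phi(r_0)=R_n=R_{\alp_n}$ and $\phi(e_0)=E_n=E_{\alp_n}$. For $0<i<n$, $\sigma(\alp_{n-i})=\alp_{n+i}$, so $\fp^{-1}(\beta_i)=\{\alp_{n-i},\alp_{n+i}\}$, an orthogonal pair, and the defining $\phi(r_i)=R_{n-i}R_{n+i}$, $\phi(e_i)=E_{n-i}E_{n+i}$ are exactly the two-fold products.

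To pass to a general $\beta\in\Psi$, we write $\beta=w\beta_i$ for some $w\in W(\ddC_n)$; then $r_\beta=wr_{\beta_i}w^{-1}$ and, analogously, $e_\beta=we_{\beta_i}w^{-1}$. Part (i) applied pointwise to $\fp^{-1}(\beta_i)$ yields $\fp^{-1}(\beta)=\phi(w)\cdot\fp^{-1}(\beta_i)$, and since $\phi(w)\in W(\ddA_{2n-1})$, conjugation by $\phi(w)$ sends $R_\alp\mapsto R_{\phi(w)\alp}$ and $E_\alp\mapsto E_{\phi(w)\alp}$ (the latter by \cite[Lemma~4.2]{CFW2008}). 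Applying $\phi$ to $r_\beta$ or $e_\beta$ therefore conjugates the simple-root product of the previous paragraph into the desired product over $\fp^{-1}(\beta)$; the two factors in a size-$2$ fiber still commute after conjugation, since $\phi(w)$ preserves the inner product.

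The main obstacle we foresee is the well-definedness of $e_\beta$ for non-simple $\beta\in\Psi$: the element $we_{\beta_i}w^{-1}$ must be shown independent of the choice of $w$ with $\beta=w\beta_i$. We expect this to follow from a type-$\ddC_n$ analogue of \cite[Lemma~4.2]{CFW2008}; once in place, the rest is elementary bookkeeping around the two $W(\ddC_n)$-orbits on $\Psi^+$, namely the long roots $W(\ddC_n)\beta_0$ (with singleton fibers) and the short roots $W(\ddC_n)\beta_1$ (with two-element fibers).
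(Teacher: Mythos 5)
Your proof is correct and follows essentially the same route as the paper: part (i) by the two halves of Lemma~\ref{lm:sigmaandphi}, and part (ii) by verifying the simple roots from the definition of $\phi$ and then conjugating by $\phi(w)$, using (i) to identify $\fp^{-1}(w\beta_i)$ with $\phi(w)\fp^{-1}(\beta_i)$. The well-definedness of $e_\beta$ that you flag as the remaining obstacle is exactly what the paper supplies in Lemma~\ref{lm:SimpleRootRels}, so your argument is complete modulo that reference.
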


\begin{proof}
By the definition of $\fp$ and Lemma \ref{lm:sigmaandphi},
\begin{eqnarray*}
\fp(\phi(w)\alp) &=&
(\phi(w)\alp + \sigma(\phi(w)\alp))/2
= (\phi(w)\alp + \phi(w)\sigma(\alp))/2\\
&=& \phi(w) (\alp + \sigma(\alp))/2
= \phi(w) (\fp(\alp))\\
 &=& w (\fp(\alp)),
\end{eqnarray*}
which proves (i).

As for (ii), let $\beta\in\Psi$. The proofs for $r_\beta$ and $e_\beta$ are
almost identical, so we only give the former.
If $\beta$ is simple, the equality
$\phi(r_{\beta}) = \prod_{\alp\in\fp^{-1}(\beta)}
R_{\alp}$
holds by definition of $\phi$.
Suppose $\beta = w\beta_0$ for some $w\in W(\ddC_n)$. Then
$\beta = \phi(w)\beta_0$ by Lemma \ref{lm:sigmaandphi}(ii) and so, by (i)
of the same lemma,
\begin{eqnarray*}
\fp(\beta) &=& \fp(\phi(w)\beta_0) = w\fp(\beta_0) =
 w\beta_0 = \beta.
\end{eqnarray*}
Now
$\fp^{-1}(\beta) = \{\beta\}$ and
\begin{eqnarray*}
\phi(r_\beta) &=& \phi(wr_0w^{-1}) = \phi(w) \phi(r_0) \phi(w)^{-1}
 = \phi(w) R_n \phi(w)^{-1} \\
&=&  R_{\phi(w)\alp_n} =
R_{w\alp_n} = R_\beta = \prod_{\alp\in\fp^{-1}\beta}R_\alp.
\\
\end{eqnarray*}
As $\Psi$ is the union of the two $W(\ddC_n)$-orbits with representatives
$\beta_0$ and $\beta_1$, it only remains to consider $\beta = w\beta_1$ with
$w\in W(\ddC_n)$.
Take $\alp\in\fp^{-1}(\beta)$.
Then
$\fp( \phi(w)\alp_{n-1})  = w \fp( \alp_{n-1}) =
w\beta_1 = \beta = \fp(\alp)$, so, in view of Lemma \ref{lm:sigmaandphi}(i),
$\fp^{-1}(\beta) = \{\phi(w)\alp_{n-1},\phi(w)\alp_{n+1}\}$.
We find
\begin{eqnarray*}
\phi(r_\beta) &=& \phi(wr_1w^{-1}) =
\phi(w) \phi(r_1) \phi(w)^{-1}
=\phi(w) R_{n-1}R_{n+1} \phi(w)^{-1}\\
&=&
R_{\phi(w)\alp_{n-1}}R_{\phi(w)\alp_{n+1}}
= \prod_{\alp\in\fp^{-1}(\beta)}R_{\alp},
\end{eqnarray*}
which establishes (ii).
\end{proof}

We next consider particular sets of mutually orthogonal positive roots in
$\Psi$, and relate them to symmetric admissible sets in $\cA$.

\begin{defn}\label{df:admissible}
Denote by $\cB'$ the collection of all sets of mutually orthogonal roots in
$\Psi^+$ and by $\cA_\sigma$ the subset of $\sigma$-invariant elements of
$\cA$.  As $\fp$ sends positive roots of $\Phi$ to positive roots of $\Psi$,
it induces a map $\fp : \cA_\sigma\to\cB'$ given by $\fp(B) =
\left\{\fp(\alpha)\mid \alpha\in B\right\}$ for $B\in \cA_\sigma$.  An
element of $\cB'$ will be called {\em admissible} if it lies in the image of
$\fp$. The set of all admissible elements of $\cB'$ will be denoted $\cB$.
\end{defn}

\begin{rem}\label{rem:notalladm}
Not all sets of mutually orthogonal roots in $\Psi^+$ are admissible. For
instance $Y = \{\beta_{1}, \beta_{1}+\beta_0\}$ (two mutually orthogonal
short roots) belongs to $\cB'$ (for $n=2$) but not to $\cB$. For, if $X\in
\cA_\sigma$ would be such that $\fp(X) = Y$, then $X$ should contain
$\alpha_1$ and $\alpha_3$ as well as $\alpha_1+\alpha_2$ and
$\alpha_2+\alpha_3$; but these roots are not mutually orthogonal.  On the
other hand, for $n\ge4$, the unordered pair $\{\beta_1,\beta_3\}$ from
another $W(\ddC_n)$-orbit of mutually orthogonal short roots, is the image
of the admissible set
$\{\alpha_{n-1},\alpha_{n+1},\alpha_{n-3},\alpha_{n+3}\}$ and so belongs to
$\cB$.

Also
$\{\beta_{0}, 2\beta_{1}+\beta_0\}$ (two mutually orthogonal
long roots) does belong to $\cB$ (for $n=2$) as it coincides with
$\fp(X)$, where $X = \{\a_n,\a_{n-1}+\a_n+\a_{n+1}\}$.
\end{rem}

\begin{prop}\label{3.5}
The monoid $\BrM(\ddC_n)$
acts on $\cA_\sigma$ under
the composition of the above action and $\phi$.
\end{prop}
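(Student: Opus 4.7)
The plan is to invoke the fact that $\phi$ is a monoid homomorphism (Lemma~\ref{lm:phiIsHomo}) in order to reduce the proposition to two claims: first, that $\phi$ sends $\BrM(\ddC_n)$ into the $\sigma$-fixed submonoid of $\BrM(\ddA_{2n-1})$; second, that every $\sigma$-fixed monomial of $\BrM(\ddA_{2n-1})$ preserves $\cA_\sigma$ under the action of Section~\ref{sect:typeA}. Granting both, the prescription $m\cdot B := \phi(m)\,B$ will define the required action of $\BrM(\ddC_n)$ on $\cA_\sigma$, with the monoid axioms inherited from the $\BrM(\ddA_{2n-1})$-action and the homomorphism property of $\phi$.

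For the first claim I would inspect the generators. The equalities $\sigma(R_n)=R_n$ and $\sigma(E_n)=E_n$ are immediate, giving $\sigma$-fixity of $\phi(r_0)$ and $\phi(e_0)$. For $0<i<n$, the indices $n-i$ and $n+i$ differ by $2i\ge 2$, hence are non-adjacent in $\ddA_{2n-1}$; so $R_{n-i}$ commutes with $R_{n+i}$ (and similarly for the $E$'s), whence $\sigma(\phi(r_i))=R_{n+i}R_{n-i}=R_{n-i}R_{n+i}=\phi(r_i)$, and analogously $\sigma(\phi(e_i))=\phi(e_i)$. Since $\sigma$ is an algebra automorphism, any product of $\sigma$-fixed monomials is again $\sigma$-fixed, so $\sigma(\phi(m))=\phi(m)$ for every $m\in\BrM(\ddC_n)$.

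For the second claim, the key is the equivariance
\[
\sigma(aB) \;=\; \sigma(a)\,\sigma(B), \qquad a\in\BrM(\ddA_{2n-1}),\ B\in\cA,
\]
where $\sigma$ acts on $\cA$ via the induced permutation of $\Phi^+$. By multiplicativity it suffices to check this on the generators. The $R_i$-case is just the standard equivariance of the Coxeter representation, while the $E_i$-case splits into the three subcases of the defining formula for the $E_i$-action; each is routine, relying only on the facts that $\sigma$ is an isometry carrying $\alpha_i$ to $\alpha_{2n-i}$ and preserving both membership and orthogonality. Given the equivariance, for $B\in\cA_\sigma$ and $a$ a $\sigma$-fixed element one has $\sigma(aB)=\sigma(a)\sigma(B)=aB$, so $aB\in\cA_\sigma$. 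The main obstacle is this case-by-case verification of $\sigma$-equivariance of the $E_i$-action, as this is the only point where the piecewise definition of the action interacts with the symmetry $\sigma$.
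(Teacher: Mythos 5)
Your proposal is correct and follows essentially the same route as the paper: establish the equivariance $\sigma(aB)=\sigma(a)\sigma(B)$, deduce that $\sigma$-fixed monomials preserve $\cA_\sigma$, and combine this with the observation that $\phi(\BrM(\ddC_n))\subseteq \BrM(\ddA_{2n-1})_\sigma$ (which the paper records just before Lemma~\ref{lm:phiIsHomo} by noting that $R_n$, $R_iR_{2n-i}$, $E_n$, $E_iE_{2n-i}$ are $\sigma$-fixed). You simply spell out the generator-by-generator checks that the paper dismisses as "easy to see."
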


\begin{proof} It suffices
to prove that $\cA_\sigma$ is closed under the action of $\phi(\BrM(\ddC_n))$.
It is easy to see that $\sigma(a)\sigma(B)=\sigma(aB)$, for $a\in
\BrM(\ddA_{2n-1})$ and $B\in \cA$. Consequently,
if $a\in \BrM(\ddA_{2n-1})_\sigma$ and $B\in \cA_\sigma$, then it follows that 
$aB=\sigma(a)\sigma(B)=\sigma(aB)$. This shows $aB\in \cA_\sigma$.  As
$\phi(\BrM(\ddC_n))\subseteq \BrM(\ddA_{2n-1})_\sigma$, the proposition
follows.
\end{proof}

\begin{prop}\label{lm:fp_injective}
The map $\fp: \cA_\sigma\to\cB$ is bijective
and $W(\ddC_n)$-equivariant, so $\fp(\phi(w)X) =
w\fp(X)$ for $X\in\cA_\sigma$ and $w\in W(\ddC_n)$.
\end{prop}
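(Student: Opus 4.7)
The plan is to split the proposition into its three assertions—surjectivity, injectivity, and $W(\ddC_n)$-equivariance—and handle each in turn.

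Surjectivity is immediate from Definition~\ref{df:admissible}, since $\cB$ is \emph{defined} as the image $\fp(\cA_\sigma)$. So the main content is injectivity.

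For injectivity, I would argue that every $X\in\cA_\sigma$ is recoverable from $\fp(X)$ via the formula
\[
X \;=\; \bigcup_{\beta\in\fp(X)} \bigl(\fp^{-1}(\beta)\cap\Phi^+\bigr).
\]
The non-trivial inclusion is ``$\supseteq$''. Given $\beta\in\fp(X)$, pick $\alpha_0\in X$ with $\fp(\alpha_0)=\beta$. By the description of $\fp$ in Definition~\ref{df:fp}, either $\sigma(\alpha_0)=\alpha_0$, in which case $\fp^{-1}(\beta)\cap\Phi^+=\{\alpha_0\}\subseteq X$, or $\sigma(\alpha_0)\neq\alpha_0$, in which case $\fp^{-1}(\beta)\cap\Phi^+=\{\alpha_0,\sigma(\alpha_0)\}$ and the $\sigma$-invariance of $X$ forces $\sigma(\alpha_0)\in X$ as well. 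The reverse inclusion ``$\subseteq$'' is immediate since every $\alpha\in X$ lies in $\fp^{-1}(\fp(\alpha))\cap\Phi^+$. Thus $X$ is determined by $\fp(X)$, establishing injectivity.

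For equivariance, the key input is Lemma~\ref{lm:pfandphi}(i), which gives $\fp(\phi(w)\alpha)=w\fp(\alpha)$ for every $\alpha\in\Phi$ and $w\in W(\ddC_n)$. Because $\fp$ is linear it commutes with negation, and since simple roots of $\Phi$ map under $\fp$ to simple roots of $\Psi$, one checks that $\fp(\Phi^+)\subseteq\Psi^+$, whence $\fp(|\gamma|)=|\fp(\gamma)|$ for every $\gamma\in\Phi$. The natural action on admissible sets (via $\phi(w)$ on $\cA_\sigma$, and via $w$ on $\cB$) is pointwise application followed by taking positive representatives, so
\[
\fp(\phi(w)X) \;=\; \bigl\{\fp(|\phi(w)\alpha|)\mid \alpha\in X\bigr\} \;=\; \bigl\{|w\fp(\alpha)|\mid\alpha\in X\bigr\} \;=\; w\,\fp(X).
\]

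The main obstacle is keeping the bookkeeping straight in the equivariance check: one must verify both that $\phi(w)X$ remains $\sigma$-invariant (already guaranteed by Proposition~\ref{3.5}) and that taking positive representatives is compatible with $\fp$. Injectivity, by contrast, hinges on a single clean observation about $\sigma$-closures of preimage fibres.
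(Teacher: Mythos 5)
Your proposal is correct and follows essentially the same route as the paper: surjectivity by definition of $\cB$, injectivity by recovering $X$ as the $\sigma$-closed full preimage $\{\alpha\in\Phi\mid \fp(\alpha)\in \fp(X)\}$, and equivariance by applying Lemma \ref{lm:pfandphi}(i) pointwise together with Proposition \ref{3.5}. Your extra care about taking positive representatives is a minor refinement the paper leaves implicit, not a different argument.
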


\begin{proof}
The map $\fp$ is surjective by definition of $\cB$.
Let $Y\in\cB$ and $X\in\fp^{-1}(Y)$.
If $\beta \in Y$, then there is $\alp\in X$ such that
$\beta=\fp(\alp)$. As $X\in\cA_\sigma$, it follows that
$\sigma\alp\in X$, so $X = \{\alp\in\Phi\mid \fp(\alp) \in Y\}$
is uniquely determined by $Y$. This shows that $\fp$ is injective.

Finally, if in addition, $w\in W(\ddC_n)$, then
$\phi(w)X\in\cA_\sigma$ by Proposition \ref{3.5}, and,
by Lemma \ref{lm:pfandphi},
\begin{eqnarray*}
\fp(\phi(w)X)
&=& \{\fp(\phi(w)\alp)\mid\alp\in X\}
= w \{\fp(\alp)\mid\alp\in X\}
= w \fp(X).
\end{eqnarray*}
\end{proof}

\begin{lemma}\label{lm:SimpleRootRels}
Let $i$ and $j$ be nodes of the Dynkin diagram $\ddC_{n}$.
If $w\in W(\ddC_n)$ satisfies $w\beta_i=\beta_j$, then
$we_iw^{-1}=e_j$.
\end{lemma}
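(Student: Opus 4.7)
The plan is to induct on the Coxeter length $\ell(w)$ in $W(\ddC_n)$. The base $\ell(w)=0$ is immediate, since then $\beta_i=\beta_j$ forces $i=j$. For the inductive step, choose any reduced right decomposition $w=w'r_k$, so that $\ell(w')=\ell(w)-1$ and $w\beta_k<0$; since $w\beta_i=\beta_j>0$, automatically $k\neq i$.

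\emph{Case $k\not\sim i$ in $\ddC_n$.} Then $r_k\beta_i=\beta_i$, so $w'\beta_i=\beta_j$ and the inductive hypothesis gives $w'e_i(w')^{-1}=e_j$. Since $r_k$ commutes with $e_i$ by \eqref{0.1.8}, one concludes $we_iw^{-1}=w'r_ke_ir_k(w')^{-1}=w'e_i(w')^{-1}=e_j$.

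\emph{Case $k\sim i$.} Writing $r_k\beta_i=\beta_i+c\beta_k$ with $c\in\{1,2\}$, the hypothesis becomes
\[
w'\beta_i+c\,w'\beta_k=\beta_j,
\]
and reducedness of $w=w'r_k$ gives $w'\beta_k>0$. The key observation is that $w'\beta_i<0$: otherwise the simple root $\beta_j$ would be a nontrivial sum of two positive roots, contradicting the minimality of simple roots in the positive-root partial order. In the simply-laced subcase $i,k\ge 1$, relation \eqref{0.1.15} supplies the swap $r_ke_ir_k=r_ie_kr_i$, so setting $w'':=w'r_i$ one gets $we_iw^{-1}=w''e_k(w'')^{-1}$. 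A direct computation yields $w''\beta_k=w'\beta_k+w'\beta_i=\beta_j$, and the negativity of $w'\beta_i$ implies $\ell(w'')=\ell(w)-2$; induction then finishes the case.

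This leaves the mixed subcases $\{i,k\}=\{0,1\}$, where no symmetric analogue of \eqref{0.1.15} appears among the defining relations. When $i=0$, the hypothesis forces $j=0$ (the unique long simple root), so $w$ stabilises $\beta_0$; an elementary verification with the signed-permutation realisation gives $\mathrm{Stab}(\beta_0)=\langle r_2,\ldots,r_{n-1},\ r_1r_0r_1\rangle$, and each generator centralises $e_0$ by \eqref{0.1.8} or \eqref{0.1.20}. When $i=1$, $k=0$, the identities \eqref{0.1.14} and \eqref{0.1.17} combine to give $r_1r_0e_1r_0r_1=r_0e_1r_0$; writing $w=w''r_1r_0$ with $\ell(w'')=\ell(w)-2$ (thanks to the same minimality sign argument) yields $we_1w^{-1}=w''(r_0e_1r_0)(w'')^{-1}$, and this is reduced to an already-handled case by juggling \eqref{0.1.19} with the inductive hypothesis. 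The main obstacle is precisely this mixed $\{0,1\}$ subcase: one must substitute the clean swap \eqref{0.1.15} with a careful combination of the auxiliary identities \eqref{0.1.14}, \eqref{0.1.17}, \eqref{0.1.19}, \eqref{0.1.20}, whereas the rest of the induction is routine.
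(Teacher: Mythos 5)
Your induction on $\ell(w)$ is a genuinely different route from the paper's. The paper argues via orbits and stabilizers: it first notes that $w\beta_i=\beta_j$ with $i\ne j$ forces $i,j>0$, reduces the claim to showing that the stabilizers of $\beta_0$ and $\beta_1$ centralize $e_0$ and $e_1$, and then cites known generating sets for those stabilizers ($r_0$, $r_1r_0r_1$, $r_2,\dots,r_{n-1}$ for $\beta_0$; $r_1$, $r_0r_1r_0$, $y_3$, $r_3,\dots,r_{n-1}$ for $\beta_1$), checking each generator against the defining relations. Your length induction with the exchange condition replaces the appeal to stabilizer generators of the short simple roots by the swap relation (\ref{0.1.15}), which makes that part more self-contained; for $\beta_0$ you still fall back on the stabilizer, exactly as the paper does (and your generating set, which omits $r_0$ since $r_0\beta_0=-\beta_0$, is the exact vector stabilizer). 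The sign argument --- $w'\beta_i<0$ because a simple root cannot be a positive integral combination of two positive roots --- and the resulting length drops are correct throughout, as is the identity $r_1r_0e_1r_0r_1=r_0e_1r_0$ obtained from (\ref{0.1.14}) and (\ref{0.1.17}).

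The one place the write-up does not actually prove what it asserts is the conclusion of the subcase $i=1$, $k=0$. Having reached $we_1w^{-1}=w''(r_0e_1r_0)(w'')^{-1}$ with $\ell(w'')=\ell(w)-2$, the phrase ``juggling (\ref{0.1.19}) with the inductive hypothesis'' is not an argument, and (\ref{0.1.19}) --- a relation involving $e_0$ --- plays no role here. The clean finish is: rewrite $w''(r_0e_1r_0)(w'')^{-1}=(w''r_0)\,e_1\,(w''r_0)^{-1}$, observe that $(w''r_0)\beta_1=w''(\beta_0+\beta_1)=w''r_1r_0\beta_1=w\beta_1=\beta_j$ (using $r_1(\beta_0+\beta_1)=\beta_0+\beta_1$), and note $\ell(w''r_0)\le\ell(w'')+1=\ell(w)-1$; the inductive hypothesis applied to $w''r_0$ with the pair $(1,j)$ then yields $e_j$ at once. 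With that substitution your argument is complete and correct.
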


\begin{proof}
Observe that $w\b_i = \b_j$ only holds for
distinct $i$ and $j$ if $i,j>0$, in which case the existence of such a $w$
is a direct consequence of known results on Coxeter groups.
The full statement then follows from the fact that all generators
of the stabilizer of
$\b_i$ in $W(\ddC_n)$ also stabilize $e_i$, which we prove now.

Suppose  $i>0$.
As
\begin{eqnarray*}r_1e_1r_1&\overset{(\ref{0.1.4})}{=}&e_1,\\
r_0(r_1r_0e_1r_0r_1)r_0&\overset{(\ref{0.1.14})+(\ref{0.1.17})}{=}&r_0r_0e_1 r_0r_0=e_1,\\
y_3 e_1 y_3&\overset{\ref{X_n}}{=}&e_1y_3 y_3=e_1,\\
r_ie_1r_i&\overset{(\ref{0.1.9})}{=}&e_1r_i r_i=e_1,\, \mbox{for}\,\, i>3,
\end{eqnarray*}
the elements $r_1$, $r_0r_1r_0$, $y_{3}$, $r_3$,$\ldots$, $r_{n-1}$
stabilize $e_1$. But these elements are known to generate the full
stabilizer
in $W(\ddC_n)$ of $\beta_1$,
so $w\beta_1=\beta_1$ implies
$we_1w^{-1}=e_1$ for every $w\in W(\ddC_n)$.

If one of $i$ and $j$ is $0$, then they both are, as $(we_iw^{-1})^2 =
\delta^k we_iw^{-1}$, where $k = 2$ if $i>0$ and $k=1$ otherwise
(see (\ref{0.1.5}) and (\ref{0.1.6})).

It is known that the stabilizer of $\beta_0$ is generated by $r_0$,
$r_1r_0r_1$, and $r_i$ $(i=2,\dots, n-1)$.  These elements also centralize
$e_0$:
\begin{eqnarray*} r_0e_0r_0&\overset{(\ref{0.1.4})}{=}&e_0,\\
 r_1r_0r_1e_0r_1r_0r_1&\overset{(\ref{0.1.20})}{=}&e_0,\\
 r_ie_0r_i&\overset{(\ref{0.1.8})}{=}&e_0 \qquad \mbox{for}\,\, i>1.
\end{eqnarray*}
This ends the proof of the lemma.
\end{proof}

Consider a positive root $\beta$ and a node $i$ of type $\ddC_n$. If there
exists $w\in W$ such that $w\beta_i=\beta$, then we can define the element
$e_{\beta}$ in $\BrM(\ddC_n)$ by
$$e_{\beta}=we_iw^{-1}.$$
The above lemma implies that $e_\beta$ is well defined. In general,
$$we_{\beta}w^{-1}=e_{w\beta},$$ for $w\in W(\ddC_n)$ and $\beta$ a root
of $W(\ddC_n)$. Note that $e_{\beta}=e_{-\beta}$ in view of (\ref{0.1.4}).

In this perspective, we can reinterpret the element $y_i$ of (\ref{df:yn})
as $r_\gamma$ and $z_i$ of (\ref{df:zn}) as $e_\gamma$, where $\gamma =
\beta_0+2\beta_1+\cdots +2\beta_{i-1}$.
Proposition \ref{X_n} shows that, for each $i\in\{0,\ldots,n-1\}$,
\begin{eqnarray}\label{eq:bi}
b_i &=&\prod_{k=1}^{i} z_k 
\end{eqnarray}
is well defined. 
The  admissible set
\begin{eqnarray}\label{eq:Bi}
B_i &=&\fp^{-1} \left(
\{\b_0,\beta_0+2\beta_1,\ldots,\beta_0+2\beta_1+\cdots
+2\beta_{i-1}
\}\right)
\end{eqnarray}
in $\cA_\sigma$ is both the top and the bottom of $\phi(b_i)$.
In other
words, the symmetric diagram of $b_i$ has horizontal strands from
$(n+1-j,1)$ to $(n+j,1)$ and from $(n+1-j,0)$ to $(n+j,0)$ for each
$j\in\{1,\ldots,i\}$.  This is the special case $p=i$ of the top
displayed in Figure \ref{fig:Bip}.
Later, below (\ref{eq:bpip}), we will use this fact. 
The $B_i$ $(i=0,\ldots,n)$ are a complete set of $W(\ddA_m)$-orbit
representatives in~$\cA$. Moreover, $\phi(b_i)$ has height
$0$.

\begin{prop} \label{lem:rel}
Let $\beta$ and $\gamma$ be positive roots of $\Psi$.
\begin{enumerate}[(i)]
\item 
$e_\beta r_\beta =r_\beta e_\beta=e_\beta$,
$e_\beta^2=\delta^2e_\beta$  if $\beta$ is short,
and $e_\beta^2=\delta e_\beta$  if $\beta$ is long.

\item 
If $(\beta,\gamma)=\pm 1$  and  $\beta$ and $\gamma$ are short,
then
$e_\beta r_\gamma e_\beta=e_\beta$,
$r_\beta r_\gamma e_\beta=e_\gamma r_\beta r_\gamma=e_\gamma e_\beta$,
and $e_\beta e_\gamma e_\beta=e_\beta$.

\item
If $(\beta,\gamma)=\pm 1$  with $\beta$ short and $\gamma$ long, then the
equations (\ref{0.1.11})--(\ref{0.1.18}) and (\ref{4.1.2})--(\ref{4.1.5})
still hold with the subscripts $1$ and $0$
replaced by $\beta$ and $\gamma$, respectively.
\item 
If $(\beta,\gamma)=0$ and $\beta$ and $\gamma$ are both long,
then $e_\beta e_\gamma =e_\beta e_\gamma$.
\item 
If $(\beta,\gamma)=0$  and $\beta$ and $\gamma$ are both short,
and there exists a long positive  root $\a$
such that $\beta=\gamma+\a$ or $\beta=\gamma-\a$, then
$e_\beta e_\gamma=\delta r_{\a} e_{\gamma}$ or
       $ e_\gamma e_\beta=\delta r_{\a} e_\beta$.
In each case, $e_\beta e_\gamma\ne e_\gamma e_\beta $.
\end{enumerate}
\end{prop}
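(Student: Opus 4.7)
The plan is to reduce each of the five assertions, by conjugating with $W(\ddC_n)$, to a standard configuration on simple roots or on roots of a small subsystem of type $\ddA_2$, $\ddC_2$, or $(\ddA_1)^2$, and then to verify the resulting identities using Definition \ref{0.1}, the equations (\ref{4.1.2})--(\ref{4.1.5}), and Proposition \ref{X_n}. The reduction is legitimate because $e_{w\beta}=we_\beta w^{-1}$ and $r_{w\beta}=wr_\beta w^{-1}$ for all $w\in W(\ddC_n)$ (as noted after Lemma \ref{lm:SimpleRootRels}), and because $e_{-\beta}=e_\beta$ and $r_{-\beta}=r_\beta$, so sign flips within a Weyl orbit are harmless.

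For (i), write $\beta=w\beta_i$ with $i\in\{0,\ldots,n-1\}$ and conjugate the defining relations (\ref{0.1.3})--(\ref{0.1.6}) by $w$, distinguishing the short ($i>0$) and long ($i=0$) cases. For (ii), the hypothesis identifies $\beta,\gamma$ as generating a subsystem of type $\ddA_2$ consisting of short roots (requiring $n\ge 3$; the lower-rank case is vacuous); by $W(\ddC_n)$-transitivity on such ordered pairs we take $(\beta,\gamma)=(\beta_1,\beta_2)$, whereupon the three identities concern only $r_1,r_2,e_1,e_2$. These generators satisfy the $\ddA_2$-portion of Definition \ref{0.1}, and the three asserted identities are exactly (\ref{0.1.13}) with $(i,j)=(2,1)$, its $\op$-dual, and the analogues of (\ref{3.1.3}) and (\ref{3.1.5}) that were established in \cite{CFW2008} for type $\ddA$.

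For (iii), $\beta$ and $\gamma$ span a $\ddC_2$-subsystem and after conjugation we take $(\beta,\gamma)=(\beta_1,\beta_0)$, so the asserted relations become the defining ones of Definition \ref{0.1} together with (\ref{4.1.2})--(\ref{4.1.5}) verbatim. For (iv), the long roots of $\Psi$ form an $(\ddA_1)^n$-subsystem on which $W(\ddC_n)$ acts by all signed permutations of the orthogonal axes, so we take $(\beta,\gamma)=(\beta_0,\beta_0+2\beta_1)$; then $e_\beta=z_1$ and, since $\beta_0+2\beta_1=r_{\beta_1}\beta_0$, $e_\gamma=r_1e_0r_1=z_2$ in the notation of (\ref{df:zn}), and the commutation is Proposition \ref{X_n}(ii). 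For (v), the hypothesis forces $\beta,\gamma,\alpha$ into a $\ddC_2$-subsystem, and we take $\alpha=\beta_0$, $\beta=\beta_1$, $\gamma=\beta_1+\beta_0=r_{\beta_0}\beta_1$; then $e_\gamma=r_0e_1r_0$ and (\ref{0.1.12}) yields $e_\beta e_\gamma=e_1r_0e_1r_0=\delta e_1r_0=\delta r_0(r_0e_1r_0)=\delta r_\alpha e_\gamma$, and symmetrically $e_\gamma e_\beta=r_0(e_1r_0e_1)=\delta r_0e_1=\delta r_\alpha e_\beta$. The non-commutativity reduces to $e_\beta\ne e_\gamma$, which is confirmed by applying $\phi$ of Lemma \ref{lm:phiIsHomo}: the images $\phi(e_\beta)=E_{n-1}E_{n+1}$ and $\phi(e_\gamma)=R_nE_{n-1}E_{n+1}R_n$ are visibly distinct symmetric Brauer diagrams in $\SBr(\ddA_{2n-1})$.

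The principal obstacle is the Weyl-transitivity step in parts (ii) and (iv): one must verify explicitly that $W(\ddC_n)$ acts transitively on the relevant ordered pairs of positive roots (with sign flips allowed via $e_{-\beta}=e_\beta$), so that the identities for arbitrary $\beta,\gamma$ descend genuinely from the simple-root case. This is routine root-system combinatorics but deserves an explicit check for each orbit type that arises, alongside a separate verification of the vacuous low-rank cases.
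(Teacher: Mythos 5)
Your proposal is correct and follows essentially the same route as the paper: reduce each part by $W(\ddC_n)$-conjugation (justified via Lemma \ref{lm:SimpleRootRels} and $e_{-\beta}=e_\beta$) to a configuration of simple roots, verify the resulting identities from Definition \ref{0.1}, the derived relations, and Proposition \ref{X_n}, and settle the inequality in (v) by distinguishing the $\phi$-images as symmetric Brauer diagrams. Your treatment is in fact more explicit than the paper's (which only details case (v)), and your flagged transitivity checks are exactly the "reduction to simple cases" the paper leaves implicit.
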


\begin{proof}  The assertions
are easily proved after reduction to simple cases using Lemma
\ref{lm:SimpleRootRels}.
We illustrate the argument by treating (v) now in greater detail.

Up to an interchange of $\beta$ and $\gamma$, there exists $w\in
W(\ddC_n)$ such that $w\beta_0=\alpha$ and $w\beta_1=\gamma$ and $\beta =
\gamma+\a=r_{\a}\gamma$.  Now $e_\beta e_\gamma = r_\a e_\gamma r_\a
e_\gamma =w r_0e_1r_0e_1 w^{-1} \overset{(\ref{0.1.12})}{=} \delta w r_0e_1
w^{-1} = \delta r_\a e_\gamma$.  The inequality stated at the end of part (v) follows
from the inspection of symmetric Brauer diagrams in the image of $\phi$.
\end{proof}

As a consequence of Proposition \ref{lem:rel}(iv) and (\ref{0.1.9}),
the product of $e_\beta$, for $\beta$ running over the members of an
admissible set, does not depend on the order. Therefore,
for each $B\in\mathcal{B}$,
we may define

\begin{eqnarray}
\label{df:eB}
 e_{B}&=&\prod_{\beta\in B} e_{\beta},
\end{eqnarray}

This is very similar to the definition of $E_B$ in (\ref{eqn:EhatB}).  Part
(v) of the proposition shows that it is essential that $B$ be admissible for
a set $B$ of orthogonal roots to define a product as in (\ref{df:eB}).
There exists another $W(\ddC_n)$-orbit of pairs of mutually orthogonal positive
roots than those in part (v) of the previous proposition; these lead to admissible sets and behave well
in (\ref{df:eB}) in view of (\ref{0.1.9}).

As $W(\ddC_n)$ is a subgroup of the monoid $\BrM(\ddC_n)$, it also acts on
$\cA_n$ (cf.~Proposition \ref{3.5}). We will show that the admissible sets
defined below are orbit representatives for this action.

\begin{defn}\label{rem:B_{i,p}}
For $i$ and $p$ with $0\le p\le i\le n$ and $i-p$ even, write
\begin{eqnarray}\label{fip}
e_{i,p} &=& e_{p+1}e_{p+3}\cdots e_{i-1},
\end{eqnarray}
and
\begin{eqnarray}
B_{i,p}&=&
e_{p+1}e_{p+3}\cdots e_{i-1}B_i.
\nonumber
\end{eqnarray}
In addition, for $p'\in\N$ with $0\le p'\le i<n$ and 
$i-p'$ even, we write
\begin{eqnarray}
\label{eq:bpip}
b_{p,i,p'} & = & e_{i,p}b_ie_{i,p'},
\end{eqnarray}
where $b_i$ is as defined in (\ref{eq:bi}).
\end{defn}

\begin{figure}[h!]
\begin{center}
\includegraphics[width=.9\textwidth]{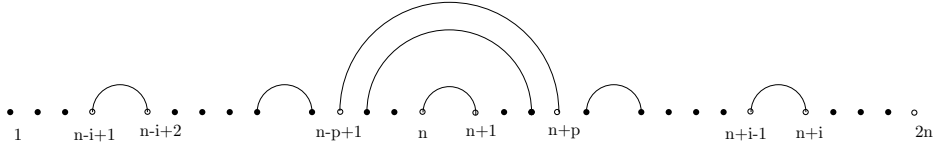}
\end{center}
\caption{The admissible set $B_{i,p}$.}
\label{fig:Bip}
\end{figure}

Observe that $B_{i,i}=B_i$.  The admissible set $B_{i,p}$ is pictured in
Figure \ref{fig:Bip} as the top of a Brauer diagram.

\begin{lemma}
\label{lm:BipAct}
If $0<p,p'<i$ with $i-p$ and $i-p'$ even, then
$\phi(b_{p,i,p'})$ is a diagram of height $0$
with top $B_{i,p}$ and bottom $B_{i,p'}$.
Moreover,
$\phi(b_{p,i,p'}) = a_{B_{i,p},B_{i,p'}}$.
\end{lemma}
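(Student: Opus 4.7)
My plan is to compute $\phi(b_{p,i,p'}) = \phi(e_{i,p})\phi(b_i)\phi(e_{i,p'})$ diagrammatically and then appeal to the uniqueness clause of Lemma \ref{lm:hetB} to identify the result with $a_{B_{i,p},B_{i,p'}}$.

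First I would unfold the building blocks. Because $\phi(z_k) = E_{\rho_k}$ for $\rho_k = \alpha_{n-k+1}+\cdots+\alpha_{n+k-1}$ and these roots are mutually orthogonal (their arcs are nested), the factors commute and $\phi(b_i) = \prod_{k=1}^i E_{\rho_k}$ is the height-$0$ diagram whose top and bottom are both the nested admissible set $B_i$, with $n-i$ parallel vertical strands on each outer flank. Likewise $\phi(e_{i,p}) = \prod_{j\in\{p+1,p+3,\ldots,i-1\}} E_{\alpha_{n-j}}E_{\alpha_{n+j}}$ is the height-$0$ clean diagram whose top and bottom are the set $T_{i,p}$ of outer simple-root arcs $\{n-j,n-j+1\}$ and $\{n+j,n+j+1\}$, with verticals at all other positions.

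Next I would stack $\phi(e_{i,p})$ above $\phi(b_i)$ and trace. The $p$ middle verticals of $\phi(e_{i,p})$ at positions $n-p+1,\ldots,n+p$ thread through the $p$ innermost nested arcs $\{n-k+1,n+k\}$ $(k=1,\ldots,p)$ of $\phi(b_i)$'s top, producing exactly these arcs at the top of the composite. Combined with the outer arc pairs inherited directly from $\phi(e_{i,p})$, the top becomes precisely $B_{i,p}$ as depicted in Figure \ref{fig:Bip}. At the interface, the $(i-p)/2$ outer arc pairs of $\phi(e_{i,p})$ close against the outer nested arc pairs of $\phi(b_i)$ into disjoint loops; no crossings are created, so the height stays $0$ and the bottom remains $B_i$. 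A mirror-image analysis for stacking $\phi(e_{i,p'})$ on the bottom gives bottom $B_{i,p'}$ while preserving height $0$. This verifies the first assertion.

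By the uniqueness statement in Lemma \ref{lm:hetB}, there is only one height-$0$ diagram in $\BrM(\ddA_{2n-1})$ with top $B_{i,p}$ and bottom $B_{i,p'}$, namely $a_{B_{i,p},B_{i,p'}}$; hence $\phi(b_{p,i,p'})=a_{B_{i,p},B_{i,p'}}$, proving the moreover part.

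The main obstacle will be the careful combinatorial bookkeeping at the two interfaces: one must verify that the middle verticals of $\phi(e_{i,p})$ thread exactly the $p$ innermost nested arcs of $\phi(b_i)$ (and not outer ones) and that the outer arc pairings close into loops rather than cross or reroute top/bottom endpoints. The hypothesis $0<p,p'<i$ is essential here, guaranteeing that the middle and outer sub-regions of $\phi(e_{i,p})$ align correctly with the nested structure of $\phi(b_i)$, and similarly for $\phi(e_{i,p'})$.
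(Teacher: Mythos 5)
Your overall strategy --- compute $\phi(e_{i,p})\phi(b_i)\phi(e_{i,p'})$ as an explicit stacking of diagrams, read off the top, bottom and height, and then invoke the uniqueness clause of Lemma \ref{lm:hetB} --- is the right one, and it is essentially an expanded version of the paper's one-line appeal to the monoid actions of Proposition \ref{3.5}. Your identification of $\phi(b_i)$ with the nested height-$0$ diagram $E_{B_i}$, of $\phi(e_{i,p})$ with the clean diagram on the simple-root arcs, and your bookkeeping of which arcs thread through which (producing top $B_{i,p}$ and bottom $B_{i,p'}$ with no crossings) are all correct.

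The gap is in the last step of your interface analysis. You correctly record that the $(i-p)/2$ outer arc pairs of $\phi(e_{i,p})$ close against the outer nested arcs of $\phi(b_i)$ into disjoint closed loops --- but then you discard those loops as if they were cost-free. By the multiplication rule for Brauer diagrams, \emph{each closed loop is replaced by a factor of $\delta$}. So what your computation actually establishes is
$$\phi(b_{p,i,p'}) \;=\; \delta^{(i-p)/2+(i-p')/2}\,a_{B_{i,p},B_{i,p'}},$$
a scalar multiple of a diagram rather than the diagram itself; the appeal to uniqueness in Lemma \ref{lm:hetB} then only identifies the diagram factor, not $\phi(b_{p,i,p'})$. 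A concrete check: for $n=i=3$, $p=p'=1$ one has $b_{1,3,1}=e_2z_1z_2z_3e_2=\delta^2e_0e_2$ by (\ref{ezze}), whence $\phi(b_{1,3,1})=\delta^2E_1E_3E_5=\delta^2a_{B_{3,1},B_{3,1}}$, not $a_{B_{3,1},B_{3,1}}$. To be fair, the lemma as printed carries the same imprecision, and the paper's own proof (which tracks only tops and bottoms via the monoid action) is equally silent about the scalar; but your write-up is internally inconsistent, since it exhibits the loops and then asserts a $\delta$-free identity. A careful proof must either carry the exponent $\tfrac{i-p}{2}+\tfrac{i-p'}{2}$ through explicitly (and note that it is harmless in the places where the lemma is applied, where statements are made ``up to powers of $\delta$''), or renormalize $b_{p,i,p'}$ before claiming equality with $a_{B_{i,p},B_{i,p'}}$.
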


\begin{proof} This is an easy verification involving the
left and right monoid actions of Proposition \ref{3.5}
(use that $e_{i,p'}^{\op} = e_{i,p'}$).
\end{proof}

Recall that the Temperley--Lieb subalgebra of $\Br(\ddC_n)$ is the subalgebra generated by $e_0,\ldots,e_{n-1}$, and similarly for $\Br(\ddA_m)$ and for
the corresponding monoids.  The following lemma implies that the
restriction of $\phi$ to the Temperley--Lieb subalgebra of $\Br(\ddC_n)$
behaves as required.

\begin{lemma}\label{lm:sigmaX}
Let $B\in\cA_\sigma$. Then $E_B=\phi(e_{\fp(B)})$ and, in particular, hence $E_B\in\phi(\Br(\ddC_n))$.
Moreover, the restriction of $\phi$ to the Temperley--Lieb subalgebra of
$\Br(\ddC_n)$ is surjective onto the intersection of the Temperley--Lieb
algebra of $\Br(\ddA_{2n-1})$ with $\SBr(\ddA_{2n-1})$.
\end{lemma}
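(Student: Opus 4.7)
For the identity $E_B=\phi(e_{\fp(B)})$, I would compute directly from the definition $e_{\fp(B)}=\prod_{\beta\in\fp(B)}e_\beta$ of~\eqref{df:eB}, applying $\phi$ factor by factor. Lemma~\ref{lm:pfandphi}(ii) supplies $\phi(e_\beta)=\prod_{\alp\in\fp^{-1}(\beta)}E_\alp$, and Proposition~\ref{lm:fp_injective} identifies $B\in\cA_\sigma$ with the disjoint union $\bigsqcup_{\beta\in\fp(B)}\fp^{-1}(\beta)$, so the nested product collapses:
\[
\phi(e_{\fp(B)}) \;=\; \prod_{\beta\in\fp(B)}\prod_{\alp\in\fp^{-1}(\beta)} E_\alp \;=\; \prod_{\alp\in B} E_\alp \;=\; E_B.
\]
The assertion $E_B\in\phi(\Br(\ddC_n))$ is then immediate.

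For the surjectivity claim, let $T_A$ (resp.\ $T_C$) denote the Temperley--Lieb subalgebra of $\Br(\ddA_{2n-1})$ (resp.\ of $\Br(\ddC_n)$). I would first observe that $T_A\cap\SBr(\ddA_{2n-1})$ is $\Z[\delta^{\pm 1}]$-spanned by Brauer diagrams that are simultaneously planar (height~$0$) and $\sigma$-invariant; by Lemma~\ref{lm:hetB}, each such diagram is uniquely of the form $a_{B_1,B_2}$ with $B_1,B_2\in\cA_\sigma$ of height~$0$ and $|B_1|=|B_2|$. The goal is to realise every such $a_{B_1,B_2}$ as $\phi(x)$ for some $x\in T_C$.

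I would proceed in two stages. First, the diagonal case $B_1=B_2=B$: by the first part of the lemma, $a_{B,B}=E_B=\phi(e_{\fp(B)})$, so it suffices to rewrite $e_{\fp(B)}$ as a product of the generators $e_0,\ldots,e_{n-1}$. Induction on $|B|$ works: an innermost arc of a height-$0$ $\sigma$-symmetric admissible set must be adjacent, occurring either at the fixed middle position $\{n,n+1\}$ or in a $\sigma$-paired couple $\{k,k+1\},\{2n-k,2n-k+1\}$; peeling this innermost configuration off and using relations such as~\eqref{0.1.19},~\eqref{4.1.5}, together with Lemma~\ref{lm:SimpleRootRels}, rewrites $e_{\fp(B)}$ as $e_i\cdot e_{\fp(B')}$ for a smaller $B'\in\cA_\sigma$ of height~$0$. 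Second, the off-diagonal case $B_1\ne B_2$ is reduced to diagonal ones by a bridging construction: one exhibits $y\in T_C$ with $\phi(y)=a_{B_1,B_2}$ by concatenating elementary generators $\phi(e_i)$ that move $B_1$ to $B_2$ through height-$0$ $\sigma$-symmetric intermediate sets. The main obstacle is the rewriting step in the diagonal induction: one must show that the Weyl-group conjugations implicit in the formula $e_\beta=we_iw^{-1}$ always cancel in the full product $\prod_\beta e_\beta$. The prototype is the case $n=2$, $B=B_2$, where $e_{\fp(B_2)}=e_0\cdot r_1e_0r_1=e_0e_1e_0$ by~\eqref{4.1.5}; the general induction extends this cancellation pattern through the nested symmetric structure.
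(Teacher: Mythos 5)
Your computation of $E_B=\phi(e_{\fp(B)})$ is exactly the paper's: apply Lemma \ref{lm:pfandphi}(ii) factorwise and regroup the double product using the partition $B=\bigsqcup_{\beta\in\fp(B)}\fp^{-1}(\beta)$ coming from Proposition \ref{lm:fp_injective}. That half is complete.

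The second half has a genuine gap, and it is precisely the step you flag as ``the main obstacle.'' Reducing to planar symmetric diagrams $a_{B_1,B_2}$ with $B_1,B_2\in\cA_\sigma$ of height $0$ is fine, and $a_{B,B}=E_B$ is correct; but the assertion that one can rewrite $e_{\fp(B)}$ as a word in $e_0,\ldots,e_{n-1}$ by ``peeling off an innermost arc'' is the entire content of the lemma, and your sketch does not supply the inductive mechanism. Note in particular that removing an innermost arc from a height-$0$ set does not leave a height-$0$ set (e.g.\ $B=\{\eps_2-\eps_3,\,\eps_1-\eps_4\}$ for $n=2$: deleting $\a_2$ leaves a set of height $2$), so the claimed recursion $e_{\fp(B)}=e_i\cdot e_{\fp(B')}$ with $B'$ again of height $0$ cannot hold as stated; already in your own prototype the output $e_0e_1e_0$ is not of that shape, and its partial products such as $e_1e_0$ map to \emph{off-diagonal} diagrams $a_{B_1,B_2}$ with $B_1\ne B_2$. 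Consequently the diagonal and off-diagonal cases cannot be separated the way you propose: the induction for the diagonal case necessarily passes through off-diagonal elements, and the ``bridging construction'' for $B_1\ne B_2$ is asserted rather than constructed. The paper avoids both problems by working with one-sided data: it factors $a_{B,B'}=(a_{B,B_{i,\bar i}})(a_{B',B_{i,\bar i}})^{\op}$ and, for each height-$0$ $B\in\cA_\sigma$, constructs a Temperley--Lieb element $b\in\BrM(\ddC_n)$ with $\phi(b)B_{i,\bar i}=B$ by induction on the maximal length $t$ of a chain $\gamma_1\ll\cdots\ll\gamma_t$ in $B$ and on the number of longest chains; the inductive step replaces an explicit configuration $D\cup\sigma(D)$ around $\gamma_2$ by a set $D'\cup\sigma(D')$ of simple roots, producing a strictly simpler $B'$ with $E_X B'=B$ for an admissible $\sigma$-invariant $X$, so that the first part of the lemma yields the required generator $e_{\fp(X)}$. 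Some such explicit combinatorial bookkeeping is what your proposal is missing.
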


\begin{proof}
Suppose $B\in\cA_\sigma$.
Then, by Lemmas \ref{lm:pfandphi}(ii) and \ref{lm:phiIsHomo},
\begin{eqnarray*}
E_B&=&\prod_{\alp\in B} E_\alp
=\prod_{\beta\in \fp(B)}  E_{\fp^{-1}(\beta)}
=\prod_{\beta\in \fp(B)} \phi (e_\beta)
=\phi\left(\prod_{\beta\in \fp(B)} e_\beta\right)
=\phi(e_{\fp(B)}),
\end{eqnarray*}
as required for the first statements.

Let $\bar i\in \{0,1\}$ be such that $i\equiv \bar i \pmod 2$.  Then
$B_{i,\bar i}$ consists of simple roots only and belongs to the same
$W(\ddA_{2n-1})$-orbit in $\cA$ as $B_i$ or in fact $B_{i,p}$ for
any $p$.

As for the last statement, suppose that $a\in \SBr(\ddA_{2n-1})$ is a
monomial in the Temperley--Lieb subalgebra of $\Br(\ddA_{2n-1})$. Then, up
to powers of $\delta$, there are $i\in\{0,\ldots,n\}$ and
$B,B'\in\cA_\sigma$ of height $0$ with $a = a_{B,B'}= (a_{B,B_{i,\bar i}}
)(a_{B',B_{i,\bar i}})^\op$ in the notation of and by use of Lemma
\ref{lm:hetB}.  In view of the opposition involution and the first statement
of this proposition, it suffices to show that $a := a_{B,B_{i,\bar i}}$ 
lies in the
image under $\phi$ of the submonoid of $\BrM(\ddC_n)$ generated by
$e_0,e_1,\ldots,e_{n-1}$, the Temperley--Lieb submonoid of
$\BrM(\ddC_n)$. To this end, let $B\in\cA_\sigma$ be of height $0$. 
We will establish the existence of an element $b$ in this Temperley--Lieb
submonoid with $\phi(b)B_{i,\bar i} = B$. This will suffice as then
Theorem \ref{th:AmDecomp} gives
$a= E_B\phi(b)=\phi(e_{\fp(B)}b)$, up to powers
of $\delta$, and so $a\in\phi(\BrM(\ddC_n))$.

For $\gamma_1=\epsilon_{i_1}-\epsilon_{j_1}$ and
$\gamma_2=\epsilon_{i_2}-\epsilon_{j_2}\in \Phi^+$,  we say $\gamma_1\ll
\gamma_2$ if $1\le i_2<i_1<j_1<j_2\le 2n$.  If $\gamma_1\ll
\gamma_2\ll\cdots\ll\gamma_s$, and $\gamma_k\in B$, for $1\le k\le s$, we
say that $\gamma_1\ll \gamma_2\ll\cdots\ll\gamma_s$ is a \emph{chain in}
$B$, and call $s$ the \emph{length} of this chain.  Now we use induction on
the maximal length $t$ of a chain in $B$ and the number $l$
of longest chains in $B$.

Suppose first $t=1$. Then $B$
consists of simple roots and
$b$ as required can be found
easily.
Suppose $t>1$.  Let $\gamma_1\ll
\gamma_2\ll\cdots\ll\gamma_t$ be a longest chain of $B$ with
$r_2=\epsilon_{j_2}-\epsilon_{i_2}$. Since $a$ is an element of
the Temperley--Lieb submonoid of $\BrM(\ddA_{2n-1})$,
$j_2-i_2$ is odd, and for $1\le k\le r=[(j_2-i_2)/2]$ we have
$\epsilon_{i_2+2k-1}-\epsilon_{i_2+2k}\in B$.  Let
\begin{eqnarray*}
D&=&\{\gamma_2\}\cup\{\epsilon_{i_2+2k-1}-\epsilon_{i_2+2k}\}_{k=1}^r,\\
D'&=&\{\epsilon_{i_2+2k}-\epsilon_{i_2+2k+1}\}_{k=0}^r,\\
B'&=&(B\setminus D\cup\sigma(D))\cup (D'\cup\sigma(D')) .
\end{eqnarray*}
Now $B'\in \cA_\sigma$ is the top of a Temperley--Lieb element with maximal
length less than $t$ or number of maximal chains fewer than $l$. By the
induction hypothesis there exists some Temperley--Lieb element $b'$
in $\BrM(\ddC_n)$ with $\phi(b')B_i=B'$. It satisfies
$$E_X(D'\cup\sigma(D'))=D\cup\sigma(D)\qquad \mbox{ and } \qquad
E_X(B')=B,$$ where $X=(D\setminus\{\gamma_2\})\cup
\sigma(D\setminus\{\gamma_2\})\in \cA_\sigma$. As $E_X =\phi(e_{\fp(X)})$
by the first statement of the lemma, we conclude that
$\phi(e_{\fp(X)}b')B_{i,\bar i} = B$, which proves the lemma.
\end{proof}

\begin{defn}\label{df:Ki}
  Let $i\in\{0,\ldots,n\}$. Define $\hE^{(i)}$ to be the idempotent in
  $\BrM(\ddA_{2n-1})$ corresponding to $b_i$;
\begin{eqnarray*}
\hE^{(i)} &:=& \delta^{-i} \phi(b_i).
\end{eqnarray*}
In addition, define
\begin{eqnarray}\label{eq:ehat}
\hb^{(i)} &:=& \delta^{-i}e_{\fp(B_i)}.
\end{eqnarray}
Then $\hE^{(i)} = \phi(\hb^{(i)})$ by Lemma \ref{lm:sigmaX}.
Furthermore, we write $K_i$ instead of $K_{B_i}$ as introduced
in Definition \ref{df:KB}. 
\end{defn}

Observe that $K_i$ is generated by
\begin{eqnarray}\label{eq:KiGens}
&&\hskip-0.7cm
\hE^{(i)}R_{n\pm\lfloor 2+ i/2\rfloor},
\hE^{(i)}R_{n\pm\lfloor 3+ i/2\rfloor},
\ldots,
\hE^{(i)}R_{n\pm (n-1)},\mbox{\  and \ }
\hE^{(i)}R_{0},
\end{eqnarray}
where $R_0$ stands for the longest reflection of $W(\ddA_{2n-1})$, that is,
$$(R_1R_{2n-1}) (R_2R_{2n-2})\cdots (R_{n-1}R_{n+1}) R_n
(R_{n-1}R_{n+1}) \cdots  (R_2R_{2n-2})(R_1R_{2n-1}).
$$
The definition of $y_i$ from (\ref{df:yn})
shows that $\phi(y_n) = R_0$.

We will now prove that the $\sigma$-fixed part of $K_i$ is contained in the
image of $\phi$, making use of the fact that $K_i$ is a Coxeter group of which the above
generators are a Coxeter system, as given by Lemma \ref{lm:heightZero}.

\begin{lemma}\label{lm:Ki}
Let $i\in\{0,\ldots,n\}$. The set (\ref{eq:KiGens}) of simple reflections of
$K_i$ is invariant under $\sigma$.  In fact, $\sigma$ induces the nontrivial
automorphism on the Coxeter type $\ddA_{2(n-j)-1}$ of $K_i$,
where $j = 1+ \lfloor i/2\rfloor $.  As a consequence, the subgroup of
$\sigma$-fixed elements of $K_i$ is of type $\ddC_{n-j}$ and is generated by
the images under $\phi$ of
$ \hb^{(i)}r_{j+1}, \hb^{(i)}r_{j+2}, \ldots,
\hb^{(i)}r_{n-1}$,
and
$\hb^{(i)}y_{n}\hb^{(i)}$.
\end{lemma}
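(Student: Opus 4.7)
The plan is to proceed in three main stages: $\sigma$-invariance of (\ref{eq:KiGens}), identification of the Coxeter-system structure with the corresponding $\sigma$-action, and matching of the claimed generators of the $\sigma$-fixed subgroup via folding. First, I would check that (\ref{eq:KiGens}) is stable under $\sigma$. Since $B_i$ consists of the $\sigma$-fixed roots $\eps_{n+1-k}-\eps_{n+k}$ (for $k=1,\ldots,i$), we have $B_i\in\cA_\sigma$ and hence $\sigma(\hE^{(i)})=\hE^{(i)}$. The explicit expansion of $R_0$ displayed just below (\ref{eq:KiGens}) is a product of $R_n$ (which is $\sigma$-fixed) together with the $\sigma$-invariant unordered pairs $R_k R_{2n-k}$, so $\sigma(R_0)=R_0$. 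Finally, for each $k\ge1$, the relation $\sigma(R_{n-k})=R_{n+k}$ shows that the pair $\{\hE^{(i)}R_{n-k},\hE^{(i)}R_{n+k}\}$ is preserved by $\sigma$, with its two elements interchanged.

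Second, I would verify that the elements in (\ref{eq:KiGens}), viewed in $K_i$, form a Coxeter system of type $\ddA_{2(n-j)-1}$. The generator count is $2(n-\lfloor 2+i/2\rfloor)+1=2(n-j)-1$, matching the claimed rank. Identifying $K_i$ with its action on the free dots of $B_i$ as in Lemma \ref{lm:heightZero}, one checks that $\hE^{(i)}R_0$ acts as the transposition of the two extremal free dots (positions $1$ and $2n$), while each $\hE^{(i)}R_{n\pm k}$ is an adjacent transposition on one of the two arms of free dots. A direct check of the Coxeter relations exhibits the diagram as a path of length $2(n-j)-1$ with $\hE^{(i)}R_0$ at the central node and the symmetric pairs extending toward the outer free dots on the two arms. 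The $\sigma$-action, which fixes $R_0$ and swaps $R_{n-k}\leftrightarrow R_{n+k}$, realizes the non-trivial diagram involution of $\ddA_{2(n-j)-1}$.

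Third, I would invoke the classical folding: the $\sigma$-fixed subgroup of a Coxeter group of type $\ddA_{2m-1}$ under its non-trivial diagram involution is of type $\ddC_m$, with simple reflections given by the fixed central simple reflection together with the products of the swapped pairs. Applied here with $m=n-j$, this yields a $\sigma$-fixed subgroup of type $\ddC_{n-j}$ whose simple reflections are $\hE^{(i)}R_0$ and $\hE^{(i)}R_{n-k}R_{n+k}$ for $k\in\{j+1,\ldots,n-1\}$. These are precisely the claimed images under $\phi$: by Lemma \ref{lm:sigmaX} we have $\phi(\hb^{(i)})=\hE^{(i)}$; since $n-k$ and $n+k$ are non-adjacent in $\ddA_{2n-1}$ for $k\ge1$, the factors in $\phi(r_k)=R_{n-k}R_{n+k}$ commute, so $\phi(\hb^{(i)}r_k)=\hE^{(i)}R_{n-k}R_{n+k}$; and expanding $\phi(y_n)$ according to (\ref{df:yn}) reproduces termwise the product defining $R_0$, giving $\phi(y_n)=R_0$ and hence $\phi(\hb^{(i)}y_n\hb^{(i)})=\hE^{(i)}R_0\hE^{(i)}$.

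The main obstacle is the second stage, where the braid relations involving $\hE^{(i)}R_0$ and its adjacent generators require the most care: $R_0$ is not a simple reflection of $W(\ddA_{2n-1})$, so these relations must be verified either by expanding $R_0$ and applying the Brauer-algebra relations (\ref{1.1.8})--(\ref{1.1.10}), or by using the action of $\BrM(\ddA_{2n-1})$ on admissible sets developed in Section \ref{sect:typeA}. Once this Coxeter-system structure is pinned down, the folding and the explicit matching of generators are straightforward.
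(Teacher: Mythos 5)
Your proof follows essentially the same route as the paper's: check that $\sigma$ fixes $\hE^{(i)}$ and $R_0$ and interchanges $R_{n-k}\leftrightarrow R_{n+k}$, so that it induces the nontrivial diagram automorphism of the Coxeter system of $K_i$, then apply the classical folding result (the paper cites M\"uhlherr) and match the folded generators with the $\phi$-images via $\phi(\hb^{(i)})=\hE^{(i)}$ and $\phi(y_n)=R_0$. The additional care you take in verifying that the set (\ref{eq:KiGens}) really forms a type $\ddA$ Coxeter system with $\hE^{(i)}R_0$ at the central node is detail the paper leaves implicit (it relies on Lemma \ref{lm:heightZero} and the unproved observation preceding the lemma), but the argument is the same.
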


\begin{proof}
Clearly, $\sigma$ fixes $\hE^{(i)} = \phi(\hb^{(i)})$.
Moreover, it fixes $R_0$ and interchanges $R_{n- k}$ and
$R_{n+k}$, so indeed the Coxeter system of $K_i$ is $\sigma$-invariant and
$\sigma$ induces the nontrivial automorphism on the Coxeter type
$\ddA_{2(n-j)-1}$ of $K_i$. It is well known (cf.~\cite{Muehl92})
that the subgroup of $\sigma$-fixed elements of $K_i$ is generated by
the Coxeter system $$\hE^{(i)}R_{j+1}R_{2n-j-1},
\hE^{(i)}R_{j+2}R_{2n-j-2},
\ldots,
\hE^{(i)}R_{1}R_{2n-1},\mbox{ \  and \ }
\hE^{(i)}R_{0}
$$
of type $\ddC_{n-j}$.
These generators coincide
with the $\phi$-images of the simple reflections
in the statement of the lemma.
\end{proof}

The case $i=0$ of the above lemma confirms that the restriction of
$\phi$ to $W(\ddC_n)$ is an embedding of this group into
$W(\ddA_{2n-1})$ whose image coincides with the
$\sigma$-fixed elements of $W(\ddA_{2n-1})$.

The $W(\ddA_{2n-1})$-orbit of $B_{i,p}$ contains $B_i$, but, for $p<i$,
these two admissible sets are in distinct $W(\ddC_n)$-orbits: For $B\in\cB$,
the numbers $i$, the size of $B$, and $p$, the number of roots in $B$ fixed
by $\sigma$, are constant on the $W(\ddC_n)$-orbit of $B$ in $\cA_\sigma$.
They actually determine this orbit uniquely.

\begin{prop}\label{prop:tran}
Let $B\in\cA_\sigma$ be such that the number of $\sigma$-fixed roots in
$B$ is equal to $p$ and such that $B$ has cardinality $i$.  Then there exists
an element $w$ of the subgroup $W(\ddC_n)$ of $W(\ddA_{2n-1})$ such that
$wB_{i,p}=B$.
\end{prop}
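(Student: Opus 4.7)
My plan is to reduce the proposition to a transitivity statement for the hyperoctahedral group acting on $\sigma$-invariant matchings of the $2n$ top dots of a Brauer diagram. By Proposition \ref{lm:fp_injective} and the identification of admissible sets in $\cA_\sigma$ with $\sigma$-invariant matchings of the top row (in the sense of Section \ref{sect:typeA}), it suffices to find $w\in W(\ddC_n)$ whose image under $\phi$ sends $B_{i,p}$ to $B$. As already recalled at the start of this section, $\phi$ restricts to an isomorphism of $W(\ddC_n)$ onto the centralizer $C$ of $\sigma$ in $W(\ddA_{2n-1})\cong S_{2n}$, and this $C$ is the hyperoctahedral group $(\Z/2)^n\rtimes S_n$ acting on the $n$ \emph{axes} $A_k=\{k,2n+1-k\}$. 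Using the formulas $\phi(r_0)=R_n$ and $\phi(r_j)=R_{n-j}R_{n+j}$, a direct check shows that $\phi(r_j)$ swaps the adjacent axes $A_j$ and $A_{j+1}$ for $j>0$, while $\phi(y_k)$ with $y_k$ from (\ref{df:yn}) inverts $A_k$ and fixes the other axes pointwise; together, these realize every axis-permutation and every axis-inversion.

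Next, I will classify the strands of a $\sigma$-invariant matching $B$. Each strand is either \emph{axial}, with both endpoints forming a single axis (so the associated root is $\sigma$-fixed), or \emph{paired}: together with its $\sigma$-mirror it forms a two-strand orbit that occupies two distinct axes. Hence the parameter $p$ counts the axial strands and accounts for $p$ axes, the remaining $i-p$ strands form $(i-p)/2$ mirror-pairs occupying another $i-p$ axes in pairs, and $n-i$ axes are unused. A pair of axes supporting such a mirror-pair admits exactly two configurations, \emph{parallel} (like-to-like across the vertical axis of symmetry) and \emph{crossing} (like-to-unlike), interchanged by inverting either of the two axes.

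To conclude, I build the required $w$ in two stages. Since $B_{i,p}$ and $B$ share the counts $(p,(i-p)/2,n-i)$, I first find a permutation of the $n$ axes, realized in $W(\ddC_n)$ as a product of $\phi(r_j)$'s ($j>0$), that sends the axial axes of $B_{i,p}$ to the axial axes of $B$, the pair-supporting axes of $B_{i,p}$ to the pair-supporting axes of $B$ with matching pair-partitions, and the free axes to free axes. Applied to $B_{i,p}$, this produces a matching with the same axial and paired axes as $B$, but possibly differing in the parallel-versus-crossing choice on some paired axis-pairs; I correct those discrepancies by applying axis-inversions $\phi(y_k)$, obtaining $B$ on the nose. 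The conceptually decisive ingredient is the identification of $\phi(W(\ddC_n))$ with the full hyperoctahedral group $C$ of axis-symmetries; once this is in hand only combinatorial bookkeeping remains, and I expect the bookkeeping of matching up axes and adjusting configurations to be the main source of technical detail rather than a genuine obstacle.
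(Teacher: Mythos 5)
Your proof is correct and takes essentially the same route as the paper's: both arguments reduce, via the identification of $\phi(W(\ddC_n))$ with the centralizer of $\sigma$ in $W(\ddA_{2n-1})$ (the case $i=0$ of Lemma \ref{lm:Ki}), to exhibiting a $\sigma$-symmetric permutation of the $2n$ dots carrying $B_{i,p}$ to $B$, and both then produce that permutation by matching up the $\sigma$-fixed strands, the mirror-pairs of strands, and the free dots separately. The only difference is presentational: the paper writes down the symmetric permutation diagram directly (four vertical strands per mirror-pair, two per $\sigma$-fixed root, non-crossing strands on the rest), while you factor the same permutation through the hyperoctahedral structure as an axis permutation followed by axis inversions correcting the parallel-versus-crossing discrepancies.
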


\begin{proof}
By Theorem \ref{th:AmDecomp} and the case $i=0$ of Lemma \ref{lm:Ki}, it
suffices to find a symmetric diagram $w$ in $\BrM(\ddA_{2n-1})$ without
horizontal strands, moving $B_{i,p}$ to $B$.

For each $\gamma\in B$ with $\gamma\ne \sigma(\gamma)$, where
$\gamma=\alpha_{t}+\alpha_{t+1}+\cdots+\alpha_s$, $1\leq t\leq s\leq 2n-1$,
we draw four vertical strands in $w$ as follows: from $(t,1)$ to $(k,0)$,
from $(2n+1-t,1)$ to $(2n+1-k,0)$, from $(s+1,1)$ to $(k+1,0)$, and from
$(2n-s,1)$ to $(2n-k,0)$ with $\{\alpha_k,\alpha_{2n-k}\}\subset
B_{i,p}$. For each $\gamma\in B$ with $\gamma=\sigma(\gamma)$, where
$\gamma=\alpha_{n-t}+\alpha_{n-t+1}+\cdots+\alpha_{n+t}$, $0\leq t\leq n-1$,
we draw two vertical strands: from $(n-t,1)$ to $(n-k,0)$, and from
$(n+t+1,1)$ to $(n+k+1,0)$ where
$\alpha_{n-k}+\alpha_{n-k+1}+\cdots+\alpha_{n+k}\in B_{i,p}$.  Between the
remaining $2n-2i$ dots at the top and $2n-2i$ nodes at the bottom, we just
draw vertical strands in such a way that these strands do not cross. This
provides the required diagram $w$.
 \end{proof}

\begin{prop}\label{prop:phisurj}
The homomorphism $\phi: \Br(\ddC_n)\to \SBr(\ddA_{2n-1})$ is surjective.
\end{prop}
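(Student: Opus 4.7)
The plan is to reduce the surjectivity to results already established, via the normal-form decomposition of Theorem \ref{th:AmDecomp}. Since symmetric diagrams span $\SBr(\ddA_{2n-1})$, it suffices to show that every symmetric monomial $a\in\BrM(\ddA_{2n-1})$ lies in $\phi(\Br(\ddC_n))$. Fix such an $a$, set $i:=|a\emptyset|$, and observe that both $a\emptyset$ and $\emptyset a$ lie in $\cA_\sigma$ because $\sigma(a)=a$ and $\sigma(\emptyset)=\emptyset$. The admissible set $B_i$ from (\ref{eq:Bi}) is $\sigma$-invariant, of size $i$, and of height $0$, so Theorem \ref{th:AmDecomp} applied with $B=B_i$ yields a unique factorisation
\[ a \;=\; \delta^k\, U\, V\, W \]
with $U=a_{a\emptyset,B_i}$, $W=a_{B_i,\emptyset a}$, and $V\in K_i$.

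The first key step is to propagate the $\sigma$-symmetry of $a$ to each of the three factors. Because $\sigma$ is an algebra automorphism of $\Br(\ddA_{2n-1})$ that permutes the standard generators, it preserves heights and maps a height-minimal diagram with top $X$ and bottom $Y$ to one with top $\sigma X$ and bottom $\sigma Y$. The uniqueness clause of Lemma \ref{lm:hetB} therefore forces $\sigma(a_{X,Y})=a_{\sigma X,\sigma Y}$. Applied to our factorisation, and using that $\sigma$ fixes $a\emptyset$, $\emptyset a$, and $B_i$, this gives $\sigma(U)=U$ and $\sigma(W)=W$; the uniqueness in Theorem \ref{th:AmDecomp} applied to $\sigma(a)=a$ then forces $\sigma(V)=V$ as well.

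It remains to produce $\phi$-preimages for each symmetric factor. For the middle factor, $V$ lies in the $\sigma$-fixed subgroup of $K_i$, which by Lemma \ref{lm:Ki} is generated by elements in $\phi(\BrM(\ddC_n))$, so $V\in\phi(\Br(\ddC_n))$. For the outer factors, note that a diagram of height $0$ is crossing-free and hence, up to a power of $\delta$, belongs to the Temperley--Lieb subalgebra of $\Br(\ddA_{2n-1})$. Being in addition symmetric, $U$ and $W$ lie in the intersection of this Temperley--Lieb subalgebra with $\SBr(\ddA_{2n-1})$, which Lemma \ref{lm:sigmaX} identifies with the $\phi$-image of the Temperley--Lieb subalgebra of $\Br(\ddC_n)$. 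Hence $U,W\in\phi(\Br(\ddC_n))$, and combining the three factors yields $a\in\phi(\Br(\ddC_n))$.

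The delicate point is the propagation step: one must invoke the uniqueness in Theorem \ref{th:AmDecomp} strongly enough to pin down the $\sigma$-image of each of $U$, $V$, $W$ individually rather than merely their product. Once this is secured, Lemmas \ref{lm:Ki} and \ref{lm:sigmaX} furnish $\phi$-preimages for the Coxeter and Temperley--Lieb parts respectively, and surjectivity falls out immediately. Note also that the argument does not need Proposition \ref{prop:tran} or Lemma \ref{lm:BipAct}; those play their role in the companion injectivity and basis-size counts handled in Section \ref{sect:phi}.
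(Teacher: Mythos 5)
Your reduction to symmetric monomials, your use of Theorem \ref{th:AmDecomp} with $B=B_i$, and your propagation of $\sigma$-invariance to each factor $U$, $V$, $W$ via the uniqueness clause all match the paper's argument and are sound. The treatment of the middle factor $V$ through Lemma \ref{lm:Ki} is also correct. The gap is in your treatment of the outer factors: you assert that $U$ and $W$ have height $0$ and hence lie (up to powers of $\delta$) in the Temperley--Lieb subalgebra of $\Br(\ddA_{2n-1})$, so that Lemma \ref{lm:sigmaX} applies. This is false in general. By Theorem \ref{th:AmDecomp} and Lemma \ref{lm:hetB}, $\het(U)=\het(a\emptyset)$ and $\het(W)=\het(\emptyset a)$, and the height of an admissible set is typically positive (e.g.\ the top in Figure \ref{fig:BrauerDiag} has height $4$). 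A diagram of positive height necessarily contains crossings and is not a product of $E_j$'s alone, so it is not a Temperley--Lieb monomial and Lemma \ref{lm:sigmaX} says nothing about it.

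This is precisely why the paper cannot dispense with Proposition \ref{prop:tran} and Lemma \ref{lm:BipAct}, contrary to your closing remark. To handle a symmetric $U$ of positive height, one writes its top as $B=wB_{i,p}$ for some $\sigma$-fixed $w\in W(\ddA_{2n-1})_\sigma=\phi(W(\ddC_n))$ (Proposition \ref{prop:tran}), observes via Lemma \ref{lm:BipAct} that the height-zero representative $E^{(i,p)}=\phi(e_{p+1}e_{p+3}\cdots e_{i-1})\hE^{(i)}$ is in the image of $\phi$, and then corrects by a $\sigma$-fixed element $V'$ of $K_i$ (again using the uniqueness in Theorem \ref{th:AmDecomp} and Lemma \ref{lm:Ki}) to obtain $U=wE^{(i,p)}V'\in\phi(\BrM(\ddC_n))$. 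You need to supply this step, or an equivalent one, before the surjectivity follows.
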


\begin{proof}
It suffices to prove the statement for
the corresponding monoids. So, let $a$ be an element of
$\BrM(\ddA_{2n-1})$ with $\sigma(a) = a$.
Then, by Theorem \ref{th:AmDecomp},
up to replacing $a$ by a power of $\delta$, we have
\begin{eqnarray}
\label{eq:a}
a &=& U\hE^{(i)}VW
\end{eqnarray}
for some
$i\in\{ 0,\ldots, n\}$,
$U\in \BrM(\ddA_{2n-1})\hE^{(i)}$,
$W\in \hE^{(i)}\BrM(\ddA_{2n-1})$, and $V\in K_i$
such that $\het(U)+\het(V)+\het(W) = \het(a)$.
Here $B_i$
and $K_i$ are as in Definition \ref{df:Ki}.
As noted before, $\sigma(B_{i}) = B_{i}$,
which implies $\sigma(\hE^{(i)}) = \hE^{(i)}$.
{From} Lemma \ref{lm:sigmaX}, it follows that
$\hE^{(i)}\in\phi(\BrM(\ddC_n))$. Now $\phi$ is a homomorphism, so it suffices
to show that $U$, $V$, $W$ are in the image of $\phi$.

As $\sigma(a) = a$ we find $U\hE^{(i)}VW = \sigma(U) \hE^{(i)}
\sigma(V)\sigma(W)$ with $\sigma(U)\in \BrM(\ddA_{2n-1})\hE^{(i)}$,
$\sigma(W)\in \hE^{(i)}\BrM(\ddA_{2n-1})$ and $\sigma(V)\in K_i$ (note that
$\sigma(K_{i}) = K_i$ by Lemma \ref{lm:Ki}) such that
$\het(\sigma(U))+\het(\sigma(V))+\het(\sigma(W)) = \het(a)$.  According to
Theorem \ref{th:AmDecomp}, the expression in (\ref{eq:a}) is unique, which
implies $\sigma(U)= U$, $\sigma(V)=V$, and $\sigma(W) = W$.  {From} Lemma
\ref{lm:Ki}, we find $V\in\phi(\BrM(\ddC_n))$.  Writing $U\hE^{(i)}VW =
(U\hE^{(i)})V (W\hE^{(i)})^{\op}$ and using Proposition \ref{prop:opp} (observe
that the anti-involution $x\mapsto x^\op$ commutes with $\phi$ and
$\sigma$), we may restrict ourselves to the case where $a =
U$ with $\het(U) = \het(a\emptyset)$. Therefore, we will assume that
$a$ is of this kind.

Put $B = a\emptyset$. Then $B\in\cA_\sigma$ and so there are
$w\in W(\ddA_{2n-1})_\sigma = \phi(W(\ddC_n))$ and
$p,i\in\{0,1,\ldots,n\}$ with $0\le p\le i$ and $i-p$ even such that
$B = w B_{i,p}$.

If $B = B_{i,p}$, then, according to Lemma \ref{lm:BipAct} and Theorem
\ref{th:AmDecomp},
$U = \phi(e_{p+1}e_{p+3}\cdots e_{i-1})\hE^{(i)}$, which
belongs to $\phi(\BrM(\ddC_n))$.
Denote this element by  $E^{(i,p)}$.

Now, in the general case,
$wE^{(i,p)} = U\hE^{(i)}V$ for some $V\in K_i$ such that
$\het(V) = \het(wE^{(i,p)}) - \het(B)$. By Theorem \ref{th:AmDecomp}
the element
$V$ of $K_i$ is uniquely determined by $w$, so
$ U\hE^{(i)}V = wE^{(i,p)} = \sigma(w)\sigma(E^{(i,p)}) =
\sigma(wE^{(i,p)}) = U\hE^{(i)}\sigma(V)$ implies $V = \sigma(V)$.
The inverse $V'$ of $V$ in the group $K_i$ with unit $\hE^{(i)}$ satisfies
$wE^{(i,p)}V' = U\hE^{(i)} = U =  a$.
As $V'$ is again uniquely determined by $V$, we have $\sigma(V') = V'$
and so Lemma \ref{lm:Ki} gives $V'\in\phi(\BrM(\ddC_n))$.
We conclude
$a = wE^{(i,p)}V' \in \phi(\BrM(\ddC_n))$.
\end{proof}

\begin{section}{Admissible sets and their orbits} \label{sect:adm}
We continue with the study of the Brauer monoid $\BrM(\ddC_n)$ of type
$\ddC_n$ acting on $\cA_\sigma$, the subset of $\cA$ of $\sigma$-invariant
admissible sets.  This leads to a normal form for elements of $\BrM(\ddC_n)$
to the extent that we can provide an upper bound on the rank of
$\Br(\ddC_n)$. The bound found in Theorem \ref{th:uniqueform} is
instrumental in the proof at the end of this section of the main Theorem \ref{thm:main}.

\begin{lemma} \label{lem:numip}
Let $i\in\{0,\ldots,n\}$ and $p\in\{0,\ldots,i\}$ be such that
$q=(i-p)/2$ is an integer.
Then the $W(\ddC_n)$-orbit of $B_{i,p}$ has size ${n!}/(p!q!(n-i)!)$.
\end{lemma}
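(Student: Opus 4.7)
The plan is to reduce the orbit-counting problem to a combinatorial enumeration of admissible sets of a prescribed shape in $\cB$. By Proposition~\ref{lm:fp_injective}, $\fp:\cA_\sigma\to\cB$ is a $W(\ddC_n)$-equivariant bijection, so the orbit $W(\ddC_n)\cdot B_{i,p}$ in $\cA_\sigma$ has the same cardinality as the orbit $W(\ddC_n)\cdot\fp(B_{i,p})$ in $\cB$. Under $\fp$, the $\sigma$-fixed roots in a member of $\cA_\sigma$ correspond to long roots in the image (by the squared-norm distinction in Definition~\ref{df:fp}), while the pairs of $\sigma$-related roots collapse to short roots. Combined with Proposition~\ref{prop:tran}, this identifies the orbit with the set of all admissible $B\in\cB$ having exactly $p$ long and $q$ short roots.

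Next, I would set up orthonormal coordinates $f_1,\dots,f_n$ for $\R^{2n}_\sigma$ (namely, $f_k:=(\eps_k-\eps_{2n+1-k})/2$) so that the positive roots of $\Psi$ are the long roots $2f_k$ ($1\le k\le n$) and the short roots $f_j\pm f_k$ ($1\le j<k\le n$). I would then establish a bijection between admissible sets $B\in\cB$ of the prescribed shape and triples $(I,\cM,\veps)$, where $I\subseteq\{1,\dots,n\}$ has size $p$ and encodes the long roots $\{2f_i:i\in I\}\subseteq B$, $\cM$ is a collection of $q$ pairwise disjoint $2$-subsets of $\{1,\dots,n\}\setminus I$, and $\veps$ assigns a sign to each pair $\{j,k\}\in\cM$ selecting between the short roots $f_j+f_k$ and $f_j-f_k$. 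The forward direction uses that disjoint index supports force orthogonality in $\Psi$ and, by inspecting the explicit $\fp$-preimages, also orthogonality in $\Phi$. The backward direction uses that orthogonality in $\Psi$ already forces disjoint supports, and that the configuration $\{f_j-f_k,\,f_j+f_k\}$ fails admissibility for exactly the reason given in Remark~\ref{rem:notalladm}, ruling out two sign choices on the same pair.

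Finally, I would count the triples $(I,\cM,\veps)$: there are $\binom{n}{p}$ choices for $I$; then $(n-p)!/\bigl(2^q\, q!\,(n-i)!\bigr)$ ways to partition the remaining $n-p$ indices into $q$ unordered pairs and $n-i$ singletons; then $2^q$ sign choices for $\veps$. Multiplying gives $n!/\bigl(p!\,q!\,(n-i)!\bigr)$, as required. The main obstacle is the admissibility dichotomy in the short-root step: confirming that \emph{each} sign choice independently yields an admissible set, but that \emph{both} sign choices for the same index pair cannot coexist. This is essentially the contrast between the two halves of Remark~\ref{rem:notalladm}, and once it is in hand the enumeration reduces to an elementary multinomial computation.
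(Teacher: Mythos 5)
Your proof is correct and follows essentially the same route as the paper: both use Proposition~\ref{prop:tran} to identify the orbit with the set of all admissible configurations having $p$ fixed (long) and $q$ paired (short) strands (roots), and then perform the same multinomial count. The paper enumerates symmetric diagram tops directly as $\binom{n}{p}\binom{n-p}{2q}(4q-2)(4q-6)\cdots 2$, which is your $(I,\cM,\veps)$ count in diagrammatic form; your extra care with the admissibility dichotomy of Remark~\ref{rem:notalladm} merely makes explicit what the paper declares ``readily seen.''
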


\begin{proof}
By Proposition \ref{prop:tran}, the
cardinality of the orbit of $B_{i,p}$ under $W(\ddC_n)$ is
equal to the number of diagram tops with
$i$ horizontal strands of which precisely
$p$ strands are fixed by $\sigma$. This number is readily seen to be
\[{n\choose{p}}{n-p\choose{2q}} (4q-2) (4q-6) \cdots 2
\quad=\,\,\frac{n!}{p!q!(n-i)!}.\]
\end{proof}

\begin{cor}\label{lemma:anformproof}
The rank $a_{2n}$ of $\SBr(\ddA_{2n-1})$ satisfies
\begin{eqnarray*}
a_{2n}&=&\sum_{i =0}^n
    \left(\sum_{p+2q = i}
               \frac{n!}{p! q! (n-i)!}
    \right)^2
     \,2^{n-i}\, (n-i)!.
\end{eqnarray*}
\end{cor}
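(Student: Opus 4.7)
The plan is to enumerate symmetric diagrams in $\BrM(\ddA_{2n-1})$ directly, stratifying by $i$, the number of horizontal strands at the top. Since the diagram is $\sigma$-invariant and $\sigma$ preserves the top/bottom decomposition, the bottom also has $i$ horizontal strands, and the remaining $2(n-i)$ free dots at each of the top and bottom are connected by vertical strands. Thus every symmetric diagram is uniquely encoded as a triple (symmetric top, $\sigma$-equivariant bijection on free dots, symmetric bottom), and I would compute the size of each factor separately.

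First I would fix $i\in\{0,\ldots,n\}$ and count the symmetric tops having exactly $i$ horizontal strands. By Proposition \ref{prop:tran}, every element of $\cA_\sigma$ of cardinality $i$ lies in the $W(\ddC_n)$-orbit of some $B_{i,p}$, with $p\in\{0,\ldots,i\}$ the number of $\sigma$-fixed roots and $i-p$ even; writing $q=(i-p)/2$, Lemma \ref{lem:numip} gives the orbit size as $n!/(p!\,q!\,(n-i)!)$. Summing over $p$, the total number of symmetric tops with $i$ horizontal strands equals
\[ N_i \,:=\, \sum_{p+2q=i}\frac{n!}{p!\,q!\,(n-i)!}. \]
An identical count applies to bottoms, contributing the factor $N_i^{\,2}$.

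Next I would count the $\sigma$-equivariant bijections between the $2(n-i)$ free top dots and the $2(n-i)$ free bottom dots. Since $\sigma$ acts without fixed points on the $2n$ top dots, and preserves the set of free dots, it partitions the free top dots into $n-i$ pairs, and similarly for the bottom. A $\sigma$-equivariant bijection must carry pairs to pairs, producing a permutation of the $n-i$ top pairs onto the $n-i$ bottom pairs ($(n-i)!$ choices), and within each matched pair of $\sigma$-orbits there are two ways to extend to a bijection on the four underlying points ($2^{n-i}$ choices). Hence there are $2^{n-i}(n-i)!$ vertical completions.

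Combining the three factors and summing over $i$ produces
\[ a_{2n} \,=\, \sum_{i=0}^{n} N_i^{\,2}\cdot 2^{n-i}(n-i)!, \]
which matches the closed formula~\eqref{eq:anformula}. The step requiring the most care is the vertical count: one must verify that $\sigma$ acts freely on the free top and bottom dots (hence the $n-i$-pair partition is genuine), and that the resulting hyperoctahedral-type count $2^{n-i}(n-i)!$ correctly enumerates $\sigma$-equivariant bijections between two copies of a free $\sigma$-set with $n-i$ orbits of size two. The remaining ingredients — existence and uniqueness of the orbit representatives $B_{i,p}$ and their orbit sizes — are supplied directly by Proposition~\ref{prop:tran} and Lemma~\ref{lem:numip}.
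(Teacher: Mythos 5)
Your proof is correct and follows essentially the same route as the paper: stratify symmetric diagrams by the number $i$ of horizontal strands, count symmetric tops and bottoms via Lemma \ref{lem:numip} (summed over $p$), and observe that the vertical completions are the $\sigma$-equivariant bijections between the free dots, numbering $2^{n-i}(n-i)!$ (the order of $W(\ddC_{n-i})$, as the paper phrases it). Your version merely spells out the details (freeness of $\sigma$ on the free dots, the triple decomposition) that the paper's two-sentence proof leaves implicit.
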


\begin{proof} If in a symmetric diagram with $2i$ horizontal strands,
all horizontal strans are fixed,
the remaining $2(n-i)$ vertical strands will be in one to one correspondence
with the elements of the Weyl group of type $\ddC_{n-i}$ and order
$2^{n-i}(n-i)!$. Therefore the
corollary follows from Lemma \ref{lem:numip}.
\end{proof}

\noindent Now we proceed to describe the stabilizer in $W(\ddC_n)$ of $B_{i,p}$.

\begin{defn}
\label{df:Ai}
Let $i\in\{0,\ldots,n\}$ and $p\in\{0,\ldots,i\}$
be such that $i-p = 2q$ for some $q\in\N$.
By $A_{i,p}$ we denote the subgroup of $W(\ddC_n)$ generated by the
following elements:
\begin{eqnarray*}
&&r_j\ (j=0,\dots, p-1),
\\
&&r_{p+2k-1}\ (k=1,\dots, q),
\\
&&t_{0,p}=y_{p+1}r_{p+1}y_{p+1}, \\
&&t_{k,p}=r_{p+2k}r_{p+2k-1}r_{p+2k+1}r_{p+2k}\ (k=1,\dots,q-1).
\end{eqnarray*}
Furthermore, by $L_i$ we denote the subgroup of $W(\ddC_n)$
generated by $y_{i+1}$,  $r_{i+1}$, $\ldots$, $r_{n-1}$.
Finally,
we set $N_{i,p}=\langle A_{i,p}, L_i\rangle $
and let
$D_{i,p}$ be a fixed set of representatives for left cosets of
$N_{i,p}$ in $W(\ddC_n)$.
\end{defn}
Figure \ref{fig:t} depicts $B_{6,2}$ as a top and
two elements of the form $t_{k,p}$.

\begin{figure}[h!]
\begin{center}
\includegraphics[width=.9\textwidth]{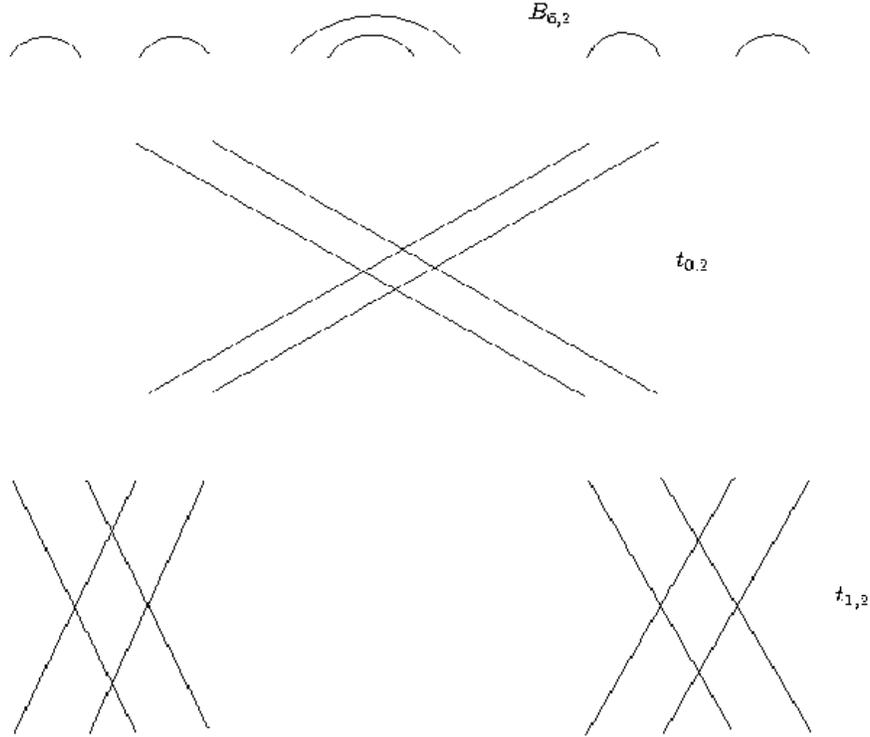}
\end{center}
\caption{The $\phi$-images of the elements  $t_{0,2}$ and $t_{1,2}$.}
\label{fig:t}
\end{figure}

It is easy to check that the generators of $A_{i,p}$ and $L_i$, and hence
the whole group $N_{i,p}$ leaves $B_{i,p}$ invariant.  The next lemma shows that
$N_{i,p}$ is the full stabilizer of $B_{i,p}$ in $W(\ddC_n)$.

\begin{lemma}\label{lem:Li}
The subgroup of $W(\ddC_n)$
generated by $\{t_{j,p}\}_{j=0}^{q-1}$ is isomorphic to $W(\ddC_{q})$ and
the cardinality of $A_{i,p}$ is $2^{i}p!q!$.
Moreover, $L_i$ is isomorphic to $W(\ddC_{n-i})$.
Furthermore, $N_{i,p}$
is the stabilizer of $B_{i,p}$ in $W(\ddC_n)$ and
isomorphic to $ A_{i,p}\times L_i$.
\end{lemma}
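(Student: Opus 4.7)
The plan is to establish the four assertions of the lemma in sequence: (i) $\langle t_{j,p}\rangle_{j=0}^{q-1}\cong W(\ddC_q)$; (ii) $|A_{i,p}|=2^i p!q!$; (iii) $L_i\cong W(\ddC_{n-i})$; and (iv) $N_{i,p}\cong A_{i,p}\times L_i$ equals the full stabilizer of $B_{i,p}$. First, I check that every listed generator fixes $B_{i,p}$ as a set by direct computation of the action of $\BrM(\ddC_n)$ on $\cA_\sigma$ (Proposition~\ref{3.5}) against the diagrammatic picture of $B_{i,p}$ in Figure~\ref{fig:Bip}: the $r_j$ with $j<p$ and the reflections $r_{p+2k-1}$ each stabilize the nested and paired strands, $t_{k,p}$ with $k\ge 1$ interchanges the $k$-th and $(k+1)$-th pair, $t_{0,p}$ flips the first pair, and the generators of $L_i$ act on the complementary free dots outside $B_{i,p}$.

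For (i), each $t_{k,p}$ is an involution via a short computation using the braid relation for $r_{p+2k-1},r_{p+2k},r_{p+2k+1}$ and the commutation of $r_{p+2k-1}$ with $r_{p+2k+1}$; the Coxeter relations $(t_{0,p}t_{1,p})^4=1$, $(t_{k,p}t_{k+1,p})^3=1$ for $k\ge 1$, and commutation for $|j-l|\ge 2$ are verified from the defining relations \eqref{0.1.3}--\eqref{0.1.11} together with Proposition~\ref{X_n}. Faithfulness follows since the $\phi$-images of $t_{0,p},\ldots,t_{q-1,p}$ induce distinct signed permutations of the $q$ pairs of strands in $B_{i,p}$. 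For (iii), the analogous verification that $(y_{i+1}r_{i+1})^4=1$, together with $[y_{i+1},r_{i+k}]=1$ for $k\ge 2$ (which follows from Proposition~\ref{X_n}(i) since $y_{i+1}$ involves only $r_0,\ldots,r_i$) and the standard type $\ddA_{n-i-1}$ relations among $r_{i+1},\ldots,r_{n-1}$, establishes the presentation of type $\ddC_{n-i}$, yielding $|L_i|=2^{n-i}(n-i)!$.

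For (ii), note that $\langle r_0,\ldots,r_{p-1}\rangle\cong W(\ddC_p)$ and, by Proposition~\ref{X_n}(i) together with an index inspection, each of its generators commutes with every other listed generator of $A_{i,p}$. A braid-style computation gives $t_{k,p}r_{p+2k-1}t_{k,p}=r_{p+2k+1}$ for $k\ge 1$, while $t_{0,p}$ centralizes each $r_{p+2k-1}$ (because $r_{p+1}$ commutes with both $y_{p+1}r_{p+1}y_{p+1}$ and with each $r_{p+2k-1}$ for $k\ge 2$). Hence $\langle r_{p+1},r_{p+3},\ldots,r_{p+2q-1}\rangle\cong (\Z/2)^q$ is a normal subgroup of $\langle r_{p+2k-1},t_{j,p}\rangle$, with quotient $W(\ddC_q)$ acting through its $S_q$ quotient, so this subgroup has order $2^{2q}q!$. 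Combining, $|A_{i,p}|=2^p p!\cdot 2^{2q}q!=2^i p!q!$.

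For (iv), every generator of $L_i$ either has index $\ge i+1$ or equals $y_{i+1}$, while every generator of $A_{i,p}$ has index $\le i-1$ or occurs inside $y_{p+1}$ with $p+1\le i-1$. Proposition~\ref{X_n}(i) and the commutation relations for distant indices then show that $L_i$ and $A_{i,p}$ commute elementwise. Their intersection is trivial, which is most easily seen through the embedding $\phi$: the image $\phi(A_{i,p})$ acts on the $2i$ strand-endpoints of $B_{i,p}$ while $\phi(L_i)$ fixes each of these endpoints pointwise. Therefore $N_{i,p}\cong A_{i,p}\times L_i$ has order $2^n p!q!(n-i)!$. By orbit--stabilizer and Lemma~\ref{lem:numip} the full stabilizer $\mathrm{Stab}_{W(\ddC_n)}(B_{i,p})$ has the same order $|W(\ddC_n)|/|W(\ddC_n)\cdot B_{i,p}|=2^n n!\cdot p!q!(n-i)!/n!=2^n p!q!(n-i)!$, so $N_{i,p}$ is the full stabilizer. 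The main obstacle is the sustained braid-manipulation needed for $t_{k,p}r_{p+2k-1}t_{k,p}=r_{p+2k+1}$ and $(t_{0,p}t_{1,p})^4=1$, together with a careful support argument for $A_{i,p}\cap L_i=\{1\}$ via the $\phi$-embedding into $W(\ddA_{2n-1})$.
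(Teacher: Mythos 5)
Your overall strategy coincides with the paper's: the same decomposition of $A_{i,p}$ into the parabolic $\langle r_0,\ldots,r_{p-1}\rangle\cong W(\ddC_p)$ commuting with an extension of $\langle t_{0,p},\ldots,t_{q-1,p}\rangle\cong W(\ddC_q)$ by the normal elementary abelian subgroup $\langle r_{p+1},r_{p+3},\ldots,r_{i-1}\rangle\cong(\Z/2)^q$, the same conjugation formulas $t_{k,p}r_{p+2k-1}t_{k,p}=r_{p+2k+1}$, the same direct-product structure $N_{i,p}=A_{i,p}\times L_i$, and the same concluding comparison of $|W(\ddC_n)|/|N_{i,p}|$ with the orbit size from Lemma \ref{lem:numip} via orbit--stabilizer.

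There is, however, one recurring soft spot: in several places you infer the \emph{order} of a subgroup from the fact that its generators satisfy the relations of a Coxeter presentation. That only yields a surjection from the Coxeter group onto the subgroup, hence an upper bound on its order, and an upper bound does not suffice here: since the containment you have is $N_{i,p}\subseteq\mathrm{Stab}_{W(\ddC_n)}(B_{i,p})$, the final orbit--stabilizer step needs $|N_{i,p}|$ to equal $2^{n}p!q!(n-i)!$ exactly, i.e.\ you need the lower bounds. Concretely: (a) for $L_i$ you write that verifying the relations "establishes the presentation of type $\ddC_{n-i}$, yielding $|L_i|=2^{n-i}(n-i)!$" with no injectivity argument -- the paper closes this by noting that the roots of $y_{i+1},r_{i+1},\ldots,r_{n-1}$ form a simple system of type $\ddC_{n-i}$, so the reflection subgroup they generate is genuinely $W(\ddC_{n-i})$; (b) your order count $2^{2q}q!$ for $\langle r_{p+2k-1},t_{j,p}\rangle$ presumes that $\langle t_{j,p}\rangle$ meets $\langle r_{p+2k-1}\rangle$ trivially (otherwise the quotient by the normal $(\Z/2)^q$ is a proper quotient of $W(\ddC_q)$), and the product $|A_{i,p}|=2^pp!\cdot 2^{2q}q!$ presumes $\langle r_0,\ldots,r_{p-1}\rangle$ meets the rest trivially; the paper verifies both by inspecting the $\phi$-images as diagrams. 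All of these are repairable by the device you already deploy for the faithfulness of $\langle t_{j,p}\rangle$ and for $A_{i,p}\cap L_i=\{1\}$ -- push everything through $\phi$ into $W(\ddA_{2n-1})$ and compare supports -- but as written these order statements are asserted rather than proved.
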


\begin{proof} Put
\begin{eqnarray*}
&&A=\langle r_0, r_1,\ldots,r_{p-1} \rangle,\\
&&B=\langle t_{0,p}, t_{1,p},\ldots,t_{q-1,p} \rangle,\\
&&C=\langle r_{p+1}, r_{p+3},\ldots, r_{i-1} \rangle.
\end{eqnarray*}
Being a parabolic subgroup of type $\ddC_p$, the group $A$ is isomorphic to
$W(\ddC_{p})$.  Since the supports of the simple reflections involved in $A$
lie in $\{0,\ldots,p-1\}$ and those of $B\cup C$ lie in $\{p+1,\ldots,i\}$,
each element of $A$ commutes with each element of $B\cup C$.  Now we claim that $B$ is isomorphic to $W(\ddC_{q})$. Ignoring the
$2p$ vertical strands in the middle of generators of $B$ in
$\BrM(\ddA_{2n-1})$, and comparing them with canonical generators of
$W(\ddC_{q})$ in $\BrM(\ddA_{2q-1})$ gives an easy pictorial proof of our
claim.

Consider the diagrams of elements of $B$ and $C$ in $\BrM(\ddA_{2n-1})$.
Each diagram of $B$ only has non-crossing strands starting from $(n-k-1,1)$
and $(n-k+1,1)$, for $k=p+1$, $p+3, \ldots, i-1$, which can never occur in
nontrivial elements of $C$. Hence $B\cap C=\{1\}$.  For the generators of
$B$ and $C$, the following equations hold.
\begin{eqnarray*}
t_{0,p}r_{p+2k-1}t_{0,p}&=&r_{p+2k-1}, \,\mbox{for } \, 1\leq k\leq q
\\
t_{k,p}r_{p+2k-1}t_{k,p}&=& r_{p+2k+1}, \,\mbox{for }\, 1\leq k\leq q
\\
t_{s,p} r_{p+2k-1}t_{s,p}&=& r_{p+2k-1},\,\mbox{for } 1\leq s,\, k \leq q,\, \mbox{and}\, |s-k|>1.
\end{eqnarray*}
Therefore the subgroup $BC$ in $W(\ddC_n)$ is the semiproduct of $C$ and $B$
with $C$ normal.  Consider the diagrams of elements of $BC$ and $A$ in
$\Br(\ddA_{2n-1})$.  Each element in $BC$ keeps the $2p$ strands in the
middle invariant, but each element of $A$ keeps the left $2n-2p+2$ strands
invariant.  Therefore $A\cap BC=\{1\}$.  Thus, $A_{i,p}=BC\times
A$, and hence
$$|A_{i,p}|=|B||C||A|=2^{i}p!q!.$$

The reflections $y_{i+1}$, $r_i$, $r_{i+1}$, $\ldots$, $r_{n-1}$ have roots
$\beta_0+2\beta_1+\cdots+2\beta_i$, $\beta_{i+1}$, $\ldots$, $\beta_{n-1}$,
respectively, which form a simple root system of type
$\ddC_{n-i}$. Therefore the subgroup $L_i$ is isomorphic to $W(\ddC_{n-i})$.

In the Coxeter diagram $\ddA_{2n-1}$ we see that $A_{i,p}\cap L_i=1$ and
all elements in $L_i$ commute with all elements in $A_{i,p}$, so
$N_{i,p}$ is the direct product of $L_i$ and $A_{i,p}$. This gives
\begin{eqnarray*}
|D_{i,p}| &=&\frac{|W(\ddC_n)|}{|A_{i,p}| |L_i|}=\frac{n!}{p!q!(n-i)!}.
\label{eq:sizeD}
\end{eqnarray*}
By Lagrange's
Theorem, the cardinality of $D_{i,p}$ is equal to the size of the
$W(\ddC_n)$-orbit of $B_{i,p}$.
Therefore by Lemma \ref{lem:numip},  $N_{i,p}$
is the stabilizer of $B_{i,p}$ in $W(\ddC_n)$.
\end{proof}

The study of the stabilizer of $B_{i,p}$ will now be used to rewrite
products of
$b_{p,i,p'}$ with elements of $W(\ddC_n)$. The result is in Lemma \ref{anyr}
and needs the following special cases.

\begin{lemma} \label{lm:rfp}
Let $i\in\{0,\ldots,n\}$ and $p\in\{0,\ldots,i\}$ with $i-p$ even.
\begin{enumerate}[(i)]
\item For each  $r\in A_{i,p}$ we have
$r  b_{p,i,i} = b_{p,i,i}$.
\item For each  $v\in L_i$
we have
$ve_{i,p}=e_{i,p}v$
 and
$v b_{p,i,i}= b_{p,i,i} v$.
\end{enumerate}
\end{lemma}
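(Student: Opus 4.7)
The plan is to exploit the factorization $b_{p,i,i} = e_{i,p}\,b_i$ together with the identification $z_k = e_{\gamma_k}$, where $\gamma_k = \beta_0 + 2\beta_1 + \cdots + 2\beta_{k-1}$, so that $b_i = \prod_{k=1}^{i} e_{\gamma_k}$ is a commuting product of long-root quasi-idempotents. The main ingredients will be the commutation relations of Proposition~\ref{X_n}, the defining relations of $\Br(\ddC_n)$, and the extended long/short relations supplied by Proposition~\ref{lem:rel}(iii).

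Part (ii) reduces to pure commutation. Every generator of $L_i$ has index at least $i+1$ and is therefore non-adjacent to each index $p+1,p+3,\dots,i-1$ appearing in $e_{i,p}$; by (\ref{0.1.8}) together with Proposition~\ref{X_n}(i) it commutes with $e_{i,p}$, and by Proposition~\ref{X_n}(i) and (ii) it commutes with every $z_k$ in $b_i$, hence with $b_i$ itself. Combining these yields $v\,e_{i,p} = e_{i,p}\,v$ and $v\,b_{p,i,i} = b_{p,i,i}\,v$.

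For part (i) I treat the four types of generators of $A_{i,p}$ separately. First, each $r_{p+2k-1}$ commutes past the non-adjacent factors of $e_{i,p}$ by (\ref{0.1.8}) and is absorbed by the factor $e_{p+2k-1}$ via (\ref{0.1.4}), so $r_{p+2k-1}\,e_{i,p} = e_{i,p}$, which already implies $r_{p+2k-1}\,b_{p,i,i} = b_{p,i,i}$. Next, for $r_j$ with $j<p$, relation (\ref{0.1.8}) gives $r_j\,e_{i,p} = e_{i,p}\,r_j$, so the problem reduces to showing $r_j\,b_i = b_i$; by Proposition~\ref{X_n}(i), $r_j$ commutes with every $z_k$ except possibly $z_j$ and $z_{j+1}$. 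The case $j=0$ follows from (\ref{0.1.4}); for $j\geq 1$, the pair $(\beta_j,\gamma_j)$ satisfies the hypothesis of Proposition~\ref{lem:rel}(iii), and combining the $(\ref{4.1.5})$- and $(\ref{0.1.19})$-$/$-$(\ref{0.1.18})$-analogues yields the key identity $z_j\,z_{j+1} = e_{\gamma_j}\,e_{\beta_j}\,e_{\gamma_j} = r_j\,z_j\,z_{j+1}$. For $t_{0,p} = y_{p+1}\,r_{p+1}\,y_{p+1}$, I would identify it with the reflection $r_\mu$ for $\mu = \beta_{p+1}+\gamma_{p+1} = r_{\gamma_{p+1}}\,\beta_{p+1}$, verify that $(\mu,\beta_l) = 0$ for every short index $l$ occurring in $e_{i,p}$ (giving commutation with $e_{i,p}$), then rewrite $r_\mu = r_{\gamma_{p+1}}\,r_{\beta_{p+1}}\,r_{\gamma_{p+1}}$ and use Proposition~\ref{lem:rel}(i) to absorb the outer factors into $e_{\gamma_{p+1}}$, reducing the problem to the same key identity as in the $r_j$ case.

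The main obstacle is the family $t_{k,p}$ with $k\geq 1$: these elements are not reflections, and a direct computation shows that they carry $\gamma_{p+2k+1}$ out of the orbit $\fp(B_i)$, so no single conjugation shortcut is available. The identity $t_{k,p}\,b_{p,i,i} = b_{p,i,i}$ must instead be obtained by combining the $t_{k,p}$-swap of $\beta_{p+2k-1}$ and $\beta_{p+2k+1}$ (which permutes corresponding factors of $e_{i,p}$, preserved because these factors commute) with a careful reduction of how the interior reflection $r_{p+2k}$ interacts with the adjacent long factors $e_{\gamma_{p+2k}}$ and $e_{\gamma_{p+2k+1}}$ in $b_i$, again using the chain of identities from Proposition~\ref{lem:rel}(iii). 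I expect this case to require the most intricate bookkeeping of the proof.
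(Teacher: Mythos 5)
Your treatment of part (ii), of the generators $r_j$ with $j<p$ (via the key identity $r_jz_jz_{j+1}=z_jz_{j+1}$ obtained from the $\beta/\gamma$-analogues of (\ref{0.1.19}), (\ref{0.1.18}), (\ref{4.1.5}) supplied by Proposition \ref{lem:rel}(iii)), and of the generators $r_{p+2k-1}$ (absorbed by $e_{p+2k-1}$) coincides with the paper's argument. Your route for $t_{0,p}$ also works, though it is a detour: the paper absorbs $t_{0,p}$ into the \emph{short}-root factor $e_{p+1}$ of $e_{i,p}$ in one line, $t_{0,p}e_{p+1}=y_{p+1}(r_{p+1}y_{p+1}e_{p+1})=y_{p+1}y_{p+1}e_{p+1}=e_{p+1}$, using the (\ref{0.1.14})-analogue from Proposition \ref{lem:rel}(iii), rather than pushing $t_{0,p}$ through to $b_i$.

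The genuine gap is the family $t_{k,p}$ with $k\ge 1$: you explicitly do not prove this case, and the strategy you sketch points in the wrong direction. These generators never need to touch the long-root factors $z_j$ of $b_i$ at all; like $t_{0,p}$, they are absorbed entirely by $e_{i,p}$, namely by the two commuting factors $e_{p+2k-1}$ and $e_{p+2k+1}$ (which are both present in (\ref{fip}) precisely for $1\le k\le q-1$), via a purely simply-laced computation. Writing $a=p+2k$ and using that $r_{a-1}$ and $r_{a+1}$ commute, one has $t_{k,p}e_{a-1}e_{a+1}=r_{a}r_{a+1}(r_{a-1}r_{a}e_{a-1})e_{a+1} \overset{(\ref{0.1.13})}{=}(r_{a}r_{a+1}e_{a})e_{a-1}e_{a+1}\overset{(\ref{0.1.13})+(\ref{0.1.9})}{=}e_{a+1}e_{a}e_{a+1}e_{a-1}\overset{(\ref{3.1.5})}{=}e_{a-1}e_{a+1}$, whence $t_{k,p}e_{i,p}=e_{i,p}$ and $t_{k,p}b_{p,i,i}=b_{p,i,i}$. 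As a side remark, your assertion that $t_{k,p}$ carries $\gamma_{p+2k+1}$ out of $\fp(B_i)$ is not correct: a direct check gives $t_{k,p}\gamma_{p+2k+1}=\gamma_{p+2k-1}$ and $t_{k,p}\gamma_{p+2k}=\gamma_{p+2k+2}$, both still in $\fp(B_i)$; so the "no conjugation shortcut" diagnosis that led you toward the intricate $b_i$-bookkeeping is based on a miscalculation. Without the displayed computation (or an equivalent one), the lemma is not established for all generators of $A_{i,p}$.
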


\begin{proof}
(ii). By Lemma \ref{X_n} and Definition \ref{0.1},
the two equations hold for the generators of $L_i$.
Therefore they hold for each element of $L_i$.

\nl(i).  The roots $\beta_{j}$ and $r_{j}r_{j-1}\cdots r_{1}\beta_0$ are as
in Proposition \ref{lem:rel}(iii), so
\begin{eqnarray*}
r_{j}z_{j}z_{j+1}&=& (r_{j}z_{j}r_j z_j)
r_j\overset{(\ref{0.1.19})}{=}z_{j}(e_j z_j r_j)
\overset{(\ref{0.1.18})}{=} z_{j}e_j z_j \overset{(\ref{4.1.5})}{=}z_{j}r_j z_j r_j=z_{j}z_{j+1}.
\end{eqnarray*}
This proves that (i) is satisfied with $r = r_j$ for $j=0,\ldots, p-1 $.
For the choices $r = r_{p+2k-1}$ for $k = 1,\ldots,q$ this is straightforward.
Moreover,
$t_{0,p} e_{p+1}
=y_{p+1}(r_{p+1}y_{p+1}e_{p+1})=y_{p+1}y_{p+1}e_{p+1}=e_{p+1}$, and
\begin{eqnarray*}
t_{k,p}e_{p+2k-1}e_{p+2k+1}
&=&r_{p+2k}r_{p+2k+1}r_{p+2k-1}r_{p+2k}e_{p+2k-1}e_{p+2k+1}\\
&\overset{(\ref{0.1.13})}{=}&(r_{p+2k}r_{p+2k+1}e_{p+2k})e_{p+2k-1}e_{p+2k+1}\\
&\overset{(\ref{0.1.13})+(\ref{0.1.9})}{=}&(e_{p+2k+1}e_{p+2k}e_{p+2k+1})e_{p+2k-1}\\
&\overset{(\ref{3.1.5})}{=}&e_{p+2k-1}e_{p+2k+1}.
\end{eqnarray*}
So (i) holds for all generators of $A_{i,p}$ and hence for
all of $A_{i,p}$.
\end{proof}

\begin{lemma} \label{anyr}
Suppose $r\in W(\ddC_n)$.
Let $i\in\{0,\ldots,n\}$ and $p\in\{0,\ldots,i\}$ with $i-p$ even.
\begin{enumerate}[(i)]
\item There are $u\in D_{i,p}$ and $v\in L_i$
such that
$r b_{p,i,i}=u b_{p,i,i}v$.
\item There are $u'\in D_{i,p}^{\op}$ and $v'\in L_i$ such that
$b_{i,i,p}r=v'b_{i,i,p}u'$.
\end{enumerate}
\end{lemma}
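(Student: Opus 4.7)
The plan is to leverage the coset decomposition supplied by Lemma \ref{lem:Li}, which tells us that $D_{i,p}$ is a set of left coset representatives for $N_{i,p} = A_{i,p} \times L_i$ in $W(\ddC_n)$. Hence any $r \in W(\ddC_n)$ admits a unique factorization $r = u\,a\,v$ with $u \in D_{i,p}$, $a \in A_{i,p}$ and $v \in L_i$.

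For part (i), I substitute $r = uav$ into $r\,b_{p,i,i}$ and push the factors through $b_{p,i,i}$ using Lemma \ref{lm:rfp}. Part (ii) of that lemma gives $v\,b_{p,i,i} = b_{p,i,i}\,v$, so $v$ slides past $b_{p,i,i}$ to the right. Part (i) of that lemma gives $a\,b_{p,i,i} = b_{p,i,i}$, so $a$ is absorbed. The outcome is $r\,b_{p,i,i} = u\,a\,v\,b_{p,i,i} = u\,a\,b_{p,i,i}\,v = u\,b_{p,i,i}\,v$, as required.

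For part (ii) I use the anti-involution $\op$ from Proposition \ref{prop:opp}. The first step is to verify $b_{i,i,p} = b_{p,i,i}^{\op}$: since each $e_j$ and each $z_k$ is $\op$-invariant, and the factors of $e_{i,p}$ commute by (\ref{0.1.9}) while those of $b_i$ commute by Proposition \ref{X_n}(ii), we get $(e_{i,p} b_i)^{\op} = b_i\, e_{i,p}$, i.e.\ $b_{p,i,i}^{\op} = b_{i,i,p}$. Applying $\op$ to the identity obtained by (i) from $r^{\op}\in W(\ddC_n)$, namely $r^{\op}\,b_{p,i,i} = u\,b_{p,i,i}\,v$, yields
\[
b_{i,i,p}\,r \;=\; v^{\op}\,b_{p,i,i}^{\op}\,u^{\op} \;=\; v^{\op}\,b_{i,i,p}\,u^{\op}.
\]
Since $L_i$ is generated by the $\op$-invariant elements $y_{i+1}, r_{i+1}, \ldots, r_{n-1}$, we have $v^{\op} \in L_i$; and $u^{\op} \in D_{i,p}^{\op}$ by definition. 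Setting $v' = v^{\op}$ and $u' = u^{\op}$ gives the claim.

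The work is almost entirely bookkeeping once Lemma \ref{lm:rfp} and Lemma \ref{lem:Li} are in hand. The one point that requires care is the $\op$-invariance computation $b_{p,i,i}^{\op} = b_{i,i,p}$, because it silently uses that the $z_k$'s appearing in $b_i$ commute (Proposition \ref{X_n}(ii)) and that the odd-indexed $e_j$'s appearing in $e_{i,p}$ commute; this is the only substantive step beyond the coset decomposition.
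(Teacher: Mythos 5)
Your proof is correct and follows essentially the same route as the paper: decompose $r$ via the coset representatives $D_{i,p}$ of $N_{i,p}=A_{i,p}\times L_i$ from Lemma \ref{lem:Li}, absorb the $A_{i,p}$-factor and slide the $L_i$-factor using Lemma \ref{lm:rfp}, and obtain (ii) by applying the anti-involution of Proposition \ref{prop:opp}. Your explicit verification that $b_{p,i,i}^{\op}=b_{i,i,p}$ and that $L_i$ is $\op$-stable is a welcome elaboration of a step the paper leaves implicit, but it is not a different argument.
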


\begin{proof}
Let $r\in W(\ddC_n)$. By Lemma \ref{lem:Li} and Definition
\ref{df:Ai} for $D_{i,p}$,
there exist
$u\in D_{i,p}$, $v\in L_i$, and $a\in A_{i,p}$ such that $r=uva$.
By Lemma \ref{lm:rfp},
\begin{eqnarray*}r b_{p,i,i} &=&u v a  b_{p,i,i}
                = u v    b_{p,i,i}
                =u   b_{p,i,i}v.
\end{eqnarray*}
The second statement follows by applying Proposition \ref{prop:opp} to (i).
\end{proof}

Our next step towards a normal form for elements of $\BrM(\ddC_n)$ is to
describe products of elements from $W(\ddC_n)b_{p,i,p'}W(\ddC_n)$ with
generators $e_j$. To this end we first prove two useful equalities.

\begin{lemma}\label{3fs}
In $\Br(\ddC_{n})$, the following hold for $i\in\{2,\ldots,n-1\}$.
\begin{eqnarray}
e_iz_{i+1}&=&e_i z_{i},\label{ezez1}\\
e_{i-1}z_{i+1}z_{i}z_{i-1}&=&r_{i}r_{i-1}e_{i}z_{i}z_{i+1}z_{i-1},
\label{ezzz}\\
e_{i}z_{i+1}z_{i}e_{i}&=&\delta^2 e_{i}.
\label{ezze}
\end{eqnarray}
\end{lemma}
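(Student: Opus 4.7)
The plan is to recognize that $z_j = e_{\gamma_j}$, where $\gamma_j=\beta_0+2\beta_1+\cdots+2\beta_{j-1}$ is a long positive root with $(\beta_j,\gamma_j)=\pm 1$, so that Proposition~\ref{lem:rel}(iii) applies verbatim: every relation in (\ref{0.1.11})--(\ref{0.1.18}) and (\ref{4.1.2})--(\ref{4.1.5}) holds with the subscripts $1$ and $0$ replaced by $\beta_j$ and $\gamma_j$ respectively. Under this identification, $r_\beta$ becomes $r_j$, $e_\beta$ becomes $e_j$, and $e_\gamma$ becomes $z_j$. I will combine this with Proposition~\ref{X_n}, which gives commutativity of $z_j$ with $e_k$ and $r_k$ for $k\le j-2$ and of $z_j$ with $z_k$ for all $k$.

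For (\ref{ezez1}), I write $z_{i+1}=r_iz_ir_i$, so $e_iz_{i+1}=e_iz_ir_i$ by (\ref{0.1.4}), and then invoke the $(\beta_i,\gamma_i)$ analog of (\ref{0.1.18}), namely $e_iz_ir_i=e_iz_i$. For (\ref{ezze}), I substitute (\ref{ezez1}) to collapse $e_iz_{i+1}z_ie_i$ to $e_iz_i^2e_i$, apply $z_i^2=\delta z_i$ from Proposition~\ref{lem:rel}(i) (since $\gamma_i$ is long), and finish with the analog of (\ref{0.1.16}), $e_iz_ie_i=\delta e_i$, giving $\delta^2e_i$.

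For (\ref{ezzz}), the decisive auxiliary identity is $z_{i+1}z_i=z_ie_iz_i$, obtained from the analog of (\ref{0.1.19}), $r_iz_ir_iz_i=z_ie_iz_i$, together with $r_iz_ir_i=z_{i+1}$. Granted this, the plan is to reduce both sides to $\delta\,e_{i-1}z_{i-1}e_iz_i$. On the left, I substitute the auxiliary identity, apply (\ref{ezez1}) shifted by one to collapse $e_{i-1}z_i$ to $e_{i-1}z_{i-1}$, commute $z_{i-1}$ leftwards past $e_i$ and $z_i$ via Proposition~\ref{X_n}(i)--(ii) (the commutation of $e_i$ with $z_{i-1}$ needs $i\ge 2$, which is exactly the standing hypothesis), and then apply $z_{i-1}^2=\delta z_{i-1}$. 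On the right, I rewrite $r_ir_{i-1}e_i=e_{i-1}e_i$ via (\ref{0.1.13}), commute $z_iz_{i+1}=z_{i+1}z_i$ by Proposition~\ref{X_n}(ii), use (\ref{ezez1}) to produce a factor $z_i^2=\delta z_i$, and rearrange with the same commutations.

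The main obstacle is spotting the auxiliary equality $z_{i+1}z_i=z_ie_iz_i$: without it, the two sides of (\ref{ezzz}) share no obvious common form, and the collapse of both sides to $\delta e_{i-1}z_{i-1}e_iz_i$ relies on it at the very first step on the left. Once that identity is in place, the remainder is disciplined bookkeeping, with the only subtle point being to apply the Proposition~\ref{X_n} commutations only within the valid index ranges.
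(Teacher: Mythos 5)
Your proposal is correct and follows essentially the same route as the paper: identical arguments for (\ref{ezez1}) and (\ref{ezze}), and for (\ref{ezzz}) the same key ingredients, namely the transfer of relation (\ref{0.1.19}) via Proposition~\ref{lem:rel}(iii) (yielding $z_{j+1}z_j=z_je_jz_j$), the commutations of Proposition~\ref{X_n}, and the quasi-idempotency of $z_j$. The only difference is organizational --- the paper left-multiplies by $r_{i-1}r_i$ and transforms one side into the other using the auxiliary identity at index $i-1$, whereas you reduce both sides to the common form $\delta e_{i-1}z_{i-1}e_iz_i$ using it at index $i$ --- and both versions check out.
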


\begin{proof}
By Lemma \ref{lem:rel}(iii),
$e_iz_{i+1}\overset{(\ref{df:zn})}{=}e_i
r_iz_{i}r_i\overset{(\ref{0.1.4})}{=} e_i z_{i}r_i
\overset{(\ref{0.1.18})}{=}e_i z_{i}$. This proves (\ref{ezez1}).

Now (\ref{ezze}) follows from Lemma \ref{lem:rel}(i) as
\begin{eqnarray*}
e_{i}z_{i+1}z_{i}e_{i}&\overset{(\ref{ezez1})}{=}&e_{i}z_{i}^2e_{i}
=\delta e_{i}z_{i}e_{i}
\overset{(\ref{0.1.16})}{=}\delta^2  e_{i}.
\end{eqnarray*}

As for (\ref{ezzz}),
note that (\ref{0.1.19}) and Proposition \ref{lem:rel}(iii) give
$z_{i-1}e_{i-1}z_{i-1} = r_{i-1} z_{i-1}r_{i-1} z_{i-1}=z_i z_{i-1}$.
Hence
\begin{eqnarray*}
r_{i-1}r_{i}e_{i-1}z_{i+1}z_{i}z_{i-1}&\overset{(\ref{0.1.13})}{=}&
e_ie_{i-1}z_{i+1}z_i z_{i-1}
\overset{\ref{X_n}}{=}
e_iz_{i+1} e_{i-1}z_{i}z_{i-1}
\overset{\rm{(\ref{ezez1})}}{=}
\delta e_iz_{i} e_{i-1}z_{i-1}\\
&\overset{\rm{(\ref{ezez1})}+\ref{prop:opp}}{=}&
\delta e_iz_{i-1} e_{i-1}z_{i-1}
= \delta e_i z_{i}z_{i-1}
\overset{\rm{(\ref{ezez1})}}{=}
 e_i z_{i+1} z_{i}z_{i-1},
\end{eqnarray*}
and the equation follows by left multiplication with $r_ir_{i-1}$.
\end{proof}

The detailed information stated in the last sentence of the following
proposition will be needed for the proof of cellularity of
$\Br(\ddC_n,R,\delta)$ in the next section.

\begin{prop}\label{lm:efp}
Let $i,p,p'$ be natural numbers with $0\le p,p'\le i \le n$ and $i-p$ and
$i-p'$ even.  For each root $\beta\in \Psi^+$, there are
$h\in\{i,i+1,i+2\}$, $k$, $m$, $m'\in\N$, $u\in D_{h,m}$,
$w\in D_{h,m'}^{\op}$, and $v\in L_h$ such
that
$$e_{\beta}b_{p,i,p'} = \delta^k u b_{m,h,m'} vw .$$
Moreover,
if $h=i$, then $w=1$ and $m'=p'$, while $k$, $u$, and $v$ do not depend on
$p'$.
\end{prop}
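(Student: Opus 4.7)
The plan is to exploit the symmetric-Brauer-diagram interpretation of $\SBr(\ddA_{2n-1})$ provided by $\phi$ and to compute $e_\beta b_{p,i,p'}$ diagrammatically. By Lemma \ref{lm:BipAct}, the image $\phi(b_{p,i,p'})$ is (up to a power of $\delta$) a symmetric diagram with top $B_{i,p}$ and bottom $B_{i,p'}$. Writing $\Gamma:=\fp^{-1}(\beta)$, which is a single $\sigma$-fixed positive root if $\beta$ is long and a pair of $\sigma$-conjugate positive roots if $\beta$ is short, the image $\phi(e_\beta)=E_\Gamma$ is a $\sigma$-symmetric quasi-idempotent whose top and bottom horizontal strands both equal $\Gamma$. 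Stacking the two symmetric diagrams, I read off the top of the product as $B^t := E_\Gamma\cdot B_{i,p}$ and the bottom as $B^b$, both elements of $\cA_\sigma$ by Proposition \ref{3.5}.

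I distinguish three cases according to how $\Gamma$ meets $B_{i,p}$, using the action rules for $E_\alpha$ on admissible sets recalled in Section \ref{sect:typeA}: (i) $\Gamma\subseteq B_{i,p}$, giving $B^t=B_{i,p}$ together with a factor of $\delta^{|\Gamma|}$ from closed loops, and $h=i$; (ii) every root of $\Gamma$ is orthogonal to every root of $B_{i,p}$, giving $B^t=B_{i,p}\cup\Gamma$ and $h=i+|\Gamma|\in\{i+1,i+2\}$; (iii) some root of $\Gamma$ is neither contained in nor orthogonal to $B_{i,p}$, forcing $|B^t|=|B_{i,p}|$ (so $h=i$) while one or two strands of $B_{i,p}$ are rerouted by the third clause of the action formula. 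Diagrammatic inspection shows that in cases (i) and (iii) only the top portion of $\phi(b_{p,i,p'})$ is affected, so $B^b=B_{i,p'}$; in case (ii) the additional strands of $\Gamma$ appear symmetrically at both top and bottom, so $B^b\in\cA_\sigma$ has the same cardinality $h$ and the same count of $\sigma$-fixed roots as $B^t$.

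With $B^t,B^b\in\cA_\sigma$ of size $h$ identified, I apply Proposition \ref{prop:tran} to obtain $W(\ddC_n)$-elements $u',w'$ with $u'B_{h,m}=B^t$ and $w'B_{h,m'}=B^b$, where $m$ and $m'$ count the $\sigma$-fixed roots of $B^t$ and $B^b$ respectively; after replacing $u'$ and $w'$ by their unique representatives $u\in D_{h,m}$ and $w^{\op}\in D_{h,m'}^{\op}$, the discrepancies land in the stabilizers $N_{h,m}$ and $N_{h,m'}^{\op}$ described in Lemma \ref{lem:Li}. The $A_{h,m}$ and $A_{h,m'}^{\op}$ contributions are absorbed into the middle factor $b_{m,h,m'}$ by Lemma \ref{lm:rfp} and Lemma \ref{anyr}, while the $L_h$ contributions can be commuted past $b_{m,h,m'}$ by Lemma \ref{lm:rfp}(ii) (itself a consequence of Proposition \ref{X_n}) and consolidated into a single factor $v\in L_h$. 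The exponent $k$ is fixed by the number of loops created in the concatenation together with the $\delta$-normalisations in Definition \ref{df:Ki}, and the equality $e_\beta b_{p,i,p'}=\delta^k u\, b_{m,h,m'} v\, w$ then holds by the uniqueness of the normal form of Theorem \ref{th:AmDecomp} applied inside $\SBr(\ddA_{2n-1})$.

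For the \emph{moreover} assertion, the hypothesis $h=i$ corresponds exactly to cases (i) and (iii) above, in which $B^b=B_{i,p'}$; this forces $m'=p'$ and allows $w=1$. Moreover, in both cases the modification of the factorisation $\phi(b_{p,i,p'})=\phi(e_{i,p})\phi(b_i)\phi(e_{i,p'})$ produced by left-multiplying with $\phi(e_\beta)$ takes place strictly above the $\phi(e_{i,p'})$ layer: the bottom rectangle of the diagram is untouched, so the bookkeeping that determines $k$, $u$, and $v$ only involves $e_\beta$, $e_{i,p}$, and $b_i$. Hence $k$, $u$, and $v$ are independent of $p'$. The main obstacle I anticipate is the detailed verification in case (iii), where the reflection appearing in the third clause of the action rule makes direct computation unwieldy; the cleanest approach there is to use Lemma \ref{lm:SimpleRootRels} to conjugate $\beta$ to one of the simple roots $\beta_0$ or $\beta_1$ and then apply the explicit identities of Lemma \ref{3fs} to produce the required factorisation.
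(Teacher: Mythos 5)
There is a genuine logical gap here: the argument is circular. You compute the product by passing through $\phi$ to symmetric Brauer diagrams, identify the diagram of $\phi(e_\beta)\phi(b_{p,i,p'})$, and then invoke the uniqueness statement of Theorem \ref{th:AmDecomp} ``inside $\SBr(\ddA_{2n-1})$'' to conclude. What this actually establishes is only $\phi(e_{\beta}b_{p,i,p'})=\phi(\delta^k u\, b_{m,h,m'}\, v\, w)$. To lift this back to the asserted identity in $\Br(\ddC_n)$ you would need $\phi$ to be injective — but at this point in the paper only surjectivity (Proposition \ref{prop:phisurj}) is available, and injectivity is precisely what Proposition \ref{lm:efp} is needed to prove: it feeds Theorem \ref{th:uniqueform}, which bounds the rank of $\Br(\ddC_n)$ above by $a_{2n}$, and only then does comparison with $\SBr(\ddA_{2n-1})$ yield that $\phi$ is an isomorphism. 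Consequently the proposition must be proved entirely within the abstractly presented algebra, using the defining relations of Definition \ref{0.1} and the identities derived from them (Lemma \ref{3fs}, Lemma \ref{lm:rfp}, Lemma \ref{anyr}, Propositions \ref{X_n} and \ref{lem:rel}); this is what the paper does, in a case analysis over the position of $\beta$ relative to $B_{i,p}$ (three cases for $\beta$ long, seven for $\beta$ short), with induction on $i$ and on $t$ in several of them.

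Your diagrammatic trichotomy ($\fp^{-1}(\beta)$ contained in, orthogonal to, or entangled with $B_{i,p}$) is a sound heuristic: it correctly predicts $h$ and the shape of the normal form, and it would prove the corresponding statement about $\SBr(\ddA_{2n-1})$. But it cannot be repaired into a proof of the statement about $\Br(\ddC_n)$ without replacing each diagram manipulation by a computation with the relations. Your closing suggestion — conjugate $\beta$ to a representative root and use Lemma \ref{3fs} — is indeed the right entry point for such a computation, but note two complications: conjugation also moves $b_{p,i,p'}$, so the conjugating element must be controlled through Lemma \ref{anyr} and the structure of the stabilizer $N_{i,p}$; and the length of $\beta$ alone does not determine the case, since the position of $\beta$ relative to the roots occurring in $B_{i,p}$ matters, which is why the paper needs ten cases rather than two or three. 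A further small slip: in your case (ii) the number of $\sigma$-fixed strands at the bottom of the product need not equal that at the top (compare $b_{p,i,p'}$ with $p\ne p'$), though this does not damage your bookkeeping since $m$ and $m'$ are defined separately.
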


\begin{proof}
We first sketch the general idea of proof. There are only two possible root
lengths in the Coxeter root system $\Psi$ of type $\ddC_n$.  We call
$\beta\in\Psi$ short if $(\beta,\beta) = 1$ and long otherwise, in which
case $(\beta,\beta) = 2$. In each case, only one root needs to be
considered, for all other roots of the same length are conjugate to this
particular one under the natural action of $W(\ddC_n)$, and
Lemma \ref{anyr} can
be applied to reduce to the representative root.

The top of $b_{p,i,p'}$ is the admissible set $B_{i,p}$ displayed in Figure
\ref{fig:Bip}.

First suppose $\beta$ is long.  Then $\beta$ can be written as
$\beta_0+2\beta_1+\cdots+2\beta_{t-1}$, for some $1\leq t\leq n$, and so
$e_{\beta}=z_{t}$.  We will distinguish cases according to relations among
$t$, $i$, and $p$, and apply induction on $t$ and $i$.
In Figure \ref{fig:threestrands} the roots of the Cases L1, L2,
and L3 are displayed as the top, middle, and bottom horizontal strand,
respectively.

\begin{figure}
\begin{center}
\includegraphics[width=.9\textwidth,height=.1\textheight]{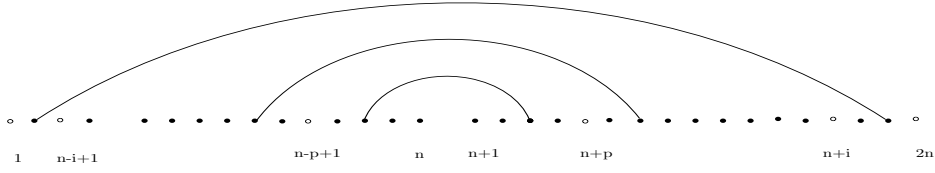}
\end{center}
\caption{Horizontal strands representing the three
cases for a long root $\beta$.}
\label{fig:threestrands}
\end{figure}

\nl
{\bf Case L1.}
Suppose $t\geq i+1$.
For any $t>i+1$,
we have $z_t=sz_{i+1}s^{-1}$,
where $s=r_{t-1}\cdots r_{i+1}$.
This implies that
$z_t b_{p,i,p'}=s z_{i+1}b_{ p,i,p'}s^{-1}$, with $s\in L_i$.
Lemmas \ref{anyr} and \ref{lm:rfp} can be used to reduce this case to the case where
$t=i+1$.

If $p=p'=i$, then $e_{i,p}=e_{i,p'}=1$ and $z_{i+1}b_i=b_{i+1}$ by
the definition of $b_{i}$ in (\ref{eq:bi}), as required.

If $p\neq i= p'$, it suffices to prove the equation
$$z_{i+1}b_{p,i,i}=r b_{p+1,i+1,i+1},$$
with $r$ in the subgroup of $W(\ddC_{n})$ generated by $r_0, r_1, \ldots, r_i$.
We proceed by induction on $i$.
In view of Lemma \ref{3fs}, (below IH is short for Inductive Hypothesis),
\begin{eqnarray*}
z_{i+1}b_{p,i,i}&\overset{(\ref{eq:bi})+(\ref{fip})}{=}&z_{i+1}e_{i-1}e_{i-2,p}z_{i}z_{i-1} b_{i-2}
\overset{\ref{X_n}}{=}e_{i-1}z_{i+1}z_{i}z_{i-1}e_{i-2,p} b_{i-2}\\
&\overset{(\ref{ezzz})}{=}&r_{i}r_{i-1}e_{i}z_{i}z_{i+1}z_{i-1}e_{i-2, p}
b_{i-2}
\overset{\ref{X_n}}{=}r_ir_{i-1}e_{i}(z_{i-1}b_{p,i-2,i-2})z_{i}z_{i+1}\\
&\overset{{\rm IH}}{=}& r_ir_{i-1}e_{i}gb_{p+1,i-1,i-1}z_{i}z_{i+1}
=r_ir_{i-1}ge_{i} b_{p+1,i-1,i-1}z_{i}z_{i+1}\\
&=&r_ir_{i-1}gb_{p+1,i+1,i+1},
\end{eqnarray*}
where $g$ is an element of the subgroup of $W(\ddC_n)$
generated by $r_0, r_1, \ldots, r_{i-2}$.
Hence the claim holds.

The case $p=i\neq p'$ now follows by use of Proposition \ref{prop:opp}.

If $p$, $p'\neq i$ then, by the above,
\begin{eqnarray*}
z_{i+1}b_{p,i,p'}&=&r e_{i+1,p+1}b_{i+1}e_{i,p'}
=r e_{i+1,p+1}b_ie_{i,p'}z_{i+1}
=r e_{i+1,p+1}b_{i+1}e_{i+1,p'+1}r'\\
 &=&r b_{p+1,i+1,p'+1}r',
\end{eqnarray*}
where $r$, $r'\in W(\ddC_n)$. By Lemma \ref{lm:rfp}, we conclude that this
expression can be written in the required form with $h=i+1$.

\nl
{\bf Case L2.} Next suppose $p+1\leq t<i+1$.
By definition of $z_i$,

\begin{eqnarray*} z_{i-1}&=&r_{i-1}z_ir_{i-1},\\
                         z_{i-2}&=&r_{i-2}r_{i-3}r_{i-1}r_{i-2}z_ir_{i-2}r_{i-3}r_{i-1}r_{i-2},
                           \end{eqnarray*}
                           with $r_{i-1}$, $r_{i-2}r_{i-3}r_{i-1}r_{i-2}\in A_{i,p}$. By induction on $t$, we can find $r\in A_{i,p}$
                           such that $z_t=rz_ir^{-1}$. By Lemma \ref{lm:rfp},
                           $$e_{\beta}b_{p,i,p'}=z_tb_{p,i,p'}=
rz_ir^{-1}b_{p,i,p'}=rz_ib_{p,i,p'}.$$
In view of Lemma \ref{anyr},
this reduces the problem to rewriting
$z_ib_{p,i,p'}$ in the required form.

Now
\begin{eqnarray*}
z_i e_{i-1}z_i z_{i-1}&=&(r_{i-1}z_{i-1}r_{i-1}e_{i-1})z_{i-1}z_{i}
\overset{(\ref{0.1.4})+(\ref{4.1.2})}{=}z_{i-1}e_{i-1}z_{i-1} z_{i}\\
&\overset{(\ref{0.1.19})}{=}&(r_{i-1}z_{i-1}r_{i-1})z_{i-1}z_{i}
=z_{i}z_{i-1}z_{i}\\
 &=&\delta z_{i}z_{i-1},
\end{eqnarray*}
so \begin{eqnarray*}
z_ib_{p,i,i}&\overset{(\ref{fip})}{=}&z_i e_{i-1}e_{i-2,p}
z_{i-1}z_{i}b_{i-2}
\overset{\ref{X_n}}{=}z_i e_{i-1} z_{i-1}z_{i}e_{i-2,p}b_{i-2}\\
 &=&\delta z_{i-1}z_{i}b_{p,i-2,i-2},\\
\end{eqnarray*}
by the claim of Case L1 and Lemma \ref{anyr}, the above
can be written as $ub_{p+2,i,i}v$ with $u\in D_{i,p+2}$ and $v\in L_i$,
and so the proposition holds in this case
as $z_ib_{p,i,p'}=ub_{p+2,i,p'}v$.

\nl
{\bf Case L3.} We remain with the case where  $1\leq t\leq p$. We have
\begin{eqnarray*}
z_tb_{p,i,p'} &=&
e_{i,p}
z_tb_ie_{i,p'}
=\delta e_{i,p} b_ie_{i,p'}=\delta b_{p,i,p'},
\end{eqnarray*} and so the proposition holds
with $h=i$.

We next  consider the case where $\beta$ is a short root,
which means $\beta=\beta_{s}+\beta_{s+1}+\cdots+\beta_{t}$ with $0< s\leq
t\leq n-1$ or
$\beta=\beta_{0}+2\beta_1+\cdots+2\beta_{s-1}+\beta_{s}+\beta_{s+1}+\cdots+\beta_{t}$
with $0\leq s\leq t\leq n-1$.  We will distinguish seven cases by values of
$s$ and $t$ corresponding to the horizontal strands of Figure
\ref{fig:sevenstrands}.
The seven cases occur in the order from top to bottom.

\begin{figure}
\begin{center}
\includegraphics[width=.9\textwidth,height=.7\textheight]{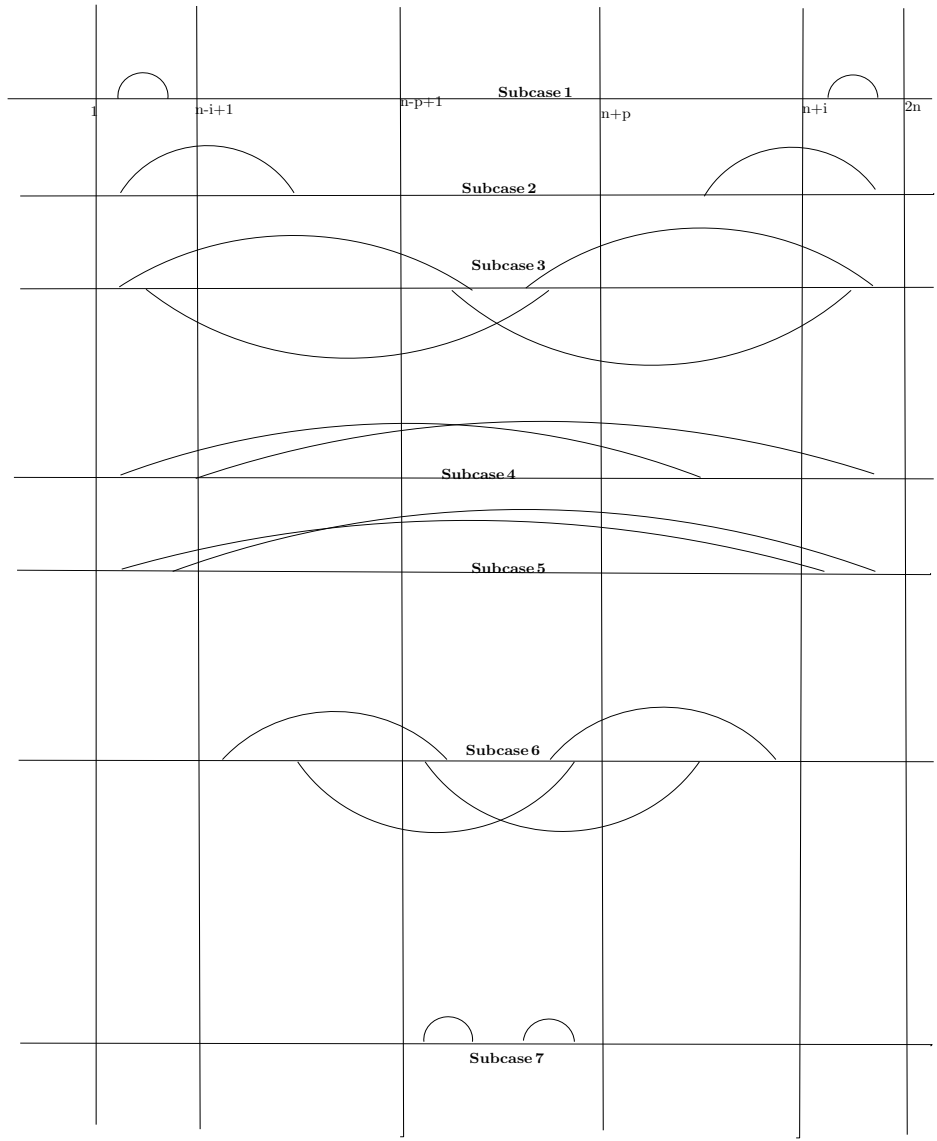}
\end{center}
\caption{Strands corresponding to 7 possibilities for the long root $\beta$.}
\label{fig:sevenstrands}
\end{figure}

\nl {\bf Case S1.} Suppose that $\beta$ is a linear combination of
$\beta_j$ $(j=i+1,\ldots,n-1)$.  Then, by Lemma \ref{3fs},
\begin{eqnarray*}
e_{i+1}b_{p,i,p'}
&=&
e_{i,p}e_{i+1}b_ie_{i,p'}
\overset{(\ref{ezze})}{=}\delta^{-2}e_{i,p}
(e_{i+1}z_{i+2}z_{i+1}e_{i+1})b_ie_{i,p'}\\
&=&\delta^{-2}e_{i,p}e_{i+1}z_{i+2}z_{i+1}b_ie_{i+1}e_{i,p'}\\
&\overset{(\ref{eq:bi})+(\ref{fip})}{=}&\delta^{-2}e_{i+2,p}b_{i+2}e_{i+2,p'}
= \delta^{-2}b_{p,i+2,p'}.
\end{eqnarray*}
At the same time, for any such $\beta\in\Psi^+$, there exists an element
$r\in L_i$ such that $\beta=r \beta_{i+1}$, thus $e_{\beta}=re_{i+1}r^{-1}$.
Now $e_{\beta}b_{p,i,p'}=\delta^{-2}r
b_{p,i+2,p'}r^{-1}$, and hence the proposition holds with $h=i+2$.

\nl
{\bf Case S2.}
Suppose
$\beta=\beta_{s}+\beta_{s+1}+\cdots+\beta_t$, with $p\leq s\leq i\leq t\leq n-1$.

If $p=i$, then $e_{i,p}=1$. First, consider the case $t=s=i$.  Since
\begin{eqnarray*} e_iz_{i+1}z_{i}&\overset{(\ref{ezez1})}{=}&
e_i z_{i}z_i\overset{\ref{lem:rel}(i)}{=}\delta e_i z_i,
\end{eqnarray*}
the use of the claim of Case L1 and Proposition \ref{prop:opp}
gives the existence of an element $r\in W(\ddC_n)$ such that
\begin{eqnarray*}
e_{i}b_{i,i,p'}&=&\delta^{-1}e_iz_ib_ie_{i,p'}
\overset{\ref{lem:rel}(i)}{=}\delta^{-1} e_iz_{i+1}b_ie_{i,p'} 
\overset{\ref{X_n}}{=}\delta^{-1} e_ib_{i,i,p'}z_{i+1}\\
&\overset{{\rm L1}}{=}&\delta^{-1} e_ib_{i+1,i+1,p'}r
=\delta^{-1} b_{i-1,i+1,p'}r,
\end{eqnarray*}
as required.

If $t\neq s$, then
$e_\beta=r_{\beta''}r_{\beta'}e_ir_{\beta'}r_{\beta''}$, where
$\beta'=\beta_s+\cdots+\beta_{i-1}$, $\beta''=\beta_{i+1}+\cdots+\beta_{t}$,
which implies that $r_{\beta'}\in A_{i,p}$, $r_{\beta''}\in L_i$, and hence
$e_\beta b_ie_{i,p'}=r_{\beta''}r_{\beta'}e_i b_ie_{i,p'}r_{\beta''}$.
As in the argument for $e_ib_{i,i,p'}$ above, this can be
written in the required form with $h=i+1$.

On the other hand if $p\neq i$ , then $e_{i,p} \neq 1$. Therefore for
$\beta=\beta_{s}+\beta_{s+1}+\cdots+\beta_t$, with $p\leq s\leq i\leq t\leq
n-1$, since there is some $l\in \{p+1, p+3,\ldots, i-1\}$, such that
$e_\beta e_l=r_l r_{\beta}e_l$, implying that $e_{\beta}e_{i,p}=r_l
r_{\beta}e_{i,p}$ and $e_{\beta}b_{p,i,p'}=r_l r_{\beta}b_{p,i,p'}$. By
Lemma \ref{anyr}, the proposition holds with $h=i$.

\nl
{\bf Case S3.}
Suppose $\beta=\beta_{s}+\beta_{s+1}+\cdots+\beta_t$
or  $\beta=\beta_{0}+2\beta_1+\cdots+2\beta_{s-1}+\beta_{s}+\cdots+\beta_t$ with $0<s\leq p$ and $i\leq t\leq n-1$.

First consider  $\beta=\beta_{p}+\cdots +\beta_{i}$. Following the argument for
$e_iz_{i+1}z_i=\delta e_i z_i$ in the above case,
we find that $e_\beta z_{i+1}z_i=\delta e_\beta z_i$ holds,
which implies
\begin{eqnarray}\label{subcase3}
e_{\beta}b_{p,i,p'}=e_{i,p}e_{\beta}b_ie_{i,p'}=\delta^{-1}e_{i,p}e_{\beta}b_{i+1}e_{i,p'}.
\end{eqnarray}
Observe that
\begin{eqnarray*}
r_{i-1}r_{i}e_{\beta}e_{i-1}&=&
e_{\beta-\beta_{i}-\beta_{i-1}}r_{i-1}r_{i}e_{i-1}r_i r_{i-1}r_{i-1}r_{i}
\overset{(\ref{0.1.15})}{=}
e_{\beta-\beta_{i}-\beta_{i-1}}e_{i}r_{i-1}r_{i},\\
\mbox{and }\, r_{i-1}r_{i}b_{i+1}&\overset{\ref{lm:rfp}}{=}&b_{i+1}.
\end{eqnarray*}
Therefore (\ref{subcase3}) can be written as
\begin{eqnarray*}\delta^{-1}e_{i,p}e_{\beta}b_{i+1}e_{i,p'}
&=&\delta^{-1}(e_{i-1}r_{i}r_{i-1}) e_{i-2,p}e_{\beta-\beta_{i}-\beta_{i-1}}b_{i+1}e_{i,p'}\\
&\overset{(\ref{0.1.15})}{=}&\delta^{-1}r_{i}r_{i-1}e_{i} e_{i-2,p}e_{\beta-\beta_{i}-\beta_{i-1}}b_{i+1}e_{i,p'}\\
&=&\delta^{-1}r_{i}r_{i-1}e_{i} (e_{\beta-\beta_{i}-\beta_{i-1}}e_{i-2,p}b_{i-2})z_{i-1}z_{i}z_{i+1}e_{i,p'}.
\end{eqnarray*}
By induction on $i$, we can use an argument  as in the claim of Case S1,
and the  above can be written as
\begin{eqnarray*}\delta^{-1}r_{i}r_{i-1}e_{i}
ge_{i-1,p-1}b_{i-1}z_{i-1}z_{i}z_{i+1}e_{i,p'}
&=&r_{i}r_{i-1}ge_{i} e_{i-1,p-1}b_{i-1} z_{i}z_{i+1}e_{i,p'}\\
&=&r_{i}r_{i-1}g e_{i+1,p-1}b_iz_{i+1}e_{i,p'}\\
&=&r_{i}r_{i-1}g e_{i+1,p-1}b_{i,i,p'}z_{i+1},
\end{eqnarray*}
where $g\in W(\ddC_n)$ is a
product of elements from $r_0, r_1, \ldots, r_{i-2}$.  By
Case L1 and Proposition \ref{prop:opp}, the proposition
holds with $h=i+1$.

We return to the general setting of Case S3.  Then there exists $r'\in
L_{i}$ and $r'' \in A_{i,p}$ such that $r' r'' \hat\beta=\beta$, with
$\hat\beta=\beta_{p}+\cdots +\beta_{i}$.  Then $$e_{\beta}b_{p,i,p'}=r' r''
e_{\hat\beta}b_{p,i,p'} r' ,$$ hence the proposition holds in Case S3 due to Lemma \ref{anyr}.

\nl {\bf Case S4.}  If
$\beta=\beta_{0}+2\beta_1+\cdots+2\beta_{s-1}+\beta_{s}+\cdots+\beta_t$ with
$p\leq s\leq i\leq t\leq n-1$, and let $\beta'=\beta_{s}+\cdots+\beta_t$. Then $e_{\beta}=y_{s}e_{\beta'}y_{s}.$ When $p=i=s$, we see that $y_s\in
A_{i,p}$; when $p\neq i$, there is some $l\in \{p+1, p+3,\ldots, i-1\}$,
such that $e_\beta e_l=r_l r_{\beta}e_l$.  This brings us back to the
argument of Case S2.

\nl
{\bf Case S5.}
If $\beta=\beta_{0}+2\beta_1+\cdots+2\beta_{s}+\beta_{s+1}+\cdots+\beta_t$, with $i<
s\leq t\leq n-1$, then $\beta=y_{s+1}\beta'$, with
$\beta'=\beta_{s+1}+\beta_{s+2}+\cdots+\beta_t$, and  $y_{s+1}\in L_i$.
Therefore
$$e_{\beta}b_{p,i,p'}=y_{s+1}(e_{\beta '}b_{p,i,p'})y_{s+1},$$
and we are back in Case S1. 

\nl {\bf Case S6.}  If
$\beta=\beta_{0}+2\beta_1+\cdots+2\beta_{s}+\beta_{s+1}+\cdots+\beta_t$ or
$\beta=\beta_{s}+\beta_{s+1}+\cdots+\beta_t$, with $0\leq s\leq t$ and $p
\leq t\leq i-1$, then there must be some $e_{j}\in \{
e_{p+2j-1}\}_{j=1}^{(i-p)/2}$ such that $\beta$ is not orthogonal to
$\beta_j$, or $\{\beta, \beta_j\}$ is not admissible, or $\beta=\beta_j$.
Then, by Lemma \ref{lem:rel}, there exists $r\in W(\ddC_n)$ such
that $e_\beta e_{i,p}=re_{i,p}$ or $e_\beta e_{i,p}=\delta re_{i,p}$.
This implies that the proposition holds with $h=i$.

\nl {\bf Case S7.}
If $\beta$ can be written as a linear combination of $\{\beta_j\}_{j=0}^{p-1}$, then
$\beta$ is conjugate to $\beta_{p-1}$ under the subgroup of $A_{i,p}$ generated by $\{r_j\}_{j=0}^{p-1}$.
 Then we can find a $r\in A_{i,p}$ such that $r \beta_{p-1}=\beta$,
which implies  
\begin{eqnarray*}
e_{\beta}b_{p,i,p'}&=&
r e_{p-1}r^{-1} b_{i,i,p'}
\overset{\ref{lm:rfp}(i)}{=}r e_{p-1} b_{i,i,p'}
=r b_{p-2,i,p'},
\end{eqnarray*}
so the proposition holds
with $h=i$
due to Lemma \ref{anyr}.
\end{proof}

\begin{thm} \label{th:uniqueform}
Each element in the monoid $\BrM(\ddC_n)$ can be written as
$$\delta^k u b_{p,i,p'} v  w^{\op},$$
where
$k\in \Z$ and  $i,p,p'\in\{0,\ldots, n\}$ with $i-p$ and
$i-p'$ even,
$u\in D_{i,p}$, $v\in L_i$,  and $w\in D_{i,p'}$. In
particular, $\Br(\ddC_{n})$ is free of rank at most $a_{2n}$.
\end{thm}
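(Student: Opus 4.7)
The plan is induction on the length $L$ of a representation of $m\in\BrM(\ddC_n)$ as a word in $\{\delta^{\pm1}, r_0,\dots,r_{n-1}, e_0,\dots,e_{n-1}\}$. For $L=0$, the identity belongs to the stated form via $i=p=p'=0$, $b_{0,0,0}=1$, $D_{0,0}=\{1\}$, and $L_0=W(\ddC_n)$. Assume the claim for words of length $L-1$, and write $m=m'g$ with $m'=\delta^k u b_{p,i,p'}vw^{\op}$ in normal form and $g$ a single letter; the case $g=\delta^{\pm1}$ is trivial.

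For $g=r_j$, using $r_j^{\op}=r_j$ gives $w^{\op}r_j=(r_jw)^{\op}$, so I decompose $r_jw=u'a$ via the left coset partition of $W(\ddC_n)$ by $N_{i,p'}=A_{i,p'}\times L_i$ (Lemma \ref{lem:Li}), write $a=a_Aa_L$ in the direct product, and observe that $A_{i,p'}$ is closed under $\op$ (its generators $r_k$ and $t_{k,p'}$ are visibly $\op$-invariant). The $\op$-dual of Lemma \ref{lm:rfp}(i) then yields $b_{p,i,p'}a_A^{\op}=b_{p,i,p'}$, while $a_L^{\op}\in L_i$ commutes with $b_{p,i,p'}$ by Lemma \ref{lm:rfp}(ii), producing $m=\delta^k u b_{p,i,p'}(va_L^{\op})u'^{\op}$ in normal form. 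For $g=e_j=e_{\beta_j}$, Lemma \ref{lm:SimpleRootRels} and $w^{\op}=w^{-1}$ allow me to conjugate $e_j$ through $w^{\op}$ and $v$, giving
\[m=\delta^k u\, b_{p,i,p'}\, e_\gamma\, v w^{\op},\qquad \gamma=vw^{-1}\beta_j.\]
The $\op$-dual of Proposition \ref{lm:efp} rewrites $b_{p,i,p'}e_\gamma=\delta^{k'}\tilde u\, b_{m_1,h,m_2}\,\tilde v\,\tilde w$ with $\tilde u\in D_{h,m_1}$, $\tilde v\in L_h$, $\tilde w\in D_{h,m_2}^{\op}$, and $h\in\{i,i+1,i+2\}$. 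Setting $u\tilde u=u_1\tilde a$ in the coset decomposition modulo $N_{h,m_1}$ and splitting $\tilde a=\tilde a_A\tilde a_L$ in $A_{h,m_1}\times L_h$, Lemma \ref{lm:rfp}(i) absorbs $\tilde a_A$ into $b_{m_1,h,m_2}$, and the $\op$-dual of Lemma \ref{anyr}(ii) applied to the residual right-hand Weyl-group element $\tilde a_L\tilde v\tilde w v w^{\op}$ converts it into $v''W_1^{\op}$ with $v''\in L_h$ and $W_1\in D_{h,m_2}$, yielding the desired normal form.

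The parameter count gives the rank bound
\[\sum_{i=0}^n\sum_{\substack{0\le p,p'\le i\\ p\equiv p'\equiv i\,(\mathrm{mod}\,2)}}|D_{i,p}|\,|L_i|\,|D_{i,p'}|=\sum_{i=0}^n\left(\sum_{p+2q=i}\frac{n!}{p!q!(n-i)!}\right)^{2}2^{n-i}(n-i)!=a_{2n},\]
by Lemma \ref{lem:numip}, Lemma \ref{lem:Li}, and Corollary \ref{lemma:anformproof}, so $\Br(\ddC_n)$ has rank at most $a_{2n}$ as a $\Z[\delta^{\pm1}]$-module.

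The main obstacle is the $g=e_j$ step, where Proposition \ref{lm:efp} may enlarge $i$ to $h\in\{i,i+1,i+2\}$ and reshuffle left- and right-coset data across the $b_{m_1,h,m_2}$ core. It is the \emph{moreover} clause of that proposition (triviality of the right-coset factor when $h=i$, and independence of $k$, $u$, and $v$ from $p'$) that keeps the iterated rewriting consistent and makes the induction close within the stated normal form.
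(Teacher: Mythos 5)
Your proposal is correct and follows essentially the same route as the paper: the paper shows the set of elements in the stated normal form is closed under left multiplication by generators (using Lemma \ref{anyr}, Lemma \ref{lm:rfp}, and Proposition \ref{lm:efp}) and contains the identity, while you run the mirror-image argument by induction on word length using right multiplication and the $\op$-duals of the same lemmas, with the same rank count via Lemma \ref{lem:numip}, Lemma \ref{lem:Li}, and Corollary \ref{lemma:anformproof}. Your version is in fact slightly more explicit than the paper's about recombining the coset data after applying Proposition \ref{lm:efp} (a step the paper compresses into ``this belongs to $U$ again'').
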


\begin{proof} Let $U$ be the set of elements of
$\BrM(\ddC_n)$ of the indicated form. We show that $U$ is invariant under
left multiplication by generators of
$\BrM(\ddC_n)$. To this end, consider an arbitrary
element $a = \delta^k ub_{p,i,p'} v  w^{\op}$ of $U$.
Obviously $\delta^{\pm1}a\in U$.
Without loss of generality, we may take $k=0$.

Let $r\in W(\ddC_n)$.  By Lemma \ref{anyr} applied to $ru$ there are $u'\in
D_{i,p}$ and $v'\in L_i$ such that $ra =  rub_{p,i,p'}
v w^{\op}$.  By Lemma \ref{lm:rfp}, this is equal to $  u'
b_{p,i,p'}v'v w^{\op}$ and, as $v'v\in L_i$, the set $U$ is
invariant under left multiplication by Weyl group elements.

Finally, consider the generator $e_j$ of $\BrM(\ddC_n)$.
Writing $\beta = u^{\op}\alp_j$ we have
$e_ja =  e_jub_{p,i,p'}vw^{\op}=
 u e_{\beta}b_{p,i,p'}vw^{\op}$
and by Proposition \ref{lm:efp}
this belongs to $U$ again.

Now, by Proposition \ref{prop:opp}
we also find that $U$ is invariant under right multiplication by generators.
This proves that $U$ is invariant under both left and right multiplication
by any generator of $\BrM(\ddC_n)$.
As it contains the identity $(b_{0,0,0})$, it follows that $U$ coincides with
the whole monoid.

As for the last assertion of the theorem, observe that freeness of
$\Br(\ddC_n)$ over $\Z[\delta^{\pm1}]$ is immediate from the fact that it is
a monomial algebra with a finite number of generators.
By the first assertion, its rank
is at most
$$\sum_{i=0}^n
\left(\sum_{p\equiv i\pmod2} |D_{i,p}|\right)^2\cdot |L_i|.$$
By Lemma \ref{lem:numip},
the cardinality of  $D_{i,p}$ is $n!/(p!q!(n-i)!)$, where $q =
(i-p)/2$,
and,
by Lemma \ref{lem:Li},
$L_i$ is isomorphic to $W(\ddC_{n-i})$, which has
$2^{n-i}(n-i)!$ elements.
Therefore, the rank of
$\Br(\ddC_n)$ over $\Z[\delta^{\pm1}]$ is at most
$$
\sum_i \left(\sum_{p,q: p+2q=i}
\frac{n!}{p!q!(n-i)!}\right)^2  2^{n-i}(n-i)! .
$$
Corollary \ref{lemma:anformproof} gives that this sum is equal to $a_{2n}$.
\end{proof}

\bigskip
\label{sect:phi}
We are now ready to prove Theorem \ref{thm:main}.
Theorem
\ref{th:uniqueform} shows that $\Br(\ddC_{n})$ is free of rank at most
$a_{2n}$.
By Proposition \ref{prop:phisurj},
the homomorphism
$\phi: \Br(\ddC_n) \to \SBr(\ddA_{2n-1})$
is surjective, so the rank of
$\Br(\ddC_n)$
is at least the rank of
$\SBr(\ddA_{2n-1})$, which is known to be $a_{2n}$ by
Corollary \ref{cor:3.4}.
Thus, the ranks of
$\Br(\ddC_n)$ and $\SBr(\ddA_{2n-1})$ coincide and
$\phi$ is an isomorphism.
\end{section}

\begin{section}{Further properties of type $\ddC$ algebras}
\label{sect:cellular}

In this section we prove that the algebra $\BrM(\ddC_n,R,\delta)$ is
cellular, in the sense of Graham and Lehrer \cite{GL1996}, provided $R$ is an integral domain containing the inverse to
$2$. The proof given here runs parallel to the proof of the
corresponding result for $\ddD_n$ in \cite[Section~6]{CGW2006}.  We finish
by discussing a few more desirable properties of the newly found Brauer
algebras.

Recall from \cite{GL1996} that an associative
algebra $\alg$ over a commutative ring $R$ is cellular if there is a quadruple
$(\Lambda, T, C, *)$ satisfying the following three conditions.

\begin{itemize}
\item[(C1)] $\Lambda$ is a finite partially ordered set.  Associated to each
$\lambda \in \Lambda$, there is a finite set $T(\lambda)$.  Also, $C$ is an
injective map
$$ \coprod_{\lambda\in \Lambda} T(\lambda)\times T(\lambda) \rightarrow \alg$$
whose image is an $R$-basis of $\alg$.

\item[(C2)]
The map $*:\alg\rightarrow \alg$ is an
$R$-linear anti-involution such that
$C(x,y)^*=C(y,x)$ whenever $x,y\in
T(\lambda)$ for some $\lambda\in \Lambda$.

\item[(C3)] If $\lambda \in \Lambda$ and $x,y\in T(\lambda)$, then, for any
element $a\in \alg$,
$$aC(x,y) \equiv \sum_{u\in T(\lambda)} r_a(u,x)C(u,y) \
\ \ {\rm mod} \ \alg_{<\lambda},$$ where $r_a(u,x)\in R$ is independent of $y$
and where $\alg_{<\lambda}$ is the $R$-submodule of $\alg$ spanned by $\{
C(x',y')\mid x',y'\in T(\mu)\mbox{ for } \mu <\lambda\}$.
\end{itemize}
Such a quadruple $(\Lambda, T, C, *)$ is called a {\em cell datum} for
$\alg$.  We will describe such a quadruple for $\Br(\ddC_n,R,\delta)$.

For $*$ we will use the
anti-involution $\op$ determined in Proposition \ref{prop:opp}.
Let $i \in\{0,\ldots,n\}$.
By Theorem \ref{th:uniqueform}, each element in the monoid
$\BrM(\ddC_n)$
can be written in the form
$$\delta^k u b_{p,i,p'} v w^{\op},$$ where $k\in \Z$ and
$i,p,p'\in\{0,\ldots, n\}$ are such that $i-p$ and $i-p'$ are even, $u\in
D_{i,p}$, $v\in L_i$, and $w\in D_{i,p'}$. As the coefficient ring $R$
is an integral domain containing the inverse of $2$,
it satisfies the conditions of \cite[Theorem 1.1]{G2007}, so by
\cite[Corollary 3.2]{G2007} the group rings $R[L_i]$ $(i=0,\ldots,n)$
are all
cellular. By Lemma \ref{lem:Li},
the subalgebra $RL_i$ of $\Br(\ddC_n,R,\delta)$ (with unit $\hb^{(i)}$,
see (\ref{eq:ehat}))
generated by $L_i$ is isomorphic to $R[L_i]$.
Let $(\Lambda_{i},T_i, C_i, *_{i})$ be a cell datum for $RL_i$ an in
\cite{G2007}.  Observe that the generators of $L_i$ in Definition
\ref{df:Ai} are fixed by $\op$, so $RL_i$ is $\op$-invariant.  By
\cite[Section 3]{G2007}, $*_{i}$ is the map $\op$ on $RL_i$ and so $*_{i}$
is the restriction of $\op$ to $RL_i$.

The underlying poset $\Lambda$ will be $\{B_i\}_{i=0}^{n}$, as defined in (\ref{eq:Bi}). We
say that $B_i>B_j$ if and only if $i<j$ or, equivalently, $B_i\subset B_j$.
In particular, $\emptyset$ is the greatest element of $\Lambda$.

The set $T(B_i)$ is taken to be the set of all triples $(u,e_{i,p},s)$ where
$u\in D_{i,p}$ (see Definition \ref{df:Ai}), $p \in \{0, \ldots, i\}$ with $i - p$ even, and the product $e_{i,p}$ is given by (\ref{fip}), and $s\in T_i$.  Clearly, this set is finite.

The map $C$ is given by
$C((u,e_{i,p},s),(w,e_{i,p'},t))=ue_{i,p}C_i(s,t)e_{i,p'}w^{\op}$.  By Lemma
\ref{lm:rfp}, $ub_{p,i,p'}vw^{\op} = (ue_{i,p})(b_iv)(we_{i,p'})^{\op}$, so
the image of $C$ is a basis by Theorems \ref{thm:main} and
\ref{th:uniqueform}, and the fact that $\{C_i(s,t)\mid s, t\in T_i\}$ is a
basis for $RL_i$ (which is a consequence of (C1) for $(\Lambda_{i},T_i, C_i,
*_{i})$).  This gives a quadruple $(\Lambda, T, C, *)$ satisfying (C1).

For (C2) notice that
$(ue_{i,p}C_i(s,t)e_{i,p'}w^{\op})^{{\op}} =
we_{i,p'}C_i(s,t)^{{\op}}e_{i,p}u^{\op}$.
Now $C_i(s,t)^{{\op}}=C_i(t,s)$, by the cellularity condition (C2) for
$RL_i$ and so (C2) holds for the cell datum $(\Lambda, T, C, *)$.

Finally, we check condition (C3) for $(\Lambda, T, C, *)$. It suffices to
consider the left multiplications by $r_j$ and $e_j$ of
$ue_{i,p}C_i(s,t)e_{i,p'}w^{\op} $.  Up to linear combinations, we can
replace the latter expression by $ue_{i,p}b_ive_{i,p'}w^{\op} $ for $v\in
L_i$.  Now, by Lemma \ref{anyr} for $r_j$ and Proposition \ref{lm:efp} 
for $e_j$ (note the product lies in
$\Br(\ddC_n,R,\delta)_{< B_i}$ if $h> i$), (C3) holds for the cell datum
$(\Lambda, T, C, *)$.  Therefore we have now proved

\begin{thm}\label{th:cellular}
Let $R$ be an integral domain with $2^{-1}\in R$.  Then the quadruple
$(\Lambda, T, C, *)$ is a cell datum for $\Br(\ddC_n,R,\delta)$, proving
the algebra is cellular.
\end{thm}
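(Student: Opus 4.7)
The plan is to exhibit an explicit cell datum $(\Lambda, T, C, \ast)$ for $\Br(\ddC_n, R, \delta)$, using the normal form obtained in Theorem~\ref{th:uniqueform} as the combinatorial backbone. First, I would take $\ast$ to be the anti-involution $\op$ of Proposition~\ref{prop:opp}. For the poset, I would set $\Lambda = \{B_0, B_1, \ldots, B_n\}$ with the opposite of the inclusion order, so that $B_i > B_j$ iff $i < j$; note that $B_i$ has height $0$ and records the size-$i$ ``horizontal part'' of a monomial. The index set $T(B_i)$ will consist of all triples $(u, e_{i,p}, s)$ where $p \in \{0, \ldots, i\}$ satisfies $i - p$ even, $u \in D_{i,p}$, and $s$ ranges over the index set $T_i$ of a chosen cell datum for the group algebra $RL_i$ (which exists by Geck's theorem \cite{G2007}, applicable since $R$ is an integral domain containing $\tfrac12$ and $L_i \cong W(\ddC_{n-i})$ by Lemma~\ref{lem:Li}).

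Next I would define $C((u, e_{i,p}, s), (w, e_{i,p'}, t)) = u\, e_{i,p}\, C_i(s,t)\, e_{i,p'}\, w^{\op}$, where $C_i$ comes from the cell datum for $RL_i$. Condition (C1) — that this is an $R$-basis of $\Br(\ddC_n, R, \delta)$ — follows from Theorem~\ref{thm:main} combined with Theorem~\ref{th:uniqueform}, after observing via Lemma~\ref{lm:rfp} that $u\, b_{p,i,p'} v\, w^{\op} = (u e_{i,p})(b_i v)(w e_{i,p'})^{\op}$, and since $\{C_i(s,t)\}$ is a basis of $RL_i$. Condition (C2) is immediate from the $\op$-invariance of $e_{i,p}$ and the relation $C_i(s,t)^{\op} = C_i(t,s)$ given by cellularity of $RL_i$ (noting that the generators of $L_i$ listed in Definition~\ref{df:Ai} are all $\op$-fixed, so $\op|_{RL_i} = \ast_i$).

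The main obstacle, and the only real verification, will be condition (C3): for any generator $g \in \{r_j, e_j\}$, I must show that $g \cdot u\, e_{i,p}\, C_i(s,t)\, e_{i,p'}\, w^{\op}$ is congruent, modulo the span $\Br(\ddC_n,R,\delta)_{<B_i}$ of basis elements indexed by $B_h$ with $h > i$ (i.e.\ strictly larger admissible sets in our order, meaning strictly smaller cardinality is NOT what we want — rather we need $h > i$ in size), to a combination $\sum r_g(u', u)\, u'\, e_{i,p}\, C_i(s,t)\, e_{i,p'}\, w^{\op}$ whose coefficients depend only on the ``left data'' $(u, e_{i,p}, s)$. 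For $g = r_j$, Lemma~\ref{anyr} rewrites $r_j u\, b_{p,i,p'}$ as $u'\, b_{p,i,p'}\, v'$ for some $u' \in D_{i,p}$ and $v' \in L_i$, and the $v'$-factor is absorbed into the middle $RL_i$-component where cellularity of $RL_i$ then supplies the required congruence (the size parameter $i$ is preserved, so no reduction modulo $\Br(\ddC_n, R, \delta)_{<B_i}$ is needed). For $g = e_j$, writing $\beta = u^{\op}\alpha_j$ gives $e_j u\, b_{p,i,p'} = u\, e_\beta\, b_{p,i,p'}$, and Proposition~\ref{lm:efp} provides an expression $\delta^k u'\, b_{m,h,m'}\, v\, w'$ with $h \in \{i, i+1, i+2\}$; the cases $h > i$ contribute to $\Br(\ddC_n, R, \delta)_{<B_i}$, while the crucial case $h = i$ yields $w' = 1$ and $m' = p'$, with $k$, $u'$, and $v$ independent of $p'$ (and hence of the right-hand data), exactly matching the shape required by (C3). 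This last independence assertion, already built into the statement of Proposition~\ref{lm:efp}, is the key point that makes the whole construction work; with it in hand, the verification of (C3) reduces to the cellularity of $RL_i$, completing the proof.
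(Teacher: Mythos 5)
Your proposal is correct and follows essentially the same route as the paper: the same cell datum $(\Lambda,T,C,\op)$ built from the normal form of Theorem \ref{th:uniqueform}, Geck's cellularity of $RL_i$, and the verification of (C1)--(C3) via Lemma \ref{lm:rfp}, Lemma \ref{anyr}, and the independence clause of Proposition \ref{lm:efp}. The only cosmetic issue is the slightly garbled parenthetical about the ordering, but your conclusion there (that $\Br(\ddC_n,R,\delta)_{<B_i}$ is spanned by elements indexed by $B_h$ with $h>i$, i.e.\ larger admissible sets) is the correct one and matches the paper.
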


We continue by discussing some desirable properties of the Brauer algebra
$\Br(\ddC_n)$. First of all, for any disjoint union $M$ of diagrams of type
$Q$, for $Q$ a simply laced graph, and $\ddC_k$ for $k\in\N$, the Brauer
algebra is defined as the direct product of the Brauer algebras whose types
are the components of $X$. The next result states that parabolic subalgebras
behave well.

\begin{prop}\label{prop:parabolic}
Let $J$ be a set of nodes of the Dynkin diagram $\ddC_n$.
Then the parabolic subalgebra of the Brauer algebra $\Br(\ddC_n)$,
that is, the subalgebra generated by $\{r_j,e_j\}_{j\in J}$,
is isomorphic to the Brauer algebra of type $J$.
\end{prop}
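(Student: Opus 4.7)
The plan is to build a surjective homomorphism from the Brauer algebra of type $J$ onto the parabolic subalgebra and then match ranks to get injectivity. First I would unpack the type: the subdiagram of $\ddC_n$ induced on $J$ decomposes as a disjoint union whose components are of type $\ddA_{m_i}$ for those not meeting the node $0$ and, if $0\in J$, a single component of type $\ddC_k$, where $k$ is the size of the connected segment through $0$. Write $Q$ for this induced diagram; by the convention stated just before the proposition, $\Br(Q)$ is the corresponding product of Brauer algebras of simply-laced type and possibly one factor of $\ddC_k$-type.

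Next I would define the map $\psi:\Br(Q)\to\Br(\ddC_n)$ by sending each canonical generator of $\Br(Q)$ with index $j\in J$ to $r_j$ or $e_j$ of $\Br(\ddC_n)$ as appropriate. Every defining relation in the presentations of Definitions \ref{1.1} and \ref{0.1} involves at most two indices $\{i,j\}$, and the specific relation used there depends only on the subdiagram of $\ddC_n$ induced on $\{i,j\}$. Since $J$ inherits its subdiagram from $\ddC_n$, each defining relation of $\Br(Q)$ is tautologically one of the defining relations among the images in $\Br(\ddC_n)$. Hence $\psi$ is well defined, and by construction its image is the parabolic subalgebra, so $\psi$ is surjective.

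For injectivity I would count ranks. By Theorem \ref{thm:main} and the classical rank formula for the type-$\ddA$ Brauer algebra, the rank of $\Br(Q)$ over $\Z[\delta^{\pm1}]$ equals $a_{2k}\prod_i (m_i+1)!!$, with the convention $a_0=1$ used if $0\notin J$. On the other side, restricting the normal form of Theorem \ref{th:uniqueform} to those monomials $\delta^k u b_{p,i,p'} v w^{\op}$ all of whose factors lie in the submonoid generated by $\{r_j,e_j\mid j\in J\}$ yields a spanning set of the parabolic subalgebra. Organising the count by the recipe of Definition \ref{df:Ai} and Lemma \ref{lem:Li}, restricted to the sub-root-system attached to $J$, one recovers exactly the same product, so the parabolic subalgebra has rank at most $a_{2k}\prod_i (m_i+1)!!$. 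Combined with the surjection $\psi$, this forces $\psi$ to be an isomorphism.

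The main obstacle is the bookkeeping for the $\ddC_k$-component, since one must show that the restricted normal form really does produce only the elements accounted for by $\Br(\ddC_k)$ and not additional ``cross-terms'' arising from the interaction with the remaining generators. The cleanest way to handle this is to compose with $\phi$ from Theorem \ref{thm:main} and observe that the image of the $\ddC_k$-parabolic lies in a distinguished copy of $\SBr(\ddA_{2k-1})$ inside $\SBr(\ddA_{2n-1})$ (obtained by leaving the outer $n-k$ pairs of strands vertical), so that an inductive use of Theorem \ref{thm:main} together with Corollary \ref{cor:3.4} supplies the required lower bound on the rank. Alternatively, one may invoke the cellular datum of Theorem \ref{th:cellular}, whose construction restricts compatibly to the generators indexed by $J$ and so transfers cellularity, and hence the rank count, to the parabolic subalgebra directly.
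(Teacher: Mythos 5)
Your proposal is correct and follows essentially the same route as the paper: a surjection coming from the presentation (automatic because every defining relation is local to at most two nodes of the diagram) combined with an injectivity argument obtained by identifying, via $\phi$, the image of the parabolic subalgebra with the span of the symmetric Brauer diagrams supported on the strands attached to $J$ and counting them. The paper merely organizes this differently, reducing by induction on $n-|J|$ and by components to the two maximal parabolics $J=\{1,\ldots,n-1\}$ and $J=\{0,\ldots,n-2\}$ and stating the diagram bijections there --- which is precisely the ``cleanest way'' alternative you settle on, and rightly so, since your first rank-counting route (restricting the normal form of Theorem \ref{th:uniqueform}) would need extra work to rule out cross-terms, as you yourself note.
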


\begin{proof}
In view of induction on $n-|J|$ and restriction to connected components of
$J$, it suffices to prove the result for $J = \{1,\ldots,n-1\}$ and for
$J=\{0,\ldots,n-2\}$. In the former case, the type is $\ddA_{n-1}$ and the
statement follows from the observation that the symmetric diagrams without
strands crossing the vertical line through the middle of the segments
connecting the dots $(n,1)$ and $(n+1,1)$ are equal in number to the Brauer
diagrams on the $2n$ nodes (realized to the left
of the vertical line). In the latter case, the type is $\ddC_{n-1}$ and the
statement follows from the observation that the symmetric diagrams with
vertical strands from $(1,1)$ to $(1,0)$ and from $(2n,1)$ to $(2n,0)$ are
equal in number to the symmetric diagrams related to $\BrM(\ddC_{n-1})$.
\end{proof}

The reader may have wondered why the study of symmetric diagrams was
restricted to type $\ddA_m$ for $m$ odd. The answer is that, if $m=2n$ is
even, each symmetric diagram has a fixed vertical strand from the dot
$(n,1)$ to $(n,0)$. The removal of this strand leads to an isomorphism of
the algebra of the symmetric diagrams with $\SBr(\ddA_{2n-1})$, and so this
construction provides no new algebra. This is remarkable in that the root
system obtained by projecting $\Phi$ onto the $\sigma$-fixed subspace of the
reflection representation, as in Definition \ref{df:fp}, leads to a root
system of type $\ddB_n$ instead of $\ddC_n$.

On the other hand, the presentation by generators and relations given in
Definition \ref{0.1} suggests, at least when $\delta = 1$, the definition of
the Brauer algebra of type $\ddB_n$ as for $\ddC_n$, but with the roles of
$0$ and $1$ reversed in defining relations
(\ref{0.1.11})--(\ref{0.1.18}). The dimensions for the Brauer algebras
$\Br(\ddB_n,R,1)$ with $n\le5$ thus obtained were found to be

\np
\begin{center}
\begin{tabular}{c|c|c|c|c|c}
   $n$&  1 & 2 & 3 & 4& 5   \\
  \hline
  $a_n$& 3 & 25& 273 &3801 & 66315 \\
\end{tabular}
\end{center}

\np
It is likely that these algebras emerge with a construction similar to the
one given for $\ddC_n$ but with $\ddA_{2n-1}$ replaced by $\ddD_{n+1}$ and
$\sigma$ by the diagram automorphism interchanging the two short end nodes.
This will be the subject of further investigation.

At the time of writing of this paper, Chen \cite{ZhiChen} presented a
definition of a generalized Brauer algebra of type $\ddI_2(m)$.  For $m=4$,
this type coincides with $\ddC_2$, but Chen's algebra has dimension $2m+m^2 =
24$, whereas our $\Br(\ddC_2)$ has dimension 25.

\end{section}

\begin{section}{Acknowledgments}
The authors thank David Wales for his valuable comments and helpful suggestions during the preparation of this manuscript.
Max Horn performed computations in GAP to explore the first few algebras
of the series and their actions on admissible sets. We are very grateful for
his help and enthusiastic support. 
\end{section}


\begin{thebibliography}{99}
\bibitem{Bigelow2001}
S. Bigelow, Braid groups are linear,
Journal of the American Mathematical Society, {\bf 14} (2001), 471--486.

\bibitem{Brauer1937}
R. Brauer, 
On algebras which are connected with the semisimple continous groups,
Annals of Mathematics, {\bf 38} (1937), 857--872.


\bibitem{ZhiChen}
Z. Chen,
Algebras associated with pseudo reflection groups: A generalization of
Brauer algebras,
\url{arXiv:1003.5280v1}, March 2010.



\bibitem{CFW2008}A.M.~Cohen, B.~Frenk and D.B.~Wales, Brauer algebras
of simply laced type, Israel Journal of Mathematics, {\bf 173} (2009)
335--365.


\bibitem{CGW2006}A.M.~Cohen, D.A.H.~Gijsbers and D.B.~Wales,
The BMW Algebras of type {\rm D}${}_n$,
\url{arXiv:0704.2743}, April 2007.

\bibitem{Crisp1996}
J. Crisp, Injective maps between Artin groups,
in Geometric Group Theory Down Under,
Lamberra 1996 (J. Cossey, C.F. Miller III, W.D. Neumann and M.Shapiro, eds.)
De Gruyter, Berlin, 1999, 119--137.

\bibitem{Dieck2003}
T. tom Dieck, Quantum groups and knot algebra, Lecture notes, May 4, 2004.

\bibitem{G2007}
M. Geck, Hecke algebras of finite type are cellular, Inventiones Mathematicae,
{\bf 169} (2007), 501--517.


\bibitem{Gra}
J.~J.~Graham, Modular representations
of Hecke algebras and related algebras, Ph.~D.~thesis, University of
Sydney, 1995.


\bibitem{GL1996}
J.J. Graham and G.I. Lehrer, Cellular algebras, Inventiones Mathematicae
{\bf 123} (1996),
1--44.

\bibitem{Muehl92}
{B. M\"uhlherr}, {Coxeter groups in Coxeter groups,}
pp.~277--287 in {Finite Geometry and Combinatorics (Deinze 1992).} London
Math.~Soc.~Lecture Note Series {\bf 191}, Cambridge University Press,
Cambridge, 1993.

\end{thebibliography}
\end{document}